\definecolor{red}{rgb}{1,0.1,0.1}
\definecolor{blue}{rgb}{0.1,0.1,1}
\definecolor{vb}{RGB}{160,32,240}
\numberwithin{equation}{section}
\newtheorem{theorem}{Theorem}[section]
\newtheorem{corollary}[theorem]{Corollary}
\newtheorem{definition}[theorem]{Definition}
\newtheorem{lemma}[theorem]{Lemma}
\newtheorem{proposition}[theorem]{Proposition}
\newtheorem{remark}[theorem]{Remark}
\newcommand{\RR}{\mathbb{R}}
\newcommand{\Xx}{\mathcal{X}}
\newcommand{\Ff}{\mathcal{F}}
\begin{document}
\title[Weighted $p(\cdot)$-Poincar\'e and Sobolev inequality]{Weighted $%
p(\cdot )$-Poincar\'{e} and Sobolev inequalities for vector fields
satisfiying H\"{o}rmander's condition and applications}
\author[L. A. Vallejos]{Lucas A. Vallejos}
\address{L. A. Vallejos\\
FaMAF \\
Universidad Nacional de C\'ordoba \\
CIEM (CONICET) \\
5000 C\'ordoba, Argentina}
\email{lucas.vallejos@unc.edu.ar}
\author[R.~E.~Vidal]{Ra\'ul E. Vidal}
\address{R.~E.~Vidal \\
FaMAF \\
Universidad Nacional de C\'ordoba \\
CIEM (CONICET) \\
5000 C\'ordoba, Argentina}
\email{vidal@famaf.unc.edu.ar}
\thanks{ The authors are partially supported by CONICET and SECYT-UNC}
\keywords{Poincar\'e inequalities, Weighted variable Sobolev spaces,
Carnot-Carath\'eodory spaces, Carnot group, Degenerate $p(\cdot)$-Laplacian.%
\\
\indent2020 {\ Mathematics Subject Classification: 35J92, 35R03, 42B35. }}

\begin{abstract}
In this paper we will establish different weighted Poincar\'{e} inequalities
with variable exponents on Carnot-Carath\'{e}odory spaces or Carnot groups.
We will use different techniques to obtain these inequalities. For vector
fields satisfying H\"{o}rmander's condition in variable non-isotropic
Sobolev spaces, we consider a weight in the variable Muckenhoupt class $%
A_{p(\cdot ),p^{\ast }(\cdot )}$, where the exponent $p(\cdot )$ satisfies
appropriate hypotheses, and in this case we obtain the first order weighted
Poincar\'{e} inequalities with variable exponents. In the case of Carnot
groups we also set up the higher order weighted Poincar\'{e} inequalities
with variable exponents. For these results the crucial part is proving the
boundedness of the fractional integral operator on Lebesgue spaces with
weighted and variable exponents on spaces of homogeneous type. Moreover,
using other techniques, we extend some of these results when the exponent
satisfies a jump condition and the weight is in a smaller Muckenhoupt class.

Finally, we will use these weighted Poincar\'{e} inequalities to establish
the existence and uniqueness of a minimizer to the Dirichlet energy integral
for a problem involving a degenerate $p(\cdot )$-Laplacian with zero
boundary values in Carnot groups.
\end{abstract}

\maketitle


\section{Introduction and main results}

Let us consider $\mathcal{X}$ a Carnot-Carath\'{e}odory space, i.e. a
nonempty set with $X=(X_{1},\cdots ,X_{n_{1}})$ a family of infinitely
differentiable vector fields that satisfy the H\"{o}rmander's condition, see
Subsection \ref{sec2.1}. Poincar\'{e} type inequalities for vector fields
satisfying H\"{o}rmander's condition have been extensively studied since the
90s. A classic result is 
\begin{equation}\label{poincare}
\int_{\Omega }|f-f_{\Omega }|^{p}\,dy\leq C(\Omega )\int_{\Omega
}|Xf|^{p}\,dy,\qquad \qquad \text{for all }f\in W^{1,p}(\Omega ),
\end{equation}%
\begin{equation}\label{poincare'}
\int_{\Omega }|f|^{p}\,dy\leq C(\Omega )\int_{\Omega }|Xf|^{p}\,dy,\qquad
\qquad \text{for all }f\in W_{0}^{1,p}(\Omega ),
\end{equation}%
where $1\leq p<\infty $ and $\Omega \subset \mathcal{X}$ is a bounded open
set. If $\Omega $ is a ball of radius $r$ in the Carnot-Carath\'{e}odory
metric, then $C(\Omega )=Cr^{p}$, see \cite{J}, \cite{CDG} and \cite[Theorem
2.1]{DGP}.

Throughout the article we say that $\Omega \subset \mathcal{X}$ is a domain
if it is open, bounded and connected.

In \cite{FLW}, \cite{L} and \cite{LW} the authors prove different weighted
Poincar\'{e} inequalities for vector fields satisfying H\"{o}rmander's
condition, to obtain their results they prove different representation
formulas. Let $K$ be a compact subset of a domain $\Omega$. Let $\omega_1\in A_p$  and let $\omega_2$ a doubling weight, 
that they satisfying a appropriate balance condition. For $x\in K$ and $B=B(x,r)$, with $r<r_0$  
\begin{equation*}
\left(\frac1{\omega_2(B)}\int_{B}|f(y)-f_{B }|^{q}\omega_2(y)dy\right)^{1/q}\leq Cr\left(\frac1{\omega_1(B)}\int_{\Omega
}|Xf|^{p}\omega_1(y)dy\right)^{1/p},
\end{equation*}
for any Lipschitz continuous function f, where $C=C(\Omega,K,\omega_1,\omega_2)$ and $f_{B }=\frac1{\omega_2(B)}\int_B f\omega_2dx$.

In \cite{LLT}, in the case Carnot-Carath\'{e}odory spaces
using these representation
formulas and extrapolation results, has been proved                                
$p(\cdot )$ Poincar\'{e} inequalities for variable  Sobolev spaces. 
Let $\Omega$ be a weak Boman chain domain and let $p(\cdot )\in \mathcal{P}(\Omega )$ be variable exponent such that $1\leq p_- \leq p_+\leq Q$ 
suppose that the maximal operator $M$ is bounded on $L^{(p^*(\cdot)/Q')'}$. Then for any Lipschitz continuous function f
\begin{equation}\label{chinos}
\lVert (f-f_{\Omega })\rVert_{L^{p^{\ast }(\cdot )}(\Omega )}\leq
C\lVert Xf\rVert_{L^{p(\cdot )}(\Omega )},
\end{equation}
here $p^*(x)=\frac{Qp(x)}{Q-p(x)}$, $Q$ is the homogeneous dimension and $C=C(\Omega,p(\cdot))$.

Furthermore, in \cite{LLT}, for Carnot groups, the authors extend the result \ref{chinos} obtaining higher order Poincar\'e and Sobolev inequalities with variable exponent.

On the other hand, recently in \cite{BF} the authors consider
exponents that satisfy a jump condition and also prove a $p(\cdot )$ Poincar%
\'{e} inequality for variable exponent Sobolev spaces. In these works,
different hypotheses on the domain and the variable exponent are considered, 
generalizing the result given in \cite{LLT}, equation \ref{chinos}.
Moreover, in \cite{BF} the authors establish the existence and uniqueness of
a minimizer to the Dirichlet energy integral for the problem 

\begin{equation*}
\left\{ 
\begin{array}{rclc}
-\text{div}\left(|Xu(x)|^{p(x)-2}Xu(x)\right) & = & 
0,\qquad & x\in \Omega , \\[10pt] 
u(x) & = & v(x), & x\in \partial \Omega .%
\end{array}%
\right.,  
\end{equation*}
where $v:\partial \Omega \to \RR$ be a continuous function and  $\text{div}\left(|Xu(x)|^{p(x)-2}Xu(x)\right)$ is $p(\cdot )$-Laplacian in Carnot groups.

In this paper we prove weighted Poincar\'{e} and Sobolev inequalities with variable
exponents in non-isotropic Sobolev spaces associated to the vector fieds
satisfying H\"{o}rmander's condition on domains $\Omega $. 
In this way we extend some of the
results given in \cite{LLT} and \cite{BF}.




We now state our results. We denote by $Q$ the homogeneous dimension of $%
\mathcal{X}$ and $A_{p(\cdot ),p^{\ast }(\cdot )}$ is the weighted variable
Muckenhoupt class defined on $\mathcal{X}$. The definition of $M$-pair is
given in Subsection \ref{sec2.3}. 

\begin{theorem}
\label{prin} Let $(\mathcal{X},d,dx)$ be a Carnot-Carath\'{e}odory space, $%
\Omega $ be a weak Boman chain domain, and $p(\cdot )\in \mathcal{P}(\Omega )
$ with $1<p^{-}\leq p^{+}<Q$. If $\omega \in A_{p(\cdot ),p^{\ast }(\cdot )}$
is such that $(p^{*}(\cdot)/Q', \omega^{Q'})$ is a $M$-pair,
then 
\begin{equation}
\lVert (f-f_{\Omega })\rVert _{L_{\omega}^{p^{\ast }(\cdot )}(\Omega )}\leq
C\lVert Xf\rVert _{L_{\omega}^{p(\cdot )}(\Omega )}
\end{equation}
for all Lipschitz continuous function $f$.

Moreover, if the maximal operator is bounded on $L_{\omega ^{-Q^{\prime
}}}^{(p^{\ast }(\cdot )/Q^{\prime })^{\prime }}$, then the theorem holds for 
$p^{-}=1$.
\end{theorem}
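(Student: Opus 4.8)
The plan is to reduce the inequality to a mapping property of an order-one fractional integral operator on the space of homogeneous type $(\mathcal{X},d,dx)$, and then to pass from a classical fixed-exponent weighted estimate to the variable-exponent weighted one by an off-diagonal Rubio de Francia extrapolation. Since $\Omega$ is a weak Boman chain domain, the starting point is the pointwise representation formula underlying \cite{FLW,L,LW}: there is $C=C(\Omega)$ such that, for every Lipschitz $f$ and a.e.\ $x\in\Omega$,
\[
|f(x)-f_\Omega|\le C\int_\Omega\frac{d(x,y)}{|B(x,d(x,y))|}\,|Xf(y)|\,dy=:C\,I_1\!\left(|Xf|\chi_\Omega\right)(x),
\]
where $I_1$ is the fractional integral of order one adapted to $(\mathcal{X},d,dx)$. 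Hence the theorem follows once we establish the boundedness
\[
\|I_1 g\|_{L_\omega^{p^{\ast}(\cdot)}(\mathcal{X})}\le C\,\|g\|_{L_\omega^{p(\cdot)}(\mathcal{X})},\qquad \operatorname{supp} g\subset\Omega .
\]

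For the fixed-exponent input we use that on a space of homogeneous type of homogeneous dimension $Q$ the Muckenhoupt--Wheeden inequality holds: whenever $1<p_0\le q_0<\infty$ with $\tfrac1{p_0}-\tfrac1{q_0}=\tfrac1Q$ and $v\in A_{p_0,q_0}$ one has $\|I_1 g\|_{L^{q_0}(v^{q_0})}\le C\|g\|_{L^{p_0}(v^{p_0})}$, and at the endpoint $(p_0,q_0)=(1,Q')$ the weak-type bound $I_1\colon L^1(v)\to L^{Q',\infty}(v^{Q'})$ holds for $v\in A_{1,Q'}$. Because $\tfrac1{p(x)}-\tfrac1{p^{\ast}(x)}=\tfrac1Q$ by the definition of $p^{\ast}$, we then invoke off-diagonal extrapolation in the weighted variable setting. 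Rescaling by $Q'$ turns the target into a statement about the Hardy--Littlewood maximal operator $M$: the hypothesis that $(p^{\ast}(\cdot)/Q',\omega^{Q'})$ is an $M$-pair, i.e.\ that $M$ is bounded both on $L_{\omega^{Q'}}^{p^{\ast}(\cdot)/Q'}$ and on its associate space $L_{\omega^{-Q'}}^{(p^{\ast}(\cdot)/Q')'}$, together with $\omega\in A_{p(\cdot),p^{\ast}(\cdot)}$, is exactly what the Rubio de Francia iteration scheme requires in order to upgrade the fixed-exponent estimates above to $\|I_1 g\|_{L_\omega^{p^{\ast}(\cdot)}}\le C\|g\|_{L_\omega^{p(\cdot)}}$. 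Combined with the representation formula, this proves the case $1<p^-\le p^+<Q$.

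For the endpoint $p^-=1$ the interior exponent may take the value $1$, so the two-sided $M$-pair condition is no longer available; instead we extrapolate from the weak-type endpoint $(p_0,q_0)=(1,Q')$, where the iteration consumes the maximal operator only on the single associate space $L_{\omega^{-Q'}}^{(p^{\ast}(\cdot)/Q')'}$, which is precisely the stated hypothesis. The technical heart — and the step I expect to be the main obstacle — is the boundedness of $I_1$ on weighted variable Lebesgue spaces over $(\mathcal{X},d,dx)$: one has to run the off-diagonal extrapolation with the correct bookkeeping of the rescaling $p^{\ast}(\cdot)\mapsto p^{\ast}(\cdot)/Q'$ and of the weight $\omega\mapsto\omega^{Q'}$, check the stability of the class $A_{p(\cdot),p^{\ast}(\cdot)}$ under the relevant operations, and control $I_1$ by the fractional and ordinary maximal operators through Hedberg-type pointwise bounds and good-$\lambda$ inequalities with $A_\infty$ weights. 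Once this boundedness is in hand, the rest is the routine combination described above.
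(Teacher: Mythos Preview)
For the case $p^{-}>1$ your sketch matches the paper's route: representation formula plus the weighted variable-exponent boundedness of $I_1$ obtained by off-diagonal extrapolation from the constant-exponent $A_{p_0,q_0}$ estimate. One small imprecision: the representation formulas in \cite{FLW,L,LW} (the one the paper quotes is Theorem~\ref{RepresentationFormulaFLW}) give $|f(x)-f_{B_0}|\le C\,I_1(|Xf|)(x)$ with the \emph{central ball} average $f_{B_0}$, not $f_\Omega$. The paper inserts an extra elementary step, using H\"older's inequality together with $\omega\in L^{p^{\ast}(\cdot)}_{\mathrm{loc}}$ and $\omega^{-1}\in L^{(p^{\ast}(\cdot))'}_{\mathrm{loc}}$, to show $\|f-f_\Omega\|_{L_\omega^{p^{\ast}(\cdot)}(\Omega)}\le C(\Omega,\omega)\|f-f_{B_0}\|_{L_\omega^{p^{\ast}(\cdot)}(\Omega)}$; you should not skip this.

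For $p^{-}=1$ there is a genuine gap. You propose to extrapolate from the \emph{weak}-type endpoint bound $I_1\colon L^1(v)\to L^{Q',\infty}(v^{Q'})$ for $v\in A_{1,Q'}$. But the extrapolation theorem used in the paper (Theorem~\ref{extrapolation}) requires a \emph{strong}-type input even when $p_0=1$; the one-sided hypothesis on $M$ acting on $L^{(p^{\ast}(\cdot)/Q')'}_{\omega^{-Q'}}$ replaces the $M$-pair condition, not the strength of the input inequality. The paper resolves this not by weak-type extrapolation but by first proving, for every constant $1\le p<Q$ and $\omega_0\in A_{p,p^{\ast}}$, the \emph{strong} weighted Poincar\'e inequality $\|f-f_\Omega\|_{L_{\omega_0}^{p^{\ast}}(\Omega)}\le C\|Xf\|_{L_{\omega_0}^{p}(\Omega)}$ (Lemma~\ref{Poincaré_p=1}). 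That lemma is obtained via a Maz'ya--Trudinger-type truncation: one sets $f_j=\min\{2^{j+1},\max\{2^j,|f-f_\Omega|\}\}$, uses the representation formula to get $2^{j}\lesssim I_1(|Xf_j|)$ on level sets, and sums the weak-type estimate for $I_1$ (Corollary~\ref{I_alpha_bebil}) over $j$ to recover the strong inequality. Only then is the extrapolation theorem applied, with the family $\mathcal F=\{(\,|f-f_\Omega|\,,\,|Xf|\,)\}$ at $(p_0,q_0)=(1,Q')$. Your sketch misses this truncation step, and without it the weak endpoint information does not feed into Theorem~\ref{extrapolation}.
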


For Carnot groups, using the boundedness of the fractional integral operator
on Lebesgue spaces with weight and variable exponent on spaces of
homogeneous type, we obtain the following theorems.

\begin{theorem}
\label{Poincare} Let $\Omega $ be a domain in a Carnot group $\mathbb{G}$
with homogeneous dimension $Q$. Suppose that $m$ and $j$ are integers with $%
0\leq j<m$ and $m-j\leq Q$, let $p(\cdot )\in \mathcal{P}(\mathbb{G})$
satisfy $1<p^{-}\leq p^{+}<\frac{Q}{m-j}$ and let $\sigma =(Q/m-j)^{\prime }$%
. If $\omega \in A_{p(\cdot ),p_{m,j}^{\ast }(\cdot )}$ is such that $%
(p_{m,j}^{\ast }(\cdot )/\sigma ,\omega ^{\sigma })$ is a $M$-pair, with $%
p_{m,j}^{\ast }(x)=\frac{Qp(x)}{Q-(m-j)p(x)}$, then 
\begin{equation}
\Vert X^{j}f\Vert _{L_{\omega }^{p_{m,j}^{\ast }(\cdot )}(\Omega )}\leq
C\Vert X^{m}(f)\Vert _{L_{\omega }^{p(\cdot )}(\Omega )}
\label{Sobolev_embedding}
\end{equation}%
for all $f\in W_{\omega ,0}^{m,p(\cdot )}(\Omega )$. %
%
%
%
%
\end{theorem}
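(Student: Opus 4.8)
The plan is to reduce the higher-order weighted Poincaré–Sobolev inequality in Carnot groups to a pointwise representation formula together with the boundedness of a fractional integral operator on weighted variable Lebesgue spaces. First I would recall, for $f\in W^{m,p(\cdot)}_{\omega,0}(\Omega)$, the standard subelliptic representation formula: extending $f$ by zero outside $\Omega$, one has a pointwise bound of the form
\begin{equation*}
|X^{j}f(x)|\leq C\,I_{m-j}\bigl(|X^{m}f|\bigr)(x),\qquad x\in\Omega,
\end{equation*}
where $I_{m-j}$ is the fractional integral operator of order $m-j$ on the space of homogeneous type $(\mathbb{G},d,dx)$, i.e. $I_{\alpha}g(x)=\int g(y)\,d(x,y)^{\alpha}|B(x,d(x,y))|^{-1}\,dy$. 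Such formulas are exactly the ones used in \cite{LLT} for the unweighted higher-order case and in \cite{FLW,L,LW} for first order; the homogeneous dimension $Q$ controls the kernel decay and $m-j\leq Q$ guarantees the integral converges. Since the estimate is pointwise, applying the $L^{p^{\ast}_{m,j}(\cdot)}_{\omega}$ norm to both sides gives
\begin{equation*}
\Vert X^{j}f\Vert_{L^{p^{\ast}_{m,j}(\cdot)}_{\omega}(\Omega)}\leq C\,\bigl\Vert I_{m-j}(|X^{m}f|)\bigr\Vert_{L^{p^{\ast}_{m,j}(\cdot)}_{\omega}(\Omega)}.
\end{equation*}

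Next I would invoke the weighted variable-exponent mapping property of $I_{m-j}$, which is the genuinely new analytic ingredient announced in the introduction: under the hypothesis $\omega\in A_{p(\cdot),p^{\ast}_{m,j}(\cdot)}$ with $(p^{\ast}_{m,j}(\cdot)/\sigma,\omega^{\sigma})$ an $M$-pair and $1<p^{-}\leq p^{+}<Q/(m-j)$, the operator
\begin{equation*}
I_{m-j}\colon L^{p(\cdot)}_{\omega}(\mathbb{G})\longrightarrow L^{p^{\ast}_{m,j}(\cdot)}_{\omega}(\mathbb{G})
\end{equation*}
is bounded, with the Sobolev-type relation $\tfrac{1}{p(x)}-\tfrac{1}{p^{\ast}_{m,j}(x)}=\tfrac{m-j}{Q}$ dictating the choice of target exponent. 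This is the theorem proved earlier in the paper (the boundedness of the fractional integral on Lebesgue spaces with weighted and variable exponents on spaces of homogeneous type); I would simply cite it. Combining the two displays and extending $|X^{m}f|$ by zero to all of $\mathbb{G}$ (legitimate since $f\in W^{m,p(\cdot)}_{\omega,0}$), we obtain
\begin{equation*}
\Vert X^{j}f\Vert_{L^{p^{\ast}_{m,j}(\cdot)}_{\omega}(\Omega)}\leq C\,\Vert X^{m}f\Vert_{L^{p(\cdot)}_{\omega}(\mathbb{G})}=C\,\Vert X^{m}f\Vert_{L^{p(\cdot)}_{\omega}(\Omega)},
\end{equation*}
which is \eqref{Sobolev_embedding}.

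The main obstacle — and the step that deserves the most care — is verifying that the hypotheses are self-consistent and line up with what the fractional-integral theorem actually requires: one must check that $p^{\ast}_{m,j}(\cdot)$ defined by $p^{\ast}_{m,j}(x)=Qp(x)/(Q-(m-j)p(x))$ indeed belongs to $\mathcal{P}(\mathbb{G})$ (it does, since $p^{+}<Q/(m-j)$ keeps the denominator bounded away from $0$), that $\sigma=(Q/(m-j))'$ is the correct conjugate parameter so that $(p^{\ast}_{m,j}(\cdot)/\sigma,\omega^{\sigma})$ being an $M$-pair is precisely the condition under which $M$, hence $I_{m-j}$, is controlled, and that the $A_{p(\cdot),p^{\ast}_{m,j}(\cdot)}$ condition transfers to the weighted boundedness via the extrapolation/duality arguments developed earlier. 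A secondary point is justifying the representation formula for the full Sobolev class $W^{m,p(\cdot)}_{\omega,0}(\Omega)$ rather than merely for smooth functions: one approximates $f$ by functions in $C^{\infty}_{0}(\Omega)$ in the $W^{m,p(\cdot)}_{\omega}$-norm (density, which holds under the $M$-pair hypothesis because it guarantees density of smooth functions in the weighted variable Sobolev space), proves the inequality for each approximant, and passes to the limit using the boundedness of $I_{m-j}$ to control both sides simultaneously. Modulo these bookkeeping checks, the proof is a short two-line combination once the fractional integral theorem is in hand.
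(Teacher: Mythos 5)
Your proposal is correct and follows essentially the same route as the paper: the pointwise representation formula $|X^{j}f(x)|\leq C\,I_{m-j}(|X^{m}f|)(x)$ for functions with zero boundary values (the Lu--Wheeden formula, Theorem \ref{reprG}) combined with the weighted variable-exponent boundedness of the fractional integral (Theorem \ref{I_alpha_bounded-weight} with $\alpha=m-j$). The extra bookkeeping you flag (consistency of $p^{\ast}_{m,j}(\cdot)$, $\sigma$, and the approximation from $C^{\infty}_{0}(\Omega)$) is sound and, if anything, more explicit than the paper's one-line argument.
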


\begin{theorem}
\label{prin2} Assume $\Omega $ is a weak Boman chain domain in a Carnot
group $\mathbb{G}$ with a central ball $B$ and homogeneous dimension $Q$.
Let $i$, $j$ and $m$ be integers with $0\leq j<i\leq m$ and $i-j\leq Q$. Let 
$p(\cdot )\in \mathcal{P}(\Omega )$ satisfy $1<p^{-}\leq p^{+}<\frac{Q}{i-j}$
and let $\sigma =(Q/i-j)^{\prime }$ . If $\omega \in A_{p(\cdot
),p_{i,j}^{\ast }(\cdot )}$ is such that $(p_{i,j}^{\ast }(\cdot )/\sigma
,\omega ^{\sigma })$ is a $M$-pair, with $p_{i,j}^{\ast }(x)=\frac{Qp(x)}{%
Q-(i-j)p(x)}$, then for every $f\in W_{\omega }^{m,p(\cdot )}(\Omega )$
there exists a polynomial $P_{m}\in \mathcal{P}_{m}$ such that 
\begin{equation*}
\Vert X^{j}(f-P_{m})\Vert _{L_{\omega }^{p_{i,j}^{\ast }(\cdot )}(\Omega
)}\leq C\Vert X^{m}(f)\Vert _{L_{\omega }^{p(\cdot )}(\Omega )}.
\end{equation*}
\end{theorem}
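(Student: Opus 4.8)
The plan is to combine the higher-order weighted Poincaré--Sobolev inequality of Theorem \ref{Poincare}, which is available on balls (or on domains with zero boundary values), with a Boman chain decomposition of $\Omega$ to pass from local estimates to a global one, after subtracting a suitable polynomial. The starting point is the local representation: on the central ball $B$, and on each ball $B'$ of the Boman chain, one has a Sobolev--Poincaré estimate of the form $\Vert X^{j}(f-P_{B'})\Vert_{L^{p_{i,j}^{\ast}(\cdot)}_{\omega}(B')}\le C\Vert X^{m}f\Vert_{L^{p(\cdot)}_{\omega}(\lambda B')}$, where $P_{B'}\in\mathcal{P}_{m}$ is a polynomial approximating $f$ on $B'$ in the sense of the Taylor-type projections adapted to the stratified group (these are exactly the polynomials used in \cite{LLT} for the unweighted case, constructed from the group exponential coordinates and of homogeneous degree $<m$). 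Such a ball-wise estimate follows by applying Theorem \ref{Poincare} to $f-P_{B'}$ after checking that $X^{m}(f-P_{B'})=X^{m}f$ (the polynomial is killed by $m$ derivatives) and that the remaining lower-order terms $X^{j}(f-P_{B'})$ have vanishing averages, so that the zero-boundary-value version on a ball can be invoked, or, alternatively, by directly quoting the ball case of the weighted higher-order Poincaré inequality.

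Next I would run the Boman chain argument. Fix the central ball $B$ and set $P_{m}:=P_{B}$. For each ball $B_{i}$ in the chain connecting a generic point to $B$, one estimates the difference of consecutive polynomials $P_{B_{i}}-P_{B_{i+1}}$ on the overlap $B_{i}\cap B_{i+1}$ (which has measure comparable to that of $B_{i}$ by the chain condition) in terms of $X^{m}f$ on a dilate, using that a polynomial in $\mathcal{P}_{m}$ is controlled on a ball by any of its weighted $L^{q}$ norms there (equivalence of norms on the finite-dimensional space $\mathcal{P}_{m}$, with constants depending on the ball only through scaling), and telescoping. Summing the local estimates over the chain with the bounded-overlap property, and using the doubling of $dx$ together with $\omega\in A_{p(\cdot),p_{i,j}^{\ast}(\cdot)}$ to control the weighted measures of dilated balls, yields
\begin{equation*}
\Vert X^{j}(f-P_{m})\Vert_{L^{p_{i,j}^{\ast}(\cdot)}_{\omega}(\Omega)}\le C\Big(\sum_{B'}\Vert X^{m}f\Vert^{?}_{L^{p(\cdot)}_{\omega}(\lambda B')}\Big)\le C\Vert X^{m}f\Vert_{L^{p(\cdot)}_{\omega}(\Omega)},
\end{equation*}
where the passage from the sum of local norms to the global norm uses the variable-exponent summation lemma for $M$-pairs (the same device that made \eqref{chinos} work, i.e. boundedness of $M$ on the dual scale, which here is encoded in the $M$-pair hypothesis on $(p_{i,j}^{\ast}(\cdot)/\sigma,\omega^{\sigma})$).

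The main obstacle I expect is the bookkeeping in the variable-exponent, weighted setting of the telescoping over the chain: unlike the constant-exponent case, one cannot simply add $L^{p}$ norms, so the polynomial-difference estimates must be phrased in a way compatible with the modular/norm interplay of $L^{p_{i,j}^{\ast}(\cdot)}_{\omega}$, and one must ensure the constants coming from norm-equivalence on $\mathcal{P}_{m}$ on a ball $B'$ scale correctly (picking up the right power of $r_{B'}$ to match the $(i-j)$-gain in Theorem \ref{Poincare}) and are absorbed by the $A_{p(\cdot),p_{i,j}^{\ast}(\cdot)}$ balance between $\omega$ on $B'$ at the two exponents. A secondary technical point is verifying that the group-polynomial projections $P_{B'}$ are well-defined and stable for merely $W^{m,p(\cdot)}_{\omega}$ functions; this is handled by density of Lipschitz functions and the fact that all estimates are quantitative and pass to the limit. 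Once these are in place, the theorem follows by choosing $P_{m}=P_{B}$ associated to the central ball.
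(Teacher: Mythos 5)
Your proposal misses the short route the paper actually takes, and the route you sketch has genuine gaps. The paper's proof is essentially one line: by the Lu--Wheeden higher-order representation formula on weak Boman chain domains (Theorem \ref{P}), there is a \emph{single} polynomial $P_{m}=P_{m}(B_{0},f)$ of homogeneous order less than $m$, attached to the central ball, with the pointwise bound $|X^{j}(f-P_{m})(x)|\leq C\, I_{i-j}\bigl(|X^{i}f|\bigr)(x)$ for a.e.\ $x\in\Omega$; the entire Boman-chain geometry and the polynomial subtraction are already encoded in this pointwise inequality. The theorem then follows immediately by applying the weighted variable-exponent bound for the fractional integral with $\alpha=i-j$ (Theorem \ref{I_alpha_bounded-weight}, itself obtained from the extrapolation Theorem \ref{extrapolation}), which is exactly where the hypotheses $\omega\in A_{p(\cdot),p_{i,j}^{\ast}(\cdot)}$ and the $M$-pair condition on $(p_{i,j}^{\ast}(\cdot)/\sigma,\omega^{\sigma})$ enter. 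No norm-level chaining is needed.

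Your plan instead tries to redo the chaining at the level of norms, and both pillars it rests on are unjustified. First, the ball-wise estimate $\Vert X^{j}(f-P_{B'})\Vert_{L_{\omega}^{p_{i,j}^{\ast}(\cdot)}(B')}\leq C\Vert X^{m}f\Vert_{L_{\omega}^{p(\cdot)}(\lambda B')}$ does not follow from Theorem \ref{Poincare}: that theorem applies to functions in $W_{\omega,0}^{m,p(\cdot)}$, i.e.\ with zero boundary values, and subtracting a polynomial so that lower-order averages vanish does not place $f-P_{B'}$ in that class; the fallback of ``directly quoting the ball case of the weighted higher-order Poincar\'e inequality'' is circular, since that local inequality with polynomial subtraction is precisely the content to be proved --- it is what the representation formula of Theorem \ref{P} supplies. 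Second, the telescoping of polynomial differences and the summation of local weighted variable-exponent norms over the chain --- which you yourself flag as the main obstacle --- is the genuinely hard step of your scheme and is left entirely unproven; there is no ``variable-exponent summation lemma for $M$-pairs'' available in the paper, and the $M$-pair hypothesis is used only through the extrapolation theorem to bound $I_{i-j}$, not to add up norms over a bounded-overlap covering. Until the local inequality and the chaining step are actually established (which would amount to reproving a weighted, variable-exponent version of the Lu--Wheeden machinery from scratch), the proposal is not a proof; the efficient argument is to quote Theorem \ref{P} and then apply Theorem \ref{I_alpha_bounded-weight}.
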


For the next Poincar\'e inequality we need that the variable exponent $%
p(\cdot)$ meets the jump condition, see Subsection \ref{sec2.2}.

\begin{theorem}
\label{Poincare2} Let $\Omega $ be a domain in a Carnot-Carath\'{e}odory
space $(\mathcal{X},d,dx)$. Let $p(\cdot )\in \mathcal{P}(\Omega )$, $%
1<p^{-}\leq p^{+}<Q$, such that satisfies the jump condition in $\Omega $
with $\delta >0$ and let $\omega $ be a weight such that $\omega \in
A_{p^{-},(p^{-})^{\ast }}$. Then 
\begin{equation*}
\lVert f\rVert _{L_{\omega }^{p(\cdot )}(\Omega )}\leq C\lVert Xf\rVert
_{L_{\omega }^{p(\cdot )}(\Omega )}
\end{equation*}%
for all $f\in W_{\omega ,0}^{1,p(\cdot )}(\Omega )$. The constant $C$
depends on $p$, $\Omega $, $\delta $, $\omega $ and $Q$.
\end{theorem}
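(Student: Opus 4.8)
The plan is to reduce the variable-exponent weighted Poincaré inequality to the constant-exponent weighted Poincaré inequality for the exponent $p^{-}$, exploiting the jump condition to control the passage from $p(\cdot)$ to $p^{-}$. First I would recall that, since $\omega\in A_{p^{-},(p^{-})^{\ast}}$, the classical weighted Poincaré inequality \eqref{poincare'} holds for $W^{1,p^{-}}_{\omega,0}(\Omega)$ with the gain coming from the Sobolev conjugate, i.e. for $f\in W^{1,p^{-}}_{\omega,0}(\Omega)$ one has $\lVert f\rVert_{L^{(p^{-})^{\ast}}_{\omega}(\Omega)}\le C\,\lVert Xf\rVert_{L^{p^{-}}_{\omega}(\Omega)}$ (this is the Hörmander-vector-field weighted inequality from \cite{FLW,L,LW} specialized to the constant exponent on the bounded domain $\Omega$). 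Note $\lVert f\rVert_{L^{p^{-}}_\omega(\Omega)}\le C\lVert f\rVert_{L^{(p^-)^*}_\omega(\Omega)}$ since $\Omega$ is bounded and $\omega$ is locally finite, so we in fact get a genuine Poincaré inequality $\lVert f\rVert_{L^{p^{-}}_\omega(\Omega)}\le C\lVert Xf\rVert_{L^{p^{-}}_\omega(\Omega)}$.

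Next I would use the jump condition: it partitions $\Omega$ (up to the structure encoded by $\delta>0$ in Subsection \ref{sec2.2}) so that on each relevant piece the oscillation of $p(\cdot)$ is controlled, and the global comparison between the modular $\int_\Omega |g|^{p(x)}\omega\,dx$ and the modular $\int_\Omega |g|^{p^-}\omega\,dx$ picks up only a constant depending on $\delta$, $\Omega$, $\omega$, $Q$, plus a term of the form $\omega(\Omega)$ or $|\Omega|$ times a constant, coming from the region where $p(x)$ is far from $p^-$. Concretely, on the set where $|g|\le 1$ we have $|g|^{p(x)}\ge |g|^{p^+}$, and on $|g|>1$ we have $|g|^{p(x)}\le |g|^{p^+}$; the jump condition is what lets us bound $|g|^{p^+}$-integrals by $|g|^{p^-}$-integrals plus harmless constants on each block of the decomposition. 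Translating modular inequalities into norm inequalities (using that $\Omega$ is bounded so all norms are comparable to their modular versions up to additive constants), I would obtain
\begin{equation*}
\lVert f\rVert_{L^{p(\cdot)}_\omega(\Omega)}\le C\Big(\lVert f\rVert_{L^{p^-}_\omega(\Omega)}+1\Big)\quad\text{and}\quad \lVert Xf\rVert_{L^{p^-}_\omega(\Omega)}\le C\Big(\lVert Xf\rVert_{L^{p(\cdot)}_\omega(\Omega)}+1\Big),
\end{equation*}
after a normalization $\lVert Xf\rVert_{L^{p(\cdot)}_\omega(\Omega)}=1$ that one is free to make by homogeneity of the desired inequality.

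Chaining these three estimates — from $\lVert f\rVert_{L^{p(\cdot)}_\omega}$ down to $\lVert f\rVert_{L^{p^-}_\omega}$, then the constant-exponent Poincaré inequality to $\lVert Xf\rVert_{L^{p^-}_\omega}$, then back up to $\lVert Xf\rVert_{L^{p(\cdot)}_\omega}$ — yields the claim, with the constant depending only on the stated data. The main obstacle I anticipate is the bookkeeping in the second step: making the modular-to-norm passage rigorous in the weighted variable-exponent setting, ensuring that the additive constants produced by the jump-condition decomposition are genuinely controlled by $\omega(\Omega)$ (finite because $\Omega$ is bounded and $\omega$ is a weight) rather than accumulating across infinitely many scales, and verifying that $f\in W^{1,p(\cdot)}_{\omega,0}(\Omega)$ actually lies in $W^{1,p^-}_{\omega,0}(\Omega)$ so that the constant-exponent inequality applies — this last inclusion again follows from boundedness of $\Omega$ together with $p^-\le p(\cdot)$, but the density/approximation argument defining the zero-boundary-value space needs to be compatible between the two scales, which is where I would be most careful.
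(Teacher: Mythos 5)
There is a genuine gap, and it sits exactly at the step you flag as ``bookkeeping'': the comparison $\lVert f\rVert_{L^{p(\cdot)}_\omega(\Omega)}\le C\bigl(\lVert f\rVert_{L^{p^-}_\omega(\Omega)}+1\bigr)$ is false. On a bounded domain the embedding between variable and constant exponent spaces goes the other way: since $p^-\le p(\cdot)$, one has $\lVert f\rVert_{L^{p^-}_\omega(\Omega)}\le C\lVert f\rVert_{L^{p(\cdot)}_\omega(\Omega)}$, and there is no modular trick that bounds an $L^{p(\cdot)}$ (or $L^{p^+}$) quantity by an $L^{p^-}$ quantity plus ``harmless constants'' --- a tall spike on a small set makes $\int|g|^{p^-}\omega^{p^-}$ small while $\int|g|^{p^+}\omega^{p^+}$ is arbitrarily large. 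The jump condition does not repair this: it is a purely local statement about the balls $B(x,\delta)$ and does not give $p^+\le (p^-)^*$ globally (the exponent can drift across $\Omega$ while oscillating little on each $\delta$-ball), so a global reduction to the single constant exponent $p^-$ cannot work. Moreover, you discard the one mechanism that can raise the exponent --- the Sobolev gain $p^-\mapsto (p^-)^*$ in the weighted inequality --- precisely at the moment it is needed, by downgrading $\lVert f\rVert_{L^{(p^-)^*}_\omega}\le C\lVert Xf\rVert_{L^{p^-}_\omega}$ to a plain Poincar\'e inequality at level $p^-$.

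The paper's proof shows how the gain and the jump condition must be combined: by density it suffices to take $f\in C_0^\infty(\Omega)$; cover the compact $\overline\Omega$ by finitely many balls $B_i=B(x_i,\delta)$, so that on each $B_i$ the jump condition (together with $p^+<Q$) gives $p^-_{B_i}\le p(x)\le p^+_{B_i}\le (p^-_{B_i})^*$. By Lemma \ref{propA}, $\omega\in A_{p^-,(p^-)^*}$ implies $\omega\in A_{p^-_{B_i},(p^-_{B_i})^*}$, so the \emph{constant-exponent weighted Sobolev--Poincar\'e inequality} applies on each $B_i$ with the pair $\bigl(p^-_{B_i},(p^-_{B_i})^*\bigr)$: the local $L^{p(\cdot)}_\omega$ norm is dominated (finite measure) by the $L^{(p^-_{B_i})^*}_\omega$ norm, which the Sobolev gain converts into $\lVert X\tilde f\rVert_{L^{p^-_{B_i}}_\omega(B_i)}\le C\lVert Xf\rVert_{L^{p(\cdot)}_\omega(\Omega)}$; the averages $\tilde f_{B_i}$ are handled separately via the classical unweighted Poincar\'e inequality \eqref{poincare'} and H\"older's inequality, using $\omega^{-1}\in L^{p'(\cdot)}_{\rm loc}$. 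Summing over the finitely many balls gives the theorem. Your outline (first paragraph and the inclusion $W^{1,p(\cdot)}_{\omega,0}\subset W^{1,p^-}_{\omega,0}$, and the estimate $\lVert Xf\rVert_{L^{p^-}_\omega}\le C\lVert Xf\rVert_{L^{p(\cdot)}_\omega}$) is fine as far as it goes, but without the localization to $\delta$-balls and without keeping the Sobolev conjugate on each ball, the chain cannot be closed.
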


\begin{remark}
The inequality in Theorem \ref{Poincare}, for the case $m=1$, implies the
inequality in Theorem \ref{Poincare2}, however both hypothesis are
different. On one hand in Theorem \ref{Poincare2} we have more variable
exponents $p(\cdot )$ because the jump condition does not necessarily imply
the continuity of $p(\cdot )$. In Theorem \ref{Poincare} the exponent $%
p(\cdot )$ has the property that $(p^{\ast }(\cdot )/\sigma ,\omega ^{\sigma
})$ is a $M$-pair, a condition enough for this to happen is that $p(\cdot )$
is locally Log-H\"{o}lder continuous. The last condition implies that $%
p(\cdot )$ is continuous, see Subsection \ref{sec2.3}. 
\end{remark}

Finally we will give an application to a problem involving a degenerate $%
p(\cdot )$-Laplacian in Carnot groups. For these results we will always
consider that the weight $\omega $ meets the hypotheses of Theorem \ref%
{Poincare} or Theorem \ref{Poincare2}. In Subsection \ref{sec2.2} we prove
that the aforementioned hypothesis guarantee that $W_{\omega ,0}^{1,p(\cdot
)}(\Omega )$ is a Banach space.

Assume $\Omega $ is a domain in a Carnot group $\mathbb{G}$ and $\omega >0$
is a weight in $\Omega $. Let $A:\Omega \rightarrow \mathbb{R}^{n_{1}\times
n_{1}}$ be a measurable function such that, for each $x\in \Omega $, $A(x)$
is a symmetric matrix of dimension $n_{1}\times n_{1}$ and for any $\xi \in 
\mathbb{R}^{n_{1}}$ there exist $\eta _{1},\,\eta _{2}>0$ such that 
\begin{equation}
\omega ^{2}(x)\eta _{1}|\xi |^{2}\leq \left\langle A(x)\xi ,\xi
\right\rangle \leq \omega ^{2}(x)\eta _{2}|\xi |^{2}\qquad \text{for all }%
\,x\in \mathbb{G}.  \label{A}
\end{equation}%
For $u\in W_{\omega }^{1,p(\cdot )}(\Omega )$ we define the degenerate $%
p(\cdot )-$Laplacian by 
\begin{equation}
\mathcal{L}_{p(\cdot ),A}u(x):=\text{div}\left( \left\langle
A(x)Xu(x),Xu(x)\right\rangle ^{\frac{p(x)-2}{2}}A(x)Xu(x)\right) .
\label{dege}
\end{equation}%
If $p(x)=2$ then $A(x)=I$ is an identity matrix and $\mathcal{L}_{2,I}$ is a
classical SubLaplacian in Carnot groups.

For $f\in L_{\omega ^{-1}}^{p^{\prime }(\cdot )}(\Omega )$, we consider the
following second order elliptic differential problem with Dirichlet boundary
conditions

\begin{equation}
\left\{ 
\begin{array}{rclc}
-\mathcal{L}_{p(\cdot ),A}u(x)+|u(x)|^{p(x)-2}u(x)\omega (x)^{p(x)} & = & 
f(x),\qquad & x\in \Omega , \\[10pt] 
u(x) & = & 0, & x\in \partial \Omega .%
\end{array}%
\right.  \label{1}
\end{equation}%
A function $u\in W_{\omega ,0}^{1,p(\cdot )}(\Omega )$ is a weak solution of
problem \eqref{1} if%
\begin{align}
& \int_{\Omega }\left\langle A(x)Xu(x),Xu(x)\right\rangle ^{\frac{p(x)-2}{2}%
}\left\langle A(x)Xu(x),Xv(x)\right\rangle \,dx \\
& \qquad \qquad \quad +\int_{\Omega }|u(x)|^{p(x)-2}u(x)v(x)\omega
(x)^{p(x)}\,dx=\int_{\Omega }f(x)v(x)\,dx,  \notag  \label{2}
\end{align}%
for all $v\in W_{\omega ,0}^{1,p(\cdot )}(\Omega )$.

Associated with this problem we have the following energy functional 
\begin{equation}
\mathcal{F}(u)\!:=\!\int_{\Omega }\!\frac{\left\langle
A(x)Xu(x),Xu(x)\right\rangle ^{\frac{p(x)}{2}}}{p(x)}dx\!+\!\int_{\Omega }\!%
\frac{|u(x)\omega (x)|^{p(x)}}{p(x)}dx\!-\!\int_{\Omega }\!f(x)u(x)dx.
\label{func}
\end{equation}

We will get the following characterization.

\begin{theorem}
\label{carac} Let $\Omega $ be a domain in a Carnot group $\mathbb{G}$ and $%
p(\cdot )\in \mathcal{P}(\Omega )$, $1<p^{-}\leq p^{+}<Q$. Let $\omega >0$
be a weight and $A(x)$ be a matrix that meets the hypothesis \eqref{A}. Then 
$u\in W_{w,0}^{1,p(\cdot )}(\Omega )$ minimizes the energy functional $%
\mathcal{F}$ defined in \eqref{func} if and only if 
\begin{align*}
& \int_{\Omega }\left\langle A(x)Xu(x),Xu(x)\right\rangle ^{\frac{p(x)-2}{2}%
}\left\langle A(x)Xu(x),Xv(x)\right\rangle \,dx \\
& \qquad \qquad \qquad +\int_{\Omega }|u(x)|^{p(x)-2}u(x)v(x)\omega
(x)^{p(x)}\,dx=\int_{\Omega }f(x)v(x)\,dx\geq 0,
\end{align*}%
for every $v\in W_{w,0}^{1,p(\cdot )}(\Omega )$.
\end{theorem}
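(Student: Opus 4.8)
The plan is to view $\mathcal{F}$ as a convex, Gâteaux-differentiable functional on the Banach space $W^{1,p(\cdot)}_{\omega,0}(\Omega)$ (whose Banach structure is established in Subsection \ref{sec2.2}) and to identify the displayed identity with the vanishing of its Gâteaux derivative; the asserted equivalence is then the standard fact that a convex Gâteaux-differentiable functional attains its global minimum precisely at its critical points.

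First I would verify that $\mathcal{F}$ is finite on $W^{1,p(\cdot)}_{\omega,0}(\Omega)$: by \eqref{A} one has $\langle A(x)Xu(x),Xu(x)\rangle^{p(x)/2}\le \eta_2^{p(x)/2}|\omega(x)Xu(x)|^{p(x)}$, which is integrable because $Xu\in L^{p(\cdot)}_{\omega}(\Omega)$; the term $\int_\Omega|\omega u|^{p(x)}\,dx$ is finite for the same reason, and $\int_\Omega fu\,dx$ is finite by Hölder's inequality in variable Lebesgue spaces since $f\in L^{p'(\cdot)}_{\omega^{-1}}(\Omega)$ and $u\in L^{p(\cdot)}_{\omega}(\Omega)$. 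Next, for fixed $u,v\in W^{1,p(\cdot)}_{\omega,0}(\Omega)$ I would show that $g(t):=\mathcal{F}(u+tv)$ is differentiable on $\mathbb{R}$ and that
\begin{align*}
g'(t)={}&\int_\Omega \langle A(x)X(u+tv),X(u+tv)\rangle^{\frac{p(x)-2}{2}}\langle A(x)X(u+tv),Xv\rangle\,dx\\
&+\int_\Omega|u+tv|^{p(x)-2}(u+tv)\,v\,\omega^{p(x)}\,dx-\int_\Omega fv\,dx,
\end{align*}
by differentiating the pointwise integrands and moving the limit inside the integral. This is the main technical point. Differentiating in $t$ the first integrand is legitimate pointwise because $p^{-}>1$; using that $\langle A(x)\cdot,\cdot\rangle$ is an inner product one gets, via its Cauchy--Schwarz inequality and \eqref{A}, that for $|t|\le 1$ the $t$-derivative of the first integrand is dominated by $\eta_2^{p(x)/2}\,\omega(x)^{p(x)}\big(|Xu|+|Xv|\big)^{p(x)-1}|Xv|$; a pointwise application of Young's inequality with the conjugate exponents $p(x),p'(x)$ (so that $(p(x)-1)p'(x)=p(x)$) bounds this by $\eta_2^{p(x)/2}\,\omega(x)^{p(x)}\big((|Xu|+|Xv|)^{p(x)}+|Xv|^{p(x)}\big)\in L^1(\Omega)$. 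An analogous estimate, using $p^{-}>1$ and $\big||u+tv|^{p(x)-2}(u+tv)v\big|\le(|u|+|v|)^{p(x)-1}|v|$, handles the second integrand. Dominated convergence then yields the formula for $g'$, so $\mathcal{F}$ is Gâteaux differentiable with $\langle\mathcal{F}'(u),v\rangle=g'(0)$ equal to the left-hand side of the displayed identity minus $\int_\Omega fv\,dx$.

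For the implication ``$u$ minimizes $\Rightarrow$ identity'': $g$ has a global minimum at $t=0$, hence $g'(0)=0$, which is exactly the displayed identity for arbitrary $v\in W^{1,p(\cdot)}_{\omega,0}(\Omega)$. For the converse I would use convexity of $\mathcal{F}$: for each fixed $x$, $\xi\mapsto\langle A(x)\xi,\xi\rangle^{1/2}$ is a norm on $\mathbb{R}^{n_1}$ (since $A(x)$ is symmetric and positive definite by \eqref{A}), hence convex, and its composition with the convex nondecreasing map $t\mapsto t^{p(x)}$ on $[0,\infty)$ is convex; likewise $s\mapsto|s|^{p(x)}$ is convex, and $u\mapsto\int_\Omega fu\,dx$ is linear, so $\mathcal{F}$ is convex on $W^{1,p(\cdot)}_{\omega,0}(\Omega)$. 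Convexity together with Gâteaux differentiability gives the subgradient inequality $\mathcal{F}(v)\ge\mathcal{F}(u)+\langle\mathcal{F}'(u),v-u\rangle$ for all $u,v$ (the secant slope $t\mapsto(\mathcal{F}(u+t(v-u))-\mathcal{F}(u))/t$ is nondecreasing, so its value at $t=1$ dominates its limit as $t\downarrow0$). Taking $w=v-u\in W^{1,p(\cdot)}_{\omega,0}(\Omega)$ and invoking the hypothesis $\langle\mathcal{F}'(u),w\rangle=0$ gives $\mathcal{F}(v)\ge\mathcal{F}(u)$ for every $v$, i.e. $u$ minimizes $\mathcal{F}$.

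The only genuinely delicate step is the dominated-convergence argument for differentiating under the integral sign: because the exponent depends on $x$ one cannot use a single global Hölder bound and must instead build the $L^1$ majorant through a pointwise Young inequality with the variable conjugate pair $p(x),p'(x)$, together with the Cauchy--Schwarz inequality for the form $\langle A(x)\cdot,\cdot\rangle$ to absorb the negative power $\frac{p(x)-2}{2}$ when $p(x)<2$. Everything else reduces to elementary convex analysis once the Banach-space structure of $W^{1,p(\cdot)}_{\omega,0}(\Omega)$ is available.
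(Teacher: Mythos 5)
Your proposal is correct and follows essentially the same route as the paper: differentiate $t\mapsto\mathcal{F}(u+tv)$ under the integral sign via dominated convergence (the paper builds the $L^{1}$ majorant with the mean value theorem and variable-exponent H\"older estimates, you with the Cauchy--Schwarz inequality for $\langle A(x)\cdot,\cdot\rangle$ and a pointwise Young inequality --- both work), and use convexity of the integrand for the converse implication. The only cosmetic difference is that you obtain $g'(0)=0$ directly and invoke plain convexity via the subgradient inequality, while the paper uses one-sided increments $0<t<1$ and strict convexity of the integrand, which is equivalent since $v$ ranges over a linear space.
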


Concerning problem \eqref{1}, using variational methods we shall prove the
following existence and uniqueness result.

\begin{theorem}
\label{ext-unc} Let $\Omega $ be a domain in a Carnot group $\mathbb{G}$ and 
$p(\cdot )\in \mathcal{P}(\Omega )$, $1<p^{-}\leq p^{+}<Q$. Let $\omega >0$
be a weight and suppose the matrix $A(x)$ fulfills the hypothesis \eqref{A}.
Assume also that either $\omega $ and $p(\cdot )$ meet the hypothesis of
Theorem \ref{Poincare} with $m=1$ or $\omega $ and $p(\cdot )$ meet the
hypothesis of Theorem \ref{Poincare2}. Then there exists a unique minimizer $%
u\in W_{w,0}^{1,p(\cdot )}(\Omega )$ of the energy functional $\mathcal{F}$
defined in \eqref{func}, that is 
\begin{equation*}
\mathcal{F}(u)=\min_{v\in W_{w,0}^{1,p(\cdot )}(\Omega )}\mathcal{F}(v).
\end{equation*}%
Moreover, $u$ is the unique weak solution of the problem \eqref{1}.
\end{theorem}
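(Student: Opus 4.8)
The plan is to use the direct method in the calculus of variations on the Banach space $V:=W_{\omega,0}^{1,p(\cdot)}(\Omega)$. First I would verify that $V$ is a reflexive, separable Banach space: separability and completeness have already been discussed in Subsection \ref{sec2.2}, and reflexivity follows since $1<p^-\le p^+<\infty$ together with the uniform convexity of variable exponent Lebesgue spaces in that range (this transfers to $V$ via the isometric embedding $f\mapsto(f\omega, Xf\,\omega)$ into a product of $L^{p(\cdot)}$ spaces). Next I would record the structural properties of $\mathcal{F}$ from \eqref{func}. Coercivity is the step where the weighted Poincar\'e inequality enters: by hypothesis $\omega$ and $p(\cdot)$ satisfy the assumptions of Theorem \ref{Poincare} with $m=1$ (or of Theorem \ref{Poincare2}), hence $\|f\|_{L_\omega^{p(\cdot)}(\Omega)}\le C\|Xf\|_{L_\omega^{p(\cdot)}(\Omega)}$ for all $f\in V$, so $\|Xf\|_{L_\omega^{p(\cdot)}}$ is an equivalent norm on $V$. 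Using the ellipticity \eqref{A}, the first two integrals in $\mathcal{F}(u)$ are bounded below by $\frac{1}{p^+}\big(\eta_1^{p^-/2}\wedge 1\big)$ times a power of $\|Xu\|_{L_\omega^{p(\cdot)}}$ (modulus-to-norm relations for variable exponents, using $p^-$ when the norm is large), while $\big|\int_\Omega f u\big|\le 2\|f\|_{L_{\omega^{-1}}^{p'(\cdot)}}\|u\omega\|_{L^{p(\cdot)}}\le C'\|Xu\|_{L_\omega^{p(\cdot)}}$ by H\"older's inequality in variable exponent spaces and Poincar\'e again; since $p^->1$, the leading term dominates and $\mathcal{F}(u)\to+\infty$ as $\|u\|_V\to\infty$.

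For weak lower semicontinuity I would argue that $\mathcal{F}$ is convex and strongly continuous, hence weakly lower semicontinuous. Convexity: the map $\xi\mapsto \langle A(x)\xi,\xi\rangle^{p(x)/2}$ is convex in $\xi\in\mathbb{R}^{n_1}$ for each fixed $x$ because $\xi\mapsto\langle A(x)\xi,\xi\rangle^{1/2}$ is a norm (by symmetry and positive definiteness of $A(x)$) and $t\mapsto t^{p(x)}$ is convex increasing on $[0,\infty)$; similarly $s\mapsto|s\omega(x)|^{p(x)}$ is convex in $s$; and $u\mapsto\int f u$ is linear. Strong continuity on $V$ follows from the growth bounds in \eqref{A} and continuity of the modular, plus the dominated convergence theorem along a subsequence. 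Therefore $\mathcal{F}$ is weakly lower semicontinuous. Taking a minimizing sequence $\{u_k\}\subset V$, coercivity gives boundedness, reflexivity gives a weakly convergent subsequence $u_k\rightharpoonup u$ in $V$, and weak lower semicontinuity gives $\mathcal{F}(u)\le\liminf\mathcal{F}(u_k)=\inf_V\mathcal{F}$, so $u$ is a minimizer.

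For uniqueness I would upgrade convexity to strict convexity: the term $\int_\Omega|u\omega|^{p(x)}/p(x)\,dx$ is strictly convex in $u$ because $s\mapsto|s|^{p(x)}$ is strictly convex for $p(x)>1$ and $\omega>0$ a.e., so if $u_1,u_2$ are two minimizers with $u_1\ne u_2$ then $\mathcal{F}\big(\tfrac{u_1+u_2}{2}\big)<\tfrac12\mathcal{F}(u_1)+\tfrac12\mathcal{F}(u_2)=\inf_V\mathcal{F}$, a contradiction. Finally, to identify $u$ as the unique weak solution of \eqref{1}, I would invoke Theorem \ref{carac}: a function in $V$ minimizes $\mathcal{F}$ if and only if it satisfies the weak formulation \eqref{2} (the Euler--Lagrange equation), so the unique minimizer is precisely the unique weak solution. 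The main obstacle I anticipate is the coercivity estimate: one must carefully juggle the modular-versus-norm inequalities for variable exponents (the exponent switches between $p^-$ and $p^+$ according to whether the relevant norm is $\ge 1$ or $\le 1$), keep track of the constants coming from \eqref{A}, and make sure the Poincar\'e constant from Theorem \ref{Poincare}/\ref{Poincare2} is applied to the correct norm so that the linear term $\int f u$ is genuinely absorbed by the superlinear ($p^->1$) principal part.
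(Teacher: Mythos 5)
Your proposal is correct and follows the same overall architecture as the paper: the direct method on the reflexive space $W_{\omega,0}^{1,p(\cdot)}(\Omega)$ (the paper cites Theorem \ref{ambro-arco}), coercivity obtained exactly as you describe from \eqref{A}, the modular--norm inequalities of Lemma \ref{norma-modular}, H\"older's inequality for the linear term, and the weighted Poincar\'e inequality of Theorem \ref{Poincare}/\ref{Poincare2}, with uniqueness from strict convexity and the identification of the minimizer as the weak solution via the characterization of Theorem \ref{carac}. The one place where you genuinely diverge is weak lower semicontinuity: you argue ``convex $+$ strongly continuous $\Rightarrow$ weakly l.s.c.'' (a Mazur-type argument), whereas the paper proves it directly from the pointwise subgradient inequalities for $\xi\mapsto\langle A(x)\xi,\xi\rangle^{p(x)/2}$ and $s\mapsto|s|^{p(x)}$, and then must verify explicitly that the resulting linear perturbation terms vanish under weak convergence, which requires checking that $\langle AXu,Xu\rangle^{\frac{p(\cdot)-2}{2}}|AXu|$ and $\omega^{p(\cdot)}|u|^{p(\cdot)-2}u$ lie in $L_{\omega^{-1}}^{p'(\cdot)}(\Omega)$ via a modular computation. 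Your route is shorter and avoids that duality verification, but it shifts the burden onto the (slightly hand-waved) strong continuity of $\mathcal{F}$; note that strong lower semicontinuity, which follows at once from Fatou along an a.e.-convergent subsequence plus H\"older for the linear term, already suffices for the convexity argument, so you could weaken that claim and make the step airtight. Two further small points: your justification of strict convexity through the zero-order term $\int_\Omega|u\omega|^{p(x)}/p(x)\,dx$ with $\omega>0$ a.e.\ is cleaner than the paper's unargued assertion; and when you invoke Theorem \ref{carac}, be aware that as stated it yields a variational inequality, and one obtains the weak formulation \eqref{1} by testing with both $v$ and $-v$ (this is the content of Remark \ref{existencia} in the paper), which also gives that every weak solution is a minimizer and hence uniqueness of the weak solution.
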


The rest of the paper is organized as follows: In Subsection \ref{sec2.1} we
collect previous results about Carnot-Carath\'{e}odory spaces and Carnot
groups. Next in Subsection \ref{sec2.3} we state some necessary results
concerning the variable Muckenhoupt class $A_{p(\cdot ),q(\cdot )}$ and its
properties, and we also give conditions for the weighted norm inequalities
for the Hardy-Littlewood maximal operator and fractional integral operators
on homogeneous spaces. In Subsection \ref{sec2.2} we give the definition and
properties of the weighted Sobolev spaces with variable exponent $W_{\omega
}^{1,p(\cdot )}(\Omega )$ and $W_{\omega ,0}^{1,p(\cdot )}(\Omega )$. In
Section \ref{sec3} we prove our main results about variable weighted Poincar%
\'{e} inequalities: Theorems \ref{prin}, \ref{Poincare}, \ref{prin2} and \ref%
{Poincare2}. Finally, in Section \ref{sec4} we deal with the elliptic
problem \eqref{1} and we prove Theorems \ref{carac} and \ref{ext-unc}.

For the rest of the paper $c$ and $C$ will denote constants that may vary
line by line. Also, $C(\omega ,p(\cdot ))$ denotes that a constant $C$
depends on the weight $\omega $ and the exponent $p(\cdot )$, for example.

\section{Preliminaries}

\label{sec2}

\subsection{Analysis on Carnot-Carath\'eodory spaces}

\label{sec2.1}

\ 

Let $\mathcal{X}$ be a nonempty set. We say $(\mathcal{X},d,\mu )$ is a
homogeneous space if:

\begin{itemize}
\item $\mathcal{X}$ is endowed with a quasi-metric $d$ such that the balls
are open in the topology induced by $d$ (in particular, they form a base).
Recall that $d$ is a quasi-metric if:

\begin{enumerate}
\item $d(x,y)=0$ if only if $x=y$,

\item $d(x,y)=d(y,x)$ for all $x$, and $y$ in $\mathcal{X}$,

\item 
\begin{equation}
d(x,y)\leq K(d(x,z)+d(z,y)),\qquad \text{for all }\,x,y,z\in \mathcal{X},
\label{k}
\end{equation}%
where $K\geq 1$ is independent of $x$, $y$ and $z$.
\end{enumerate}

\item $\mu$ is a positive borel measure on $\mathcal{X}$, satisfying the
doubling condition 
\begin{equation*}
\mu(B(x,2r)) \leq C \mu(B(x,r)), \ x\in X, r>0;
\end{equation*}
where $B(x,r)=\{y \in \mathcal{X}:d(x,y)<r\}$ is the ball of radius $r$ and
centered in $x$.
\end{itemize}

Let $X=(X_{1},\cdots ,X_{n_{1}})$ be a family of infinitely differentiable
vector fields defined in $\mathbb{R}^{n}$. We identify each $X_{j}$ with the
first order differential operator acting on Lipschitz functions. For a
Lipschitz function $f$ we define $Xf(x)=(X_{1}f(x),\dots ,X_{n_{1}}f(x))$, 
\begin{equation*}
|Xf(x)|=\left( \sum_{j=1}^{n_{1}}|X_{j}f(x)|^{2}\right) ^{1/2},
\end{equation*}%
and for $f=(f_{1},\cdots ,f_{n_{1}})$, 
\begin{equation*}
\text{div}(f)(x)=\sum_{j=1}^{n_{1}}X_{j}f_{j}(x).
\end{equation*}%
Given an open, connected set $\Omega $, $X$ is said to satisfy H\"{o}%
rmander's condition in $\Omega $ if there exists a neighborhood $\Omega _{0}$
of $\Omega $ and $m\in \mathbb{N}$ such that the family of commutators of
the vector fields in $X$ up to length $m$ span $\mathbb{R}^{n}$ at every
point of $\Omega _{0}$. Hereafter, we will assume that $X$ satisfies H\"{o}%
rmander's condition on every bounded, connected subset of $\mathbb{R}^{n}$.

Let $C_X$ be the family of absolutely continuous curves $\gamma : [a, b] \to%
\mathbb{R}^n$, $a\leq b$, such that there exist measurable functions $c_j(t)$%
, $a \leq t \leq b$, $j = 1,\cdots ,n_1$, satisfying $%
\sum_{j=1}^{n_1}c_j(t)^2\leq 1$ and $\gamma^{\prime
}(t)=\sum_{j=1}^{n_1}c_j(t)X(\gamma(t))$ for almost every $t \in [a, b]$.
Given $x, y \in \Omega$, define 
\begin{equation*}
d(x,y)=\inf\{T>0: \exists \gamma \in C_X \text{ s. t. } \gamma(0)=x \text{
and } \gamma(T)=y\}.
\end{equation*}
The function $d$ is a quasi-metric on $\Omega$ called the
Carnot-Carath\'eodory metric associated to $X$, and the pair $(\Omega, d)$
is said to be a Carnot-Carath\'eodory space. We refer the reader to \cite%
{NSW} for more details on Carnot-Carath\'eodory spaces.

Nagel, Stein and Wainger proved in \cite{NSW} that for every compact set $%
K\subset \Omega $ there exist positive constants $R$ and $C(n_{1})$ such
that 
\begin{equation*}
|B(x,2r)|\leq C(n_{1})|B(x,r)|,\,\forall \,x\in K\text{ and }0<r<R,
\end{equation*}%
where $|E|$ denotes the Lebesgue measure of the measurable set $E$.
Therefore the Carnot-Carath\'{e}odory space $(\Omega ,d,dx)$ is a type
homogeneous space. The homogeneous dimension of the Carnot-Carath\'{e}odory
space is $Q=\log _{2}C(n_{1})$.

An important type of Carnot-Carath\'{e}odory space are the Carnot groups. A
Carnot group is a simply connected and connected Lie group $\mathbb{G}$,
whose Lie algebra $\mathfrak{g}$ is stratified, this means that $\mathfrak{g}
$ admits a vector space decomposition $\mathfrak{g}=V_{1}\oplus \cdots
\oplus V_{m}$ with grading $[V_{1},V_{j}]=V_{j+1}$, for $j=1,\cdots ,m-1,$
and has a family of dilations $\{\delta _{\epsilon }\}_{\epsilon >0}$ such
that $\delta _{\epsilon }X=\epsilon ^{j}X$ if $X\in V_{j}$. Denote by $%
n=n_{1}+\cdots +n_{m}$, where $n_{k}$ is the dimension of $V_{k}$. In this
case, the homogeneous dimension is $Q=\sum_{k=1}^{m}kn_{k}$, see \cite{FS}.

It is well-known that the Haar measure for $\mathbb{G}$ is a Lebesgue
measure. Let $X=\{X_{1},\dots ,X_{n_{1}}\}$ be a basis of $V_{1}$. It is
clear that $X$ satisfies H\"{o}rmander's condition in $\mathbb{R}^{n}$ and
is naturally associated with $\{X_{j}\}_{j=1}^{n_{1}}$ a Carnot-Carath\'{e}%
odory metric $d(x,y)$ for $x,y\in \mathbb{G}$. The geometry of the metric
space $(\mathbb{G},d)$ is described in \cite{NSW}, \cite{FP} and \cite{S}.
In particular, the $d-$topology and the Euclidean topology are equivalent
and the Carnot group $\mathbb{G}$ is a homogeneous space with the metric $d$
and the Lebesgue measure. We get $|B(x,r)|\sim r^{Q}$ and 
\begin{equation*}
|B(x,2r)|\lesssim 2^{Q}|B(x,r)|.
\end{equation*}

Let $\{X_{1},\cdots ,X_{n}\}$ be a basis of the Lie algebra $\mathfrak{g}$
and $I=(i_{1},\dots ,i_{n})\in \mathbb{N}_{0}^{n}$ a multiindex. We set $%
X^{I}=X_{1}^{i_{1}}X_{2}^{i_{2}}\dots X_{n}^{i_{n}}$. The operators $X^{I}$
form a basis for the algebra of left invariant differential operators on $%
\mathbb{G}$, by the Poincar\'{e}-Birkhoff-Witt theorem. The order of the
differential operators $X^{I}$ is $|I|=i_{1}+i_{2}+\dots +i_{n}$ and its 
\textit{homogeneous degree} is $d(I)=\lambda _{1}i_{1}+\lambda
_{2}i_{2}+\dots +\lambda _{n}i_{n}$, where $\lambda _{j}=i$ if $X_{j}\in
V_{i}$. For $m\in \mathbb{N}$, we define 
\begin{equation*}
|X^{m}f|=\left( \sum_{I:d(I)=m}|X^{I}f|^{2}\right) ^{1/2}.
\end{equation*}

A function $f$ on $\mathbb{G}\backslash\{0\}$ will be called \textit{%
homogeneous of degree $m$} if $f\circ\delta_{r}=r^{m}f$ for $r>0$. We say
that a function $P$ on $\mathbb{G}$ is a \textit{polynomial} if $P\circ\exp$
is a polynomial on $\mathfrak{g}$. Let $\xi_{1},\dots,\xi_{n}$ be the basis
for the linear forms on $\mathfrak{g}$ dual to the basis $X_{1},\dots,X_{n}$
on $\mathfrak{g}$. Let us set $\eta_{j}=\xi_{j}\circ\exp^{-1}$, then $%
\eta_{1},\dots,\eta_{n}$ are polynomials on $\mathbb{G}$ which form a global
coordinate system on $\mathbb{G}$, and generate the algebra of polynomials
on $\mathbb{G}$. Thus, every polynomial on $\mathbb{G}$ can be written
uniquely as $P=\sum\limits_{I}a_{I}\eta^{I}$, for $\eta^{I}=\eta_{1}^{i_{1}}%
\dots\eta_{n}^{i_{n}}$, $a_{I}\in\mathbb{C}$ where all but finitely many of
them vanish. The function $\eta^{I}$ is homogeneous of order $d(I)$. The 
\textit{homogeneous order} is $\max\{d(I):a_{I}\neq 0\}$. For each $j\in%
\mathbb{N}$ we define the space $\mathcal{P}_{j}$ of polynomials of
homogeneous degree $\le j$. See \cite{FS}.

Finally we define the weak Boman chain condition for a domain $\Omega$ in a
metric space $\mathcal{X}$.

\begin{definition}
A domain $\Omega$ in a metric space $\mathcal{X%
}$ is said to satisfy the weak Boman chain condition of type $\sigma$, $%
\Lambda$, or to be a member of $\mathcal{F}(\sigma,\Lambda)$, if there exist
constants $\sigma>1$, $\Lambda > 1$, and a family $\mathcal{F}$ of metric
balls $B \subset \Omega$ such that

\begin{itemize}
\item $\Omega=\bigcup_{B\in\mathcal{F}}B$;

\item $\Sigma_{B\in\mathcal{F}}\chi_{\sigma B}(x)\leq \Lambda \chi_\Omega(x)$
for all $x\in\mathcal{X}$;

\item there exists a central ball $B_{0}\in \mathcal{F}$ such that, for each
ball $B\in \mathcal{F}$, there exist a positive integer $k=k(B)$ and a chain 
$\{B_{j}\}_{j=0}^{k}$ of balls in $\mathcal{F}$ for which $B_{k}=B$ and each 
$B_{j}\cap B_{j+1}$ contain a ball $D_{j}\in \mathcal{F}$ with $B_{j}\cup
B_{j+1}\subset \Lambda D_{j}$;

\item $B\subset \Lambda B_j$ for $j=1,\cdots, k(B)$.
\end{itemize}
\end{definition}

\subsection{Weighted variable Lebesgue space and clasical operators}

\label{sec2.3}

\ 

The Hardy-Littlewood maximal operator and the fractional integral operator
of order $\alpha $ on homogeneous spaces are defined respectively by 
\begin{equation*}
Mf(x)=\sup_{B\ni x}\dfrac{1}{\mu (B)}\int_{B}f(y)d\mu (y),
\end{equation*}%
\begin{equation*}
I_{\alpha }f(x)=\int_{\Omega }f(y)\dfrac{d(x,y)^{\alpha }}{\mu (B(x,d(x,y)))}%
d\mu (y).
\end{equation*}%
The weighted boundedness of these operators has been extensively studied in
the literature, see \cite{CUC}, \cite{FLW}, \cite{LW}, \cite{LLT}, \cite{SW}%
. We will need to prove bounds where the weight is in the variable
Muckenhoupt class $A_{p(\cdot ),q(\cdot )}$ in homogeneous spaces $(\mathcal{%
X},d,\mu )$.


We must first start by introducing important definitions about variable
exponents.

Let $(\mathcal{X},d,\mu )$ be a homogeneous space and let $\mathcal{P}(%
\mathcal{X})$ be the collection of all measurable functions $p(\cdot ):%
\mathcal{X}\rightarrow \lbrack 1,\infty ]$. Given a set $E\subset \mathcal{X}
$, $\mu -measurable$, we define%
\begin{eqnarray*}
p_{E}^{-} &=&ess~\inf_{x\in E}~p(x)\text{,} \\
p_{E}^{+} &=&ess~\sup_{x\in E}~p(x)\text{.}
\end{eqnarray*}%
If $E=\mathcal{X}$ we write $p^{-}$ and $p^{+}$.

We next define the jump condition of a variable exponent.

\begin{definition}
Let $\Omega $ be a subset of a Carnot-Carath\'{e}odory space $(\mathcal{X}%
,d,dx)$ and let $p(\cdot )\in \mathcal{P}(\mathcal{X})$. If $p_{\Omega
}^{+}<\infty $ and if there exists $\delta >0$ such that for every $x\in
\Omega $ either 
\begin{equation*}
p_{B(x,\delta )}^{-}\geq Q
\end{equation*}%
or 
\begin{equation*}
p_{B(x,\delta )}^{+}\leq \dfrac{Qp_{B(x,\delta )}^{-}}{Q-p_{B(x,\delta )}^{-}%
}
\end{equation*}%
holds, then $p(\cdot )$ is said to satisfy the $jump\ condition$ in $\Omega $
with constant $\delta $.
\end{definition}

\begin{remark}
The hypothesis $p^{+}<Q$ implies that $p_{B(x,\delta )}^{+}\leq \dfrac{%
Qp_{B(x,\delta )}^{-}}{Q-p_{B(x,\delta )}^{-}}$.
\end{remark}

Let $p(\cdot )\in \mathcal{P}(\mathcal{X})$, $\Omega \subset \mathcal{X}$ be
an open set and $\mu $ be a borel measure. We define the $variable\
Lebesgue\ space$ $L_{\mu }^{p(\cdot )}(\Omega )$ as the set of measurable
functions $f$ on $\Omega $ for which the modular 
\begin{equation*}
\rho _{p(\cdot ),\mu }(f)=\int_{\Omega \diagdown \Omega _{\infty }}\lvert
f(x)\rvert ^{p(x)}d\mu +\left\Vert f\right\Vert _{L_{\mu }^{\infty }(\Omega
_{\infty })}\text{,}
\end{equation*}%
satisfies $\rho _{p(\cdot ),\mu }(f/\lambda )<\infty $ for some $\lambda >0$
and $\Omega _{\infty }=\left\{ x\in \Omega :p(x)=\infty \right\} $.

When the exponent and the measure are clear from context, we simply write $%
\rho _{p(\cdot ),\mu }=\rho $.

If $1\leq p^{-}\leq p^{+}<\infty $ and $\mu $ is a borel regular measure, it
is well known that the variable Lebesgue space $L_{\mu }^{p(\cdot )}(\Omega
) $ is a Banach space with the norm 
\begin{equation*}
\lVert f\rVert _{L_{\mu }^{p(\cdot )}(\Omega )}=\inf \left\{ \lambda >0:\rho
(f/\lambda )\leq 1\right\} ;
\end{equation*}%
if $1<p^{-}\leq p^{+}<\infty $, $L_{\mu }^{p(\cdot )}(\Omega )$ is also a
reflexive space.

When the measura is given by $\omega (x)^{p(x)}\,dx$, with $\omega >0$ a
measurable function, we write $\lVert f\rVert _{L_{\omega }^{p(\cdot
)}(\Omega )}$ in place of $\lVert f\rVert _{L_{\omega ^{p(\cdot )}}^{p(\cdot
)}(\Omega )}$.

If $1\leq p(\cdot )\leq q(\cdot )$ and $\Omega $ has finite measure then
there exists a constant $C(\Omega )$ such that for every $f\in L_{\omega
}^{q(\cdot )}(\Omega )$ 
\begin{equation*}
\Vert f\Vert _{L_{\omega }^{p(\cdot )}(\Omega )}\leq C(\Omega )\Vert f\Vert
_{L_{\omega }^{q(\cdot )}(\Omega )}.
\end{equation*}

We will need the following properties proved in \cite{C-UFN}.





\begin{lemma}
\label{dilationExpo} Let $(\mathcal{X},\mu )$ be a measurable space with $%
\mu $ a borel regular measure. Let $p(\cdot )\in \mathcal{P}(\mathcal{X})$
such that $|\mathcal{X}_{\infty }|=0$. Then for all $s$ with $1/p^{-}\leq
s<\infty $, 
\begin{equation*}
\Vert |f|^{s}\Vert _{L^{p}(\cdot )(\mathcal{X})}=\Vert f\Vert _{L^{sp(\cdot
)}(\mathcal{X})}^{s}.
\end{equation*}
\end{lemma}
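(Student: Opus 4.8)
The identity is the standard "change of exponent by a power" rule for variable Lebesgue spaces, and the cleanest route is through the modular. First I would record the elementary pointwise fact that for $s\ge 1/p^-$ and a.e.\ $x$ one has $(|f(x)|^{s})^{p(x)}=|f(x)|^{s\,p(x)}=(|f(x)|)^{(sp)(x)}$, together with the homogeneity of the modular under scalar multiplication: since $|\mathcal{X}_\infty|=0$ the modular is purely the integral term, so $\rho_{p(\cdot)}(|f|^{s}/\lambda)=\int_{\mathcal X}\big(|f(x)|^{s}/\lambda\big)^{p(x)}d\mu$. Setting $\lambda=\mu^{s}$ for $\mu>0$ I would rewrite this as $\int_{\mathcal X}\big(|f(x)|/\mu\big)^{s\,p(x)}d\mu=\rho_{(sp)(\cdot)}(f/\mu)$. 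Here one uses $s\ge 1/p^-$ only to guarantee $sp(\cdot)\ge 1$, i.e.\ that $sp(\cdot)$ is an admissible variable exponent in $\mathcal{P}(\mathcal X)$ so that $L^{sp(\cdot)}$ and its norm are defined; if $p^+$ or $(sp)^+$ is infinite on a positive-measure set the statement can fail, which is why the hypothesis $|\mathcal X_\infty|=0$ is imposed (it forces $|\mathcal X_{sp,\infty}|=0$ as well when $s<\infty$).

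Next I would unwind the definitions of the two norms. By definition $\big\||f|^{s}\big\|_{L^{p(\cdot)}(\mathcal X)}=\inf\{\lambda>0:\rho_{p(\cdot)}(|f|^{s}/\lambda)\le 1\}$, and by the computation above $\rho_{p(\cdot)}(|f|^{s}/\lambda)\le 1$ exactly when, writing $\lambda=\mu^{s}$, we have $\rho_{(sp)(\cdot)}(f/\mu)\le 1$. Since $t\mapsto t^{s}$ is a strictly increasing bijection of $(0,\infty)$ onto itself, the infimum over $\lambda$ equals the infimum over $\mu^{s}$, so
\begin{equation*}
\big\||f|^{s}\big\|_{L^{p(\cdot)}(\mathcal X)}=\inf\{\mu^{s}:\rho_{(sp)(\cdot)}(f/\mu)\le 1\}=\Big(\inf\{\mu:\rho_{(sp)(\cdot)}(f/\mu)\le 1\}\Big)^{s}=\|f\|_{L^{sp(\cdot)}(\mathcal X)}^{s}.
\end{equation*}
One should treat the degenerate cases separately but trivially: if $f=0$ a.e.\ both sides are $0$; if $\|f\|_{L^{sp(\cdot)}(\mathcal X)}=\infty$ then no finite $\mu$ makes the modular finite, hence no finite $\lambda$ works for $|f|^{s}$ either, so the left side is also $\infty$.

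\textbf{Main obstacle.} There is no deep obstacle; the only genuine care needed is in the bookkeeping around exponents taking the value $+\infty$. One must check that under $|\mathcal X_\infty|=0$ and $s<\infty$ the set where $sp(\cdot)=\infty$ is again null, so that the modular $\rho_{(sp)(\cdot)}$ reduces to the pure integral and the essential-supremum term drops out on both sides; this is what makes the clean substitution $\lambda=\mu^{s}$ legitimate. A secondary point is verifying $sp(\cdot)\in\mathcal P(\mathcal X)$, i.e.\ $sp(x)\ge 1$ a.e., which is exactly the role of the hypothesis $s\ge 1/p^-$. Once these admissibility points are dispatched, the proof is the three-line modular manipulation above, and indeed this is how it is carried out in \cite{C-UFN}.
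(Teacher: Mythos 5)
Your argument is correct: the paper does not prove this lemma at all but simply cites it from \cite{C-UFN}, and your modular computation (rewriting $\rho_{p(\cdot)}(|f|^{s}/\lambda)$ with $\lambda=t^{s}$ as $\rho_{sp(\cdot)}(f/t)$ and using monotonicity of $t\mapsto t^{s}$ on the infima) is exactly the standard proof given there, with the hypotheses $s\geq 1/p^{-}$ and $|\mathcal{X}_{\infty}|=0$ used precisely as you indicate. The only cosmetic flaw is reusing the symbol $\mu$ both for the measure and for the scaling parameter; rename the scalar and the write-up is fine.
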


\begin{lemma}
\label{norma-modular} Let $(\mathcal{X},\mu )$ be a measurable space with $%
\mu $ a borel regular measure. Let $p(\cdot )\in \mathcal{P}(\mathcal{X})$
with $1\leq p^{-}\leq p^{+}<\infty $. Then, for any domain $\Omega $,

\begin{itemize}
\item if $\|f\|_{L^{p(\cdot)}_\mu(\Omega)} \leq 1,$ \,\,$\|f\|_{L^{p(%
\cdot)}_\mu(\Omega)}^{p^+_\Omega}\leq \int_{\Omega}|f(x)|^{p(x)}\,d\mu(x)
\leq \|f\|_{L^{p(\cdot)}_\mu(\Omega)}^{p^-_\Omega}$,

\item if $\|f\|_{L^{p(\cdot)}_\mu(\Omega)} \geq 1,$ \,\,$\|f\|_{L^{p(%
\cdot)}_\mu(\Omega)}^{p^-_\Omega}\leq \int_{\Omega}|f(x)|^{p(x)}\,d\mu(x)
\leq \|f\|_{L^{p(\cdot)}_\mu(\Omega)}^{p^+_\Omega}$.
\end{itemize}

\end{lemma}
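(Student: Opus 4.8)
The plan is to reduce the statement to two ingredients: the elementary scaling behaviour of the modular, and the identity $\rho_{p(\cdot),\mu}(f/\|f\|)=1$ valid whenever $0<\|f\|<\infty$ (throughout, $\|\cdot\|$ abbreviates $\|\cdot\|_{L^{p(\cdot)}_\mu(\Omega)}$ and $\rho=\rho_{p(\cdot),\mu}$). Since $p^+<\infty$ we have $1\le p^-_\Omega\le p^+_\Omega<\infty$ and $\mu(\Omega_\infty\cap\Omega)=0$, so on $\Omega$ the modular is simply $\rho(g)=\int_\Omega|g(x)|^{p(x)}\,d\mu(x)$ and $\|g\|=\inf\{s>0:\rho(g/s)\le1\}$. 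If $f\equiv0$ all three quantities in the lemma vanish (using $p^-\ge1$), so assume $f\not\equiv0$ and set $\lambda=\|f\|\in(0,\infty)$.

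First I would record the scaling estimates. For a.e.\ $x\in\Omega$ one has $p^-_\Omega\le p(x)\le p^+_\Omega$; since $t\mapsto a^t$ is non-decreasing when $a\ge1$ and non-increasing when $0<a\le1$, multiplying by $|g(x)|^{p(x)}\ge0$ and integrating gives, for every measurable $g$,
\[
a^{p^-_\Omega}\rho(g)\le\rho(ag)\le a^{p^+_\Omega}\rho(g)\quad(a\ge1),\qquad a^{p^+_\Omega}\rho(g)\le\rho(ag)\le a^{p^-_\Omega}\rho(g)\quad(0<a\le1).
\]

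Next I would establish $\rho(f/\lambda)=1$. For any $\lambda'>\lambda$ there is, by definition of the infimum, some $s<\lambda'$ with $\rho(f/s)\le1$, and then $|f/\lambda'|^{p(x)}\le|f/s|^{p(x)}$ pointwise yields $\rho(f/\lambda')\le\rho(f/s)\le1$. Letting $\lambda'\downarrow\lambda$, the integrands $|f/\lambda'|^{p(x)}$ increase to $|f/\lambda|^{p(x)}$, so monotone convergence gives $\rho(f/\lambda)\le1$; in particular $\rho(f/\lambda)<\infty$, and $\rho(f/\lambda)>0$ since $f\not\equiv0$ and $p^-\ge1$. If we had $\rho(f/\lambda)<1$, then, using $p^+_\Omega<\infty$ so that $a^{p^+_\Omega}\to1$ as $a\to1^+$, we could choose $a>1$ with $a^{p^+_\Omega}\rho(f/\lambda)\le1$; the scaling estimate then gives $\rho\big(f/(\lambda/a)\big)=\rho\big(a\,(f/\lambda)\big)\le a^{p^+_\Omega}\rho(f/\lambda)\le1$ with $\lambda/a<\lambda$, contradicting the definition of $\lambda$ as an infimum. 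Hence $\rho(f/\lambda)=1$.

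Finally, put $g=f/\lambda$, so that $\rho(g)=1$ and $f=\lambda g$. If $\lambda\le1$, the $0<a\le1$ scaling estimate with $a=\lambda$ gives $\lambda^{p^+_\Omega}=\lambda^{p^+_\Omega}\rho(g)\le\rho(f)\le\lambda^{p^-_\Omega}\rho(g)=\lambda^{p^-_\Omega}$, which is exactly the first assertion of the lemma since $\rho(f)=\int_\Omega|f|^{p(x)}\,d\mu$ and $\lambda=\|f\|$. If $\lambda\ge1$, the $a\ge1$ estimate with $a=\lambda$ gives $\lambda^{p^-_\Omega}\le\rho(f)\le\lambda^{p^+_\Omega}$, the second assertion. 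The only point with real content is the identity $\rho(f/\lambda)=1$, where the hypothesis $p^+<\infty$ enters decisively through the continuity of $a\mapsto a^{p^+_\Omega}$ at $a=1$; everything else is monotonicity of exponentials and of the integral, following \cite{C-UFN}.
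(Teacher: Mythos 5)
Your proof is correct: the scaling bounds $a^{p^-_\Omega}\rho(g)\le\rho(ag)\le a^{p^+_\Omega}\rho(g)$ for $a\ge 1$ (reversed for $0<a\le 1$), combined with the identity $\rho(f/\|f\|_{L^{p(\cdot)}_\mu(\Omega)})=1$ — whose proof correctly exploits $p^+_\Omega<\infty$ through the continuity of $a\mapsto a^{p^+_\Omega}$ at $a=1$ — yield exactly the two chains of inequalities. The paper gives no proof of this lemma (it is quoted from \cite{C-UFN}), and your argument is essentially the standard one from that reference; the only case you formally exclude, $\|f\|_{L^{p(\cdot)}_\mu(\Omega)}=\infty$, is immediate from your own scaling estimate, since a finite modular forces a finite norm.
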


\begin{remark}
\label{modular-norma} The last lemma implies

\begin{itemize}
\item $\|f\|_{L^{p(\cdot)}_\mu}=1$ if and only if $\rho_{p(\cdot)}(f)=1$.

\item if $\rho_{p(\cdot)}(f)\leq C$ then $\|f\|_{L^{p(\cdot)}_\mu}\leq
\max\{ C^{1/p_-}, C^{1/p_+} \}$.

\item if $\|f\|_{L^{p(\cdot)}_\mu}\leq C$ then $\rho_{p(\cdot)}(f)\leq
\max\{ C^{p_-}, C^{p_+} \}$.
\end{itemize}
\end{remark}

\begin{lemma}[H\"{o}lder's inequality]
\label{HI} Let $(\mathcal{X},\mu )$ be a measurable space with $\mu $ a
borel regular measure. Let $p(\cdot ),\,q(\cdot ),\,r(\cdot )\,\in \mathcal{P%
}(\mathcal{X})$ with $\frac{1}{r(x)}=\frac{1}{q(x)}+\frac{1}{p(x)}$. If $%
f\in L_{\mu }^{p(\cdot )}(\mathcal{X})$ and $g\in L_{\mu }^{q(\cdot )}(%
\mathcal{X})$ then $fg\in L_{\mu }^{r(\cdot )}(\mathcal{X})$ and there
exists a constant $c$ such that 
\begin{equation*}
\Vert fg\Vert _{L_{\mu }^{r(\cdot )}(\mathcal{X})}\leq c\Vert f\Vert
_{L_{\mu }^{p(\cdot )}(\mathcal{X})}\Vert g\Vert _{L_{\mu }^{q(\cdot )}(%
\mathcal{X})}.
\end{equation*}
\end{lemma}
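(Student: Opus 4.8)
The plan is to prove this by the standard modular argument for generalized H\"older inequalities: normalize the two factors, apply a pointwise Young inequality with a pointwise‑varying conjugate pair, integrate, and pass back from a modular bound to a norm bound using Lemma~\ref{norma-modular} and Remark~\ref{modular-norma}.

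First I would reduce to the normalized case. If $\|f\|_{L_{\mu}^{p(\cdot)}(\mathcal{X})}=0$ or $\|g\|_{L_{\mu}^{q(\cdot)}(\mathcal{X})}=0$, then $f=0$ or $g=0$ $\mu$-a.e.\ and the inequality is trivial; otherwise, using the positive homogeneity of the norms, I replace $f$ and $g$ by $f/\|f\|_{L_{\mu}^{p(\cdot)}(\mathcal{X})}$ and $g/\|g\|_{L_{\mu}^{q(\cdot)}(\mathcal{X})}$, so that both norms equal $1$. By the unit-ball property (contained in Lemma~\ref{norma-modular} and Remark~\ref{modular-norma}) this gives $\rho_{p(\cdot),\mu}(f)\le 1$ and $\rho_{q(\cdot),\mu}(g)\le 1$; in particular $\|f\|_{L^{\infty}(\{p(\cdot)=\infty\})}\le 1$ and $\|g\|_{L^{\infty}(\{q(\cdot)=\infty\})}\le 1$.

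The substantive step is the pointwise estimate. From $\frac{1}{r(x)}=\frac{1}{p(x)}+\frac{1}{q(x)}$ one has $\frac{r(x)}{p(x)}+\frac{r(x)}{q(x)}=1$ (with the convention $r(x)/p(x)=0$ when $p(x)=\infty$). Hence on $\{r(\cdot)<\infty\}$ the pair $p(x)/r(x)$, $q(x)/r(x)$ is conjugate, and applying Young's inequality to $|f(x)|^{r(x)}$ and $|g(x)|^{r(x)}$ yields
\[
|f(x)g(x)|^{r(x)}\le \frac{r(x)}{p(x)}|f(x)|^{p(x)}+\frac{r(x)}{q(x)}|g(x)|^{q(x)}\le |f(x)|^{p(x)}+|g(x)|^{q(x)},
\]
the degenerate subcases $p(x)=\infty$ (where $r(x)=q(x)$), and symmetrically $q(x)=\infty$, being handled directly by $|f(x)g(x)|^{r(x)}\le \|f\|_{L^{\infty}}^{r(x)}|g(x)|^{q(x)}\le |g(x)|^{q(x)}$ after the normalization. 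On $\{r(\cdot)=\infty\}=\{p(\cdot)=\infty\}\cap\{q(\cdot)=\infty\}$ one simply uses $|fg|\le |f|\,|g|$ pointwise.

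Finally, integrating the displayed inequality over $\{r(\cdot)<\infty\}$ against $d\mu$ and taking the essential supremum over $\{r(\cdot)=\infty\}$ gives
\[
\rho_{r(\cdot),\mu}(fg)\le \rho_{p(\cdot),\mu}(f)+\rho_{q(\cdot),\mu}(g)\le 2,
\]
and then Remark~\ref{modular-norma} (a modular bounded by $2$ forces the norm to be at most $\max\{2^{1/r^{-}},2^{1/r^{+}}\}\le 2$) yields $\|fg\|_{L_{\mu}^{r(\cdot)}(\mathcal{X})}\le 2$; undoing the normalization produces the claimed bound with $c=2$. I do not expect a serious obstacle here: the only point requiring care is the bookkeeping on the sets where $p(\cdot)$, $q(\cdot)$ or $r(\cdot)$ is infinite, and keeping the convention $r(x)/p(x)=0$ for $p(x)=\infty$ consistent throughout so that the pointwise estimate — and hence its integrated form — is valid on all of $\mathcal{X}$.
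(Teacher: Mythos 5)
Your argument is correct and is essentially the standard one: the paper does not prove Lemma \ref{HI} itself but quotes it from \cite{C-UFN}, where the same normalization--pointwise Young's inequality--modular argument is used. One cosmetic point: Remark \ref{modular-norma} rests on Lemma \ref{norma-modular}, which assumes $r^{+}<\infty$, but in the case $r^{+}=\infty$ the final step follows directly from $\rho_{r(\cdot),\mu}(fg/2)\leq \tfrac12\,\rho_{r(\cdot),\mu}(fg)\leq 1$, so the conclusion $\Vert fg\Vert_{L_{\mu }^{r(\cdot )}(\mathcal{X})}\leq 2$ still holds.
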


Given $p(\cdot )$, the conjugate exponent $p^{\prime }(\cdot )$ is defined by%
\begin{equation*}
\frac{1}{p(x)}+\frac{1}{p^{\prime }(x)}=1\text{.}
\end{equation*}


\begin{definition}
Let $(\mathcal{X},d,\mu )$ be a homogeneous space and $p(\cdot ),q(\cdot
)\in \mathcal{P}(\mathcal{X})$ be such that, $1<q(\cdot )<\infty $ for some $%
\gamma ,~0\leq \gamma <1$, 
\begin{equation*}
\frac{1}{p(x)}-\frac{1}{q(x)}=\gamma \text{.}
\end{equation*}%
Given $\omega $ a locally integrable function such that $0<\omega (x)<\infty 
$ a.e. $x$, we say that $\omega $ belongs to the Muckenhoupt class $%
A_{p(\cdot ),q(\cdot )}$ if 
\begin{equation*}
\lbrack \omega ]_{A_{p(\cdot ),q(\cdot )}}=\sup_{B}\left\vert B\right\vert
^{\gamma -1}\left\Vert \omega \chi _{B}\right\Vert _{q(\cdot )}\left\Vert
\omega ^{-1}\chi _{B}\right\Vert _{p^{\prime }(\cdot )}<\infty \text{,}
\end{equation*}%
where the supremum is taken over all balls $B\subset \mathcal{X}$.
\end{definition}

The above definiton has two inmediate consequences. First, $\omega \in L_{%
\text{loc}}^{q(\cdot )}$ and $\omega ^{-1}\in L_{\text{loc}}^{p^{\prime
}(\cdot )}$. We note that since $p(\cdot )<q(\cdot )$ implies that $\omega
\in L_{\text{loc}}^{p(\cdot )}$ and also since $q^{\prime }(\cdot
)<p^{\prime }(\cdot )$ then $\omega ^{-1}\in L_{\text{loc}}^{q^{\prime
}(\cdot )}$. Second, if $1<p^{-}$ note that $\omega \in A_{p(\cdot ),q(\cdot
)}$ if and only if $\omega ^{-1}\in A_{q^{\prime}(\cdot ), p^{\prime }(\cdot )}$.

If $\gamma =0$ we write $A_{p(\cdot )}$ in place of $A_{p(\cdot ),p(\cdot )}$%
.

\begin{remark}
If we consider a domain $\Omega $ we say that the weight $\omega $ belongs
to the Muckenhoupt class $A_{p(\cdot ),q(\cdot )}(\Omega )$ if the
definition is satisfied by taking supreme over the balls $B\subset \Omega $.
For some results in this paper this definition is enough, but to simplify
the presentation we have always considered the definition of the weights in
all $\mathcal{X}$.
\end{remark}

We will use the following properties of the Muckenhoupt class

\begin{lemma}
\label{propA} If $p$ and $q$ are constant we get

\begin{itemize}
\item $p<q$ implies $A_{p}\subset A_{q}$ and we define $A_{\infty
}=\bigcup_{p\geq 1}A_{p}$.

\item If $p>1$ then $A_{p,q}\subset A_{1+q/p^{\prime }}$.

\item If $\omega \in A_p$ then $w\in L^1_{\text{loc}}$.

\item If $\omega_1 , \omega_2 \in A_1$ then $\omega=\omega_1^{1-p}\omega_2
\in A_p$.

\item If $\omega \in A_{p,q}$ then $\omega^q$ is doubling.
\end{itemize}
\end{lemma}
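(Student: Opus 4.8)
The plan is to check the five items one at a time; each is classical in the Euclidean setting and carries over to a space of homogeneous type $(\mathcal{X},d,\mu)$ because the arguments use only averages over balls, the doubling of $\mu$, and Jensen/H\"{o}lder inequalities. Write $\langle g\rangle_B=\mu(B)^{-1}\int_B g\,d\mu$. The identity $A_\infty=\bigcup_{p\ge1}A_p$ is the definition adopted here, so nothing is required. For $A_p\subset A_q$ with $1<p<q$, start from $[\omega]_{A_q}=\sup_B\langle\omega\rangle_B\langle\omega^{-1/(q-1)}\rangle_B^{q-1}$ and bound $\langle\omega^{-1/(q-1)}\rangle_B^{q-1}\le\langle\omega^{-1/(p-1)}\rangle_B^{p-1}$ by the power-mean inequality, which applies since $0<\tfrac1{q-1}<\tfrac1{p-1}$; multiplying by $\langle\omega\rangle_B$ and taking the supremum gives $[\omega]_{A_q}\le[\omega]_{A_p}$. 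That $\omega\in A_p$ implies $\omega\in L^1_{\mathrm{loc}}$ is immediate from the definition of $A_{p(\cdot),q(\cdot)}$ used above (local integrability of $\omega$ is part of it); alternatively, finiteness of $\|\omega\chi_B\|_{p}$ on each ball of finite measure forces $\omega\in L^1(B)$.

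For $A_{p,q}\subset A_{1+q/p'}$ with $p>1$ (so $1<p'<\infty$), the first step is the normalization that in the constant-exponent case $[\omega]_{A_{p,q}}=\sup_B\langle\omega^q\rangle_B^{1/q}\langle\omega^{-p'}\rangle_B^{1/p'}$, the powers of $\mu(B)$ cancelling because $\tfrac1p-\tfrac1q=\gamma$. Putting $s=1+q/p'$, so that $s-1=q/p'$ and hence $\langle\omega^{-1/(s-1)}\rangle_B^{s-1}=\langle\omega^{-p'/q}\rangle_B^{q/p'}$, Jensen ($q\ge1$) gives $\langle\omega\rangle_B\le\langle\omega^q\rangle_B^{1/q}$, while the power-mean inequality ($p'/q\le p'$, again because $q\ge1$) gives $\langle\omega^{-p'/q}\rangle_B^{q/p'}\le\langle\omega^{-p'}\rangle_B^{1/p'}$; the product of these two is exactly $[\omega]_{A_s}\le[\omega]_{A_{p,q}}$. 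The same computation applied to $\nu=\omega^q$ shows $\nu\in A_s$ with $s<\infty$, which yields the last item: every $A_s$ weight with $s<\infty$ is doubling on a homogeneous space, the standard reason being that H\"{o}lder applied to $\tfrac{\mu(E)}{\mu(B)}=\langle\chi_E\,\nu^{1/s}\nu^{-1/s}\rangle_B$ for $E\subset B$ produces $(\mu(E)/\mu(B))^s\le[\nu]_{A_s}\,\nu(E)/\nu(B)$, so taking $E=B\subset 2B$ and using the doubling of $\mu$ gives $\nu(2B)\le C\,\nu(B)$.

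For the factorization item, given $\omega_1,\omega_2\in A_1$ and $1<p<\infty$, set $\omega=\omega_1^{1-p}\omega_2$ and fix a ball $B$. Since $1-p<0$ and $\omega_1\ge\einf_B\omega_1$ a.e.\ on $B$, one has $\langle\omega\rangle_B\le(\einf_B\omega_1)^{1-p}\langle\omega_2\rangle_B$; likewise, from $\omega^{-1/(p-1)}=\omega_1\,\omega_2^{-1/(p-1)}$ and $\omega_2\ge\einf_B\omega_2$ a.e.\ on $B$, $\langle\omega^{-1/(p-1)}\rangle_B\le(\einf_B\omega_2)^{-1/(p-1)}\langle\omega_1\rangle_B$. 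Multiplying and invoking the $A_1$ estimates $\langle\omega_i\rangle_B\le[\omega_i]_{A_1}\einf_B\omega_i$ gives $[\omega]_{A_p}\le[\omega_1]_{A_1}^{p-1}[\omega_2]_{A_1}$, uniformly in $B$. I do not expect a genuine obstacle: the only point that needs care is the bookkeeping in the first step of the middle paragraph, namely checking that the norm form of $[\omega]_{A_{p,q}}$ in the definition above coincides with the classical average form; once that is in hand, each assertion is a one-line Jensen or H\"{o}lder estimate with constants depending only on $p$, $q$ and the doubling constant of $\mu$.
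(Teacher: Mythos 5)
Your proofs are correct and are exactly the classical power-mean/Jensen/H\"older arguments; the paper itself states Lemma \ref{propA} without proof, treating these as standard facts from Muckenhoupt weight theory that transfer verbatim to spaces of homogeneous type, so there is nothing to reconcile. One tiny remark: for the last two items the cleaner route is the identity $[\omega^{q}]_{A_{1+q/p'}}=[\omega]_{A_{p,q}}^{q}$ (no Jensen needed), from which $\omega^{q}\in A_{\infty}$ and hence its doubling follow at once, but your version is equally valid.
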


\begin{definition}
We say that an exponent $p(\cdot )\in \mathcal{P}(\mathcal{X})$ is \textit{%
locally\ Log-H\"{o}lder\ continuous}, $p(\cdot )\in LH_{0}$, if there exists
a constant $C_{0}$ such that for any $x,y\in \mathcal{X}$, with $d(x,y)<%
\frac{1}{2}$, 
\begin{equation*}
\lvert p(x)-p(y)\rvert <\dfrac{-C_{0}}{\log (d(x,y))}.
\end{equation*}%
We say that $p(\cdot )\in \mathcal{P}(\mathcal{X})$ is \textit{Log-H\"{o}%
lder\ continuous\ at\ infinity,} $p(\cdot )\in LH_{\infty }$, with respect
to a base point $x_{0}\in \mathcal{X}$ if there exist constants $C_{\infty }$
and $p_{\infty }$ such that for every $x\in \mathcal{X}$, 
\begin{equation*}
\lvert p(x)-p_{\infty }\rvert <\dfrac{C_{\infty }}{\log (e+d(x,x_{0}))}.
\end{equation*}%
If $p(\cdot )\in LH=LH_{0}\cap LH_{\infty }$ we say that $p(\cdot )$ is 
\textit{globally\ Log-H\"{o}lder\ continuous}.
\end{definition}

In \cite{CUC} the authors prove the following result.

\begin{theorem}
Let $p(\cdot )\in \mathcal{P}(\mathcal{X})$, $1<p_{-}\leq p_{+}<\infty $,
and suppose $p(\cdot )\in LH$. Then for every $\omega \in A_{p(\cdot )}$, 
\begin{equation}
\lVert Mf\rVert _{L_{\omega }^{p(\cdot )}}\leq C\lVert f\rVert _{L_{\omega
}^{p(\cdot )}}.  \label{BoundedMaximal}
\end{equation}%
Conversely, given any $p(\cdot )$ and $\omega $, if (\ref{BoundedMaximal})
holds for $f\in L_{\omega }^{p(\cdot )}$, then $p_{-}>1$ and $\omega \in
A_{p(\cdot )}$.
\end{theorem}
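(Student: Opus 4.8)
\emph{Necessity.} Assume \eqref{BoundedMaximal} holds for every $f\in L_{\omega}^{p(\cdot)}$. The plan for this direction is first to extract $\omega\in A_{p(\cdot)}$ by a testing argument and then to deduce $p^{-}>1$ separately. Fix a ball $B$. For any nonnegative $h$ with $\operatorname{supp}h\subset B$ and $\|h\|_{p(\cdot)}\le 1$, set $f=\omega^{-1}h$; then $\|f\|_{L_{\omega}^{p(\cdot)}}=\|h\|_{p(\cdot)}\le 1$, and from $Mf(x)\ge\mu(B)^{-1}\int_{B}f\,d\mu$ for $x\in B$ together with \eqref{BoundedMaximal} one obtains
\[
\mu(B)^{-1}\Big(\int_{B}\omega^{-1}h\,d\mu\Big)\,\|\omega\chi_{B}\|_{p(\cdot)}\ \le\ \|(Mf)\,\omega\|_{p(\cdot)}\ \le\ C\,\|f\|_{L_{\omega}^{p(\cdot)}}\ \le\ C .
\]
Taking the supremum over such $h$ and invoking the norm-conjugate formula for variable Lebesgue spaces (the converse of Lemma \ref{HI}) gives $\mu(B)^{-1}\|\omega\chi_{B}\|_{p(\cdot)}\|\omega^{-1}\chi_{B}\|_{p'(\cdot)}\le C'$; as $B$ was arbitrary, $\omega\in A_{p(\cdot)}$. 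For the necessity of $p^{-}>1$ I would argue exactly as in the unweighted case: if $p^{-}=1$, testing $M$ on functions highly concentrated on small sets where $p(\cdot)$ is close to $1$ reproduces the failure of the strong $(1,1)$ inequality for $M$, contradicting \eqref{BoundedMaximal}.

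\emph{Sufficiency.} By density of bounded functions with bounded support, monotone convergence (noting $Mf_{n}\uparrow Mf$ when $f_{n}\uparrow f$), the positive homogeneity of $M$, and Lemma \ref{norma-modular} together with Remark \ref{modular-norma}, it suffices to find a constant $C$ such that
\[
\int_{\mathcal{X}}\big(Mf(x)\,\omega(x)\big)^{p(x)}\,dx\ \le\ C
\qquad\text{whenever}\qquad f\ge 0,\quad \int_{\mathcal{X}}\big(f(y)\,\omega(y)\big)^{p(y)}\,dy\ \le\ 1 .
\]
Since $(\mathcal{X},d,\mu)$ is a space of homogeneous type, there is a finite family of adjacent dyadic systems (Christ's dyadic cubes, or Hyt\"onen--Kairema) with $Mf\lesssim\sum_{i}M^{\Delta_{i}}f$ pointwise; because $p^{+}<\infty$ gives $\big(\sum_{i}a_{i}\big)^{p(x)}\lesssim\sum_{i}a_{i}^{p(x)}$, it is enough to bound $\int\big(M^{\Delta}f\,\omega\big)^{p(x)}\,dx$ for a single dyadic maximal operator $M^{\Delta}$. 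A Calder\'on--Zygmund stopping-time decomposition at heights $\Lambda^{k}$, with $\Lambda$ large depending only on the doubling constant, produces a sparse family $\mathcal{S}$ of dyadic cubes and pairwise disjoint sets $E_{Q}\subset Q$ with $\mu(E_{Q})\ge\tfrac12\mu(Q)$ such that $M^{\Delta}f\lesssim\langle f\rangle_{Q}:=\mu(Q)^{-1}\int_{Q}f\,d\mu$ on $E_{Q}$ and $\bigcup_{Q\in\mathcal{S}}E_{Q}$ exhausts $\mathcal{X}$ up to a null set. The problem is thereby reduced to proving $\sum_{Q\in\mathcal{S}}\int_{E_{Q}}\langle f\rangle_{Q}^{\,p(x)}\,\omega(x)^{p(x)}\,dx\le C$ under the above normalisation.

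For a single cube I would split $f=f\chi_{\{f\omega>1\}}+f\chi_{\{f\omega\le 1\}}=:f_{1}+f_{2}$. For $f_{1}$, H\"older's inequality (Lemma \ref{HI}) gives $\int_{Q}f_{1}\,d\mu\le C\,\|f_{1}\omega\chi_{Q}\|_{p(\cdot)}\,\|\omega^{-1}\chi_{Q}\|_{p'(\cdot)}$, and the $A_{p(\cdot)}$ condition replaces $\|\omega^{-1}\chi_{Q}\|_{p'(\cdot)}$ by $[\omega]_{A_{p(\cdot)}}\,\mu(Q)\,\|\omega\chi_{Q}\|_{p(\cdot)}^{-1}$; for $f_{2}$ one uses $f_{2}\le\omega^{-1}$ on its support and the local integrability of $\omega^{\pm 1}$ that comes with $\omega\in A_{p(\cdot)}$. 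The decisive step, and the one I expect to be the main obstacle, is to pass from these inequalities — which involve the \emph{variable} norm over $Q$ — to pointwise estimates for $\langle f_{i}\rangle_{Q}^{p(x)}\omega(x)^{p(x)}$ that can be integrated over $E_{Q}$ and then summed over $\mathcal{S}$. The mechanism is to freeze $p(\cdot)$ to a constant $p_{Q}$ on $Q$ (its essential infimum, or a harmonic average of $p$ over $Q$), absorbing the error factors $\mu(Q)^{\pm|p(x)-p_{Q}|}$ by local log-H\"older continuity $LH_{0}$ when $\mu(Q)$ is small and by $LH_{\infty}$ when $\mu(Q)\gtrsim 1$, and then to apply the classical constant-exponent weighted dyadic maximal inequality for the exponent $p_{Q}$ and the frozen weight (an $A_{p_{Q}}$ weight, hence doubling, by Lemma \ref{propA}). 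Convergence of the resulting series over $\mathcal{S}$ then rests on the disjointness and sparseness of the $E_{Q}$, on the normalisation $\int(f\omega)^{p}\le 1$, and on the hypotheses $1<p^{-}\le p^{+}<\infty$, which keep the frozen exponents and the log-H\"older correction factors uniform; coordinating all of these simultaneously is the technical heart of the proof.
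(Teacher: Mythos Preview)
The paper does not prove this theorem at all: it is quoted verbatim as a result of Cruz-Uribe and Cummings \cite{CUC} and used as a black box (its only role in the paper is to furnish examples of $M$-pairs). There is therefore no ``paper's own proof'' to compare your attempt against.

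That said, your sketch of the sufficiency direction has a real gap at the step you yourself flag as decisive. After freezing $p(\cdot)$ to a constant $p_{Q}$ on each sparse cube, you propose to ``apply the classical constant-exponent weighted dyadic maximal inequality for the exponent $p_{Q}$ and the frozen weight (an $A_{p_{Q}}$ weight, hence doubling, by Lemma \ref{propA}).'' But $\omega\in A_{p(\cdot)}$ does \emph{not} give you, for free, that $\omega$ (or $\omega^{p_{Q}}$, or any natural candidate) lies in the constant-exponent class $A_{p_{Q}}$ with a constant uniform in $Q$; Lemma \ref{propA} concerns only constant exponents and says nothing about passing from the variable class to a constant one. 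In the actual proof in \cite{CUC} (and in the Euclidean predecessors of Cruz-Uribe, Fiorenza and Neugebauer) this is precisely the heart of the matter: one does not reduce to a constant-exponent weighted inequality at all, but instead works directly with the $A_{p(\cdot)}$ condition together with a Diening-type lemma controlling averages of $\omega^{-1}$ and $\omega$ over cubes in terms of the variable-exponent norms, combined with the $LH$ hypothesis to handle the exponent oscillation. Your necessity argument is essentially correct and standard; the sufficiency outline captures the right architecture (dyadic reduction, sparse/CZ decomposition, large/small splitting of $f$), but the bridge from the variable $A_{p(\cdot)}$ condition to a usable cube-by-cube estimate is exactly what is missing.
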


\bigskip Given $p(\cdot )\in \mathcal{P}(\mathcal{X})$ and a weight $\omega $%
, we will say that $(p(\cdot ),\omega )$ is a $M-pair$ if the maximal
operator $M$ is bounded on $L_{\omega }^{p(\cdot )}$ and $L_{\omega
^{-1}}^{p^{\prime }(\cdot )}$. We observe that if $p(\cdot )\in LH$, $%
1<p^{-}\leq p^{+}<\infty $ and $\omega \in A_{p(\cdot )}$ then $(p(\cdot
),\omega )$ is a $M-pair$. It is a sufficient condition but it is not known
if it is also a necessary condition.

\subsection{Weighted variable Sobolev spaces}

\label{sec2.2}

\ 

In this section we are going to define Sobolev spaces with variable exponent
and we establish their basic properties.

Let $\Omega $ be a domain in a Carnot group $\mathbb{G}$, let $m$ be a
positive integer and $1<p^{-}\leq p^{+}<\infty $. The Sobolev space $%
W_{\omega }^{m,p(\cdot )}(\Omega )$ associated with the vector fields $%
\{X_{1},\cdots ,X_{n_{1}}\}$ consist of all functions $f\in L_{\omega
}^{p(\cdot )}(\Omega )$ with the absolute value of distributional
derivatives $|X^{I}f|\in L_{\omega }^{p(\cdot )}(\Omega )$ for every $X^{I}$
differential operators with homogeneous degree $d(I)\leq m$.

Here we say that the distributional derivative $X^If$ exists and equals a
locally integrable function $g_I$ if for every $\phi \in C_0^\infty(\Omega)$%
, 
\begin{equation*}
\int_\Omega fX^{I}\phi\,dx=(-1)^{d(I)}\int_\Omega g_I\phi\,dx.
\end{equation*}
$W^{m,p(\cdot)}_\omega(\Omega)$ is equipped with the norm 
\begin{equation*}
\|f\|_{W^{m,p(\cdot)}_\omega(\Omega)}=\|f\|_{L^{p(\cdot)}_\omega(\Omega)}+%
\sum_{I:1\leq d(I)\leq m}\|X^If\|_{L^{p(\cdot)}_\omega(\Omega)}.
\end{equation*}

\begin{definition}
Let $\Omega $ be a open set in a Carnot group $\mathbb{G}$ and $p(\cdot )\in 
\mathcal{P}(\mathbb{G})$ with $1<p^{-}\leq p^{+}<\infty $. Given a weight $%
\omega >0$, we define the Sobolev space $W_{\omega ,0}^{1,p(\cdot )}(\Omega
) $ by $\overline{C_{0}^{1}(\Omega )}$, where the closure is with respect to
the norm of $W_{\omega }^{1,p(\cdot )}(\Omega )$.
\end{definition}


\begin{remark}
When the exponent $p$ is constant we get that $C^{m}(\Omega )\cap
W^{m,p}(\Omega )$ is dense in $W^{m,p}(\Omega )$, this is not true in the
general case, see \cite{ER} and \cite{ER-1}. The variable exponent $p(\cdot
)\in \mathcal{P}(\mathbb{G})$ is said to satisfy the density condition if $%
C^{m}(\Omega )\cap W^{m,p(\cdot )}(\Omega )$ is dense in $W^{m,p(\cdot
)}(\Omega )$. It is not known in general when the density condition holds.
Edmund and R\'{a}kosn\'{\i}v, in \cite{ER-1}, consider $\Omega =\mathbb{R}%
^{n}$ and they have proved that the following condition is sufficient: for
every $x\in \mathbb{R}^{n}$ there exists a number $h(x)>0$ and a vector $\xi
(x)\in \mathbb{R}^{n}\diagdown \{0\}$ such that

\begin{itemize}
\item $h(x)<|\xi(x)|\leq 1,$ and

\item $p(x)\leq p(x+y)$ for a.e. $x\in\mathbb{R}^n$ and $y\in
\bigcup_{0<t\leq 1}B_{t\xi(x),th(x)}$.
\end{itemize}
\end{remark}


In \cite{CF} the authors prove that the space $L^{p(\cdot )}(\Omega )$ is a
reflexive Banach space for $1<p^{-}\leq p^{+}<\infty $, observe that if $%
\{u_{k}\}$ is a Cauchy sequence in $L_{\omega }^{p(\cdot )}(\Omega )$ then $%
\{u_{k}\omega \}$ is a Cauchy sequence in $L^{p(\cdot )}(\Omega )$ and it is
easy to see that $L_{\omega }^{p(\cdot )}(\Omega )$ is a reflexive Banach
space. We are going to study these properties for the space $W_{\omega
}^{1,p(\cdot )}(\Omega )$. First, we observe that if $\{u_{k}\}$ is a Cauchy
sequence in $W_{\omega }^{1,p(\cdot )}(\Omega )$ we have that $\{u_{k}\}$, $%
\{X_{1}u_{k}\}$, $\cdots ,$ $\{X_{n_{1}}u_{k}\}$ are Cauchy sequences in $%
L_{\omega }^{p(\cdot )}(\Omega )$. We are going to show that $W_{\omega
}^{1,p(\cdot )}(\Omega )$ is closed in $L_{\omega }^{p(\cdot )}(\Omega
)\times \cdots \times L_{\omega }^{p(\cdot )}(\Omega )$. Since $L_{\omega
}^{p(\cdot )}(\Omega )$ is a Banach space exist $(u,U):=(u,U_{1},\cdots
,U_{n_{1}})$ limits of $\{u_{k}\}$, $\{X_{1}u_{k}\}$, $\cdots ,$ $%
\{X_{n_{1}}u_{k}\}$ in $L_{\omega }^{p(\cdot )}(\Omega )$ respectively.

In \cite{FKS}, p. 91, for $p=2$ the authors construct a weight $\omega $ and
a sequence of functions $\{u_{k}\}$ in the interval $I=[0,1]$ such that

\begin{itemize}
\item $\lim_{k\to \infty} \int_I|u_k(x)|^2\omega(x)^2dx=0,$

\item $\lim_{k\to \infty} \int_I|u_k^{\prime}(x)-1|^{2}\omega(x)^2dx=0,$
\end{itemize}

that is, the sequence $\{u_k\}$ converges to $0$ in $L^2_\omega(I)$ and $%
\{u_k^{\prime }\}$ converges to $1$ in $L^2_\omega(I)$. The weight defined
in \cite{FKS} does not belong to $A_\infty$.

Now we are going to show that if $\omega \in A_{p(\cdot ),q(\cdot )}$ then
this cannot happen. Suppose to the contrary that $(u,U)$ and $(u,V)$ are
limit of $\{u_{k}\}$. This is equivalent to say that there exists a Cauchy
sequence $\{u_{k}\}$ in $W_{\omega }^{1,p(\cdot )}(\Omega )$ such that $%
u_{k} $ converges to $0$ in $L_{\omega }^{p(\cdot )}(\Omega )$ and $%
\{|Xu_{k}|\}$ converges to $|U|$ in $L_{\omega }^{p(\cdot )}(\Omega )$ with $%
U\neq 0$. Let $B\subset \Omega $ be any ball and let $g\in C_{0}^{\infty
}(B) $. Then, for $j=1,\cdots ,n_{1}$, by H\"{o}lder inequality, Lemma \ref%
{HI} 
\begin{align*}
\left\vert \int_{B}g(x)U_{j}(x)dx\right\vert & \leq \left\vert
\int_{B}g(x)(U_{j}(x)-X_{j}u_{k}(x))dx\right\vert +\left\vert
\int_{B}g(x)X_{j}u_{k}(x)dx\right\vert \\
& \leq C\Vert U_{j}-X_{j}u_{k}\Vert _{L_{\omega }^{p(\cdot )}(B)}\Vert
g\Vert _{L_{\omega ^{-1}}^{p^{\prime }(\cdot )}(B)}+C\Vert u_{k}\Vert
_{L_{\omega }^{p(\cdot )}(B)}\Vert X_{j}g\Vert _{L_{\omega ^{-1}}^{p^{\prime
}(\cdot )}(B)}.
\end{align*}%
Since $w\in A_{p(\cdot ),q(\cdot )}$ then $\omega ^{-1}\in L_{\text{loc}%
}^{p^{\prime }(\cdot )}$, and as $g,X_{j}g\in L^{\infty }(B)$ we get that $%
\Vert g\Vert _{L_{\omega ^{-1}}^{p^{\prime }(\cdot )}(B)}$ and $\Vert
X_{j}g\Vert _{L_{\omega ^{-1}}^{p^{\prime }(\cdot )}(B)}$ are finite. So if
we take limit when $k\rightarrow \infty $ we obtain that the right side of
the above inequality goes to $0$. Since this holds for all such $B$ and $g$, 
$U_{j}=0$ for all $j=1,\cdots ,n_{1}$.

Now, we prove that if $\{u_{k}\}$ converges to $(u,U)=(u,U_{1},\cdots
,U_{n_{1}})$ then $U_{j}$ corresponds to the distributional $X_{j}$
derivative of $u$, that is, 
\begin{equation}
\int_{\Omega }U_{j}\phi \,dx=-\int_{\Omega }uX_{j}\phi \,dx,\qquad \text{for
all }\,\phi \in C_{0}^{\infty }(\Omega ).  \label{Uj}
\end{equation}%
We first note that both of the integrals appearing in \eqref{Uj} exist:
after multiplying by $\omega (x)\omega (x)^{-1}$, using H\"{o}lder
inequality, Lemma \ref{HI}, and since $u,U_{j}\in L_{\omega }^{p(\cdot
)}(\Omega )$, and $\phi ,X_{j}\phi \in L_{\omega ^{-1}}^{p^{\prime }(\cdot
)}(\Omega )$, we have that, 

\begin{align*}
\left\vert \int_{\Omega }U_{j}\phi +uX_{j}\phi \,dx\right\vert & \leq
\left\vert \int_{\Omega }(U_{j}-X_{j}u_{k})\phi \,dx\right\vert +\left\vert
\int_{\Omega }(u-u_{k})X_{j}\phi \,dx\right\vert \\
& \leq C\Vert u_{k}-u\Vert _{L_{\omega }^{p(\cdot )}(\Omega )}\Vert \phi
\Vert _{L_{\omega ^{-1}}^{p^{\prime }(\cdot )}(\Omega )} \\
& \qquad \quad +\Vert X_{j}u_{k}-U_{j}\Vert _{L_{\omega }^{p(\cdot )}(\Omega
)}\Vert X_{j}\phi \Vert _{L_{\omega ^{-1}}^{p^{\prime }(\cdot )}(\Omega )},
\end{align*}%
and so as $k\rightarrow \infty $ we obtain \eqref{Uj}.

We have shown that $W^{1,p(\cdot)}_\omega(\Omega)$ is a closed subspace of $%
L^{p(\cdot)}_\omega(\Omega) \times \cdots \times L^{p(\cdot)}_\omega(\Omega)$%
, therefore $W^{1,p(\cdot)}_\omega(\Omega)$ is a reflexive Banach space for $%
1<p^-\leq p^+<\infty$ and $\omega\in A_{p(\cdot),q(\cdot)}$.

\section{Proof of the main results.}\label{sec3}

In order to prove the theorems we will need to obtain the bounded fractional integral operator $I_\alpha$ with respect to the weights $\omega\in A_{p(\cdot),q(\cdot)}$. For this we use the next extrapolation theorem in variable Lebesgue spaces with  weight. This result is proved in \cite{C-W} for the case $\Xx=\mathbb{R}^n$. But it is not difficult to adapt for general homogeneous spaces.

We will write the extrapolation theorem for pair of functions $(f,g)$ contained in some family $\Ff$. Hereafter, if we write
$$
\|f\|_{\Xx}\leq C\|g\|_{\mathcal{Y}}. \qquad\qquad (f,g)\in\Ff,
$$
where $\Xx$ and $\mathcal{Y}$ are weighted classical or variable Lebesgue spaces, then mean this inequality is true for every pair $(f,g)\in\Ff$ such that the left hand side of this inequality is finite.
\begin{theorem}\label{extrapolation} 
Let be $(\Xx,d,\mu)$ a homogeneous space. Suppose that for some $p_{0},q_{0}$, $1<p_{0}\leq
q_{0}<\infty $, and every $\omega_{0}\in A_{p_{0},q_{0}}$, 
\begin{equation}  \label{conditionbase}
\left( \int_{\Xx}f(x)^{q_{0}}\omega _{0}(x)^{q_{0}}dx\right) ^{ 
\frac{1}{q_{0}}}\leq C\left( \int_{\Xx}g(x)^{p_{0}}\omega
_{0}(x)^{p_{0}}dx\right) ^{\frac{1}{p_{0}}}\text{, \ }(f,g)\in \mathcal{F} 
\text{.}
\end{equation}
Given $p(\cdot ),q(\cdot )\in \mathcal{P}(\Xx)$, suppose 
\begin{equation*}
\frac{1}{p(x)}-\frac{1}{q(x)}=\frac{1}{p_{0}}-\frac{1}{q_{0}}\text{.}
\end{equation*}
Define $\sigma \geq 1$ by $\frac{1}{\sigma ^{\prime }}=\frac{1}{p_{0}}-\frac{
1}{q_{0}}$. If $\omega \in A_{p(\cdot ),q(\cdot )}$ and $\left( \frac{q(\cdot
)}{\sigma },\omega ^{\sigma }\right) $ is a $M-pair$, then 
\begin{equation*}
\left\Vert f\right\Vert_{L^{q(\cdot )}_\omega(\Xx)}\leq C\left\Vert g\right\Vert
_{L^{p(\cdot )}_\omega (\Xx)}\text{, \ \ }(f,g)\in \mathcal{F}\text{ .}
\end{equation*}
The theorem holds if $p_0 =1$ if we assume only that the maximal operator is
bounded on $L^{(q(\cdot)/q_0)^{\prime }}(\omega^{-q_0})$.
\end{theorem}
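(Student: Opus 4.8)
The plan is to transfer to the space of homogeneous type $(\mathcal{X},d,\mu)$ the Rubio de Francia extrapolation argument of \cite{C-W}, which is written there for $\mathcal{X}=\mathbb{R}^{n}$. Every tool it uses has a counterpart here: since $\mu$ is doubling and the balls form a base, the maximal operator $M$ and the relevant $A_{p}(\mu)$-theory are available --- in particular the factorization of Muckenhoupt weights (Lemma \ref{propA}) and the fact that suitable iterates of $M$ produce $A_{1}(\mu)$-weights --- while the duality and reflexivity of the weighted variable Lebesgue spaces $L^{r(\cdot)}_{\nu}(\mathcal{X})$, whose associate space is $L^{r'(\cdot)}_{\nu^{-1}}(\mathcal{X})$, hold as recorded in Subsection \ref{sec2.3} and the preceding discussion. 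Thus the proof amounts to the usual steps of the Euclidean argument with only notational changes; I indicate them and where the hypotheses enter.

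First I would reduce the claimed inequality to a linear one by duality. We may assume $f,g\ge 0$; since $\frac1{p(x)}-\frac1{q(x)}=\frac1{\sigma'}$ is constant we have $q^{-}\ge p^{-}>1$ and $q^{+}<\infty$. Raising the output norm to the power $q_{0}$ and using Lemma \ref{dilationExpo} together with the duality of $L^{q(\cdot)/q_{0}}$,
\[
\|f\|_{L^{q(\cdot)}_{\omega}(\mathcal{X})}^{q_{0}}=\bigl\|f^{q_{0}}\omega^{q_{0}}\bigr\|_{L^{q(\cdot)/q_{0}}(\mathcal{X})}\le C\int_{\mathcal{X}}f^{q_{0}}\omega^{q_{0}}h\,d\mu
\]
for a suitable nonnegative $h$ with $\|h\|_{(L^{q(\cdot)/q_{0}})'}\le 1$; it therefore suffices to bound the integral on the right by $C\|g\|_{L^{p(\cdot)}_{\omega}}^{q_{0}}$.

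Next I would manufacture, from $h$ and $\omega$, a single weight $\omega_{0}\in A_{p_{0},q_{0}}$ with constant independent of $h$, such that $\omega_{0}^{q_{0}}\ge\omega^{q_{0}}h$ a.e.\ and $\|\omega_{0}^{p_{0}}\omega^{-p_{0}}\|_{(L^{p(\cdot)/p_{0}})'}\le C$. This is where the $M$-pair hypothesis on $(q(\cdot)/\sigma,\omega^{\sigma})$ is used: it gives boundedness of $M$ on $L^{q(\cdot)/\sigma}_{\omega^{\sigma}}$ and on its associate $L^{(q(\cdot)/\sigma)'}_{\omega^{-\sigma}}$, so two Rubio de Francia iteration operators $\mathcal{R}_{i}\psi=\sum_{k\ge 0}(2\|M\|)^{-k}M^{k}\psi$ converge on these spaces and, applied to suitable functions built from $h$ and from $\omega$, produce $A_{1}(\mu)$-weights $u_{0},u_{1}$; the product $\omega_{0}^{q_{0}}=u_{0}\,u_{1}^{1-q_{0}/\sigma}$ then lies in $A_{q_{0}/\sigma}$ by the factorization of Lemma \ref{propA}, i.e.\ $\omega_{0}\in A_{p_{0},q_{0}}$, using the identity $q_{0}/\sigma=1+q_{0}/p_{0}'$ forced by $\frac1{\sigma'}=\frac1{p_{0}}-\frac1{q_{0}}$, and the exponents can be arranged so as to secure the domination and the dual-norm bound. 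With such an $\omega_{0}$, the hypothesis \eqref{conditionbase} yields
\[
\int_{\mathcal{X}}f^{q_{0}}\omega^{q_{0}}h\,d\mu\le\int_{\mathcal{X}}f^{q_{0}}\omega_{0}^{q_{0}}\,d\mu\le C\Bigl(\int_{\mathcal{X}}g^{p_{0}}\omega_{0}^{p_{0}}\,d\mu\Bigr)^{q_{0}/p_{0}},
\]
and Hölder's inequality (Lemma \ref{HI}) for the exponents $p(\cdot)/p_{0}$ and $(p(\cdot)/p_{0})'$, applied to the splitting $g^{p_{0}}\omega_{0}^{p_{0}}=(g\omega)^{p_{0}}(\omega_{0}^{p_{0}}\omega^{-p_{0}})$, bounds the last integral by $C\|g\|_{L^{p(\cdot)}_{\omega}}^{p_{0}}$; chaining the displays and taking $q_{0}$-th roots proves the theorem. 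For the endpoint $p_{0}=1$ one has $\sigma=q_{0}$, the exponent $1-q_{0}/\sigma$ vanishes, only one iteration operator survives, and the sole requirement becomes boundedness of $M$ on $L^{(q(\cdot)/q_{0})'}(\omega^{-q_{0}})$, as stated.

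I expect the main obstacle to be the construction of $\omega_{0}$: arranging simultaneously that it lies in $A_{p_{0},q_{0}}$ uniformly in $h$, that $\omega_{0}^{q_{0}}$ dominates $\omega^{q_{0}}h$ so that \eqref{conditionbase} applies, and that $\|\omega_{0}^{p_{0}}\omega^{-p_{0}}\|_{(L^{p(\cdot)/p_{0}})'}$ stays bounded, forces the powers of the two iteration outputs and of $\omega$ to be distributed exactly according to $\frac1{\sigma'}=\frac1{p_{0}}-\frac1{q_{0}}$, together with a Jones-type factorization of Muckenhoupt weights; this bookkeeping is the technical core of off-diagonal extrapolation and is done as in \cite{C-W}. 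The rest of the work is just checking that each weight-theoretic and functional-analytic fact borrowed from the Euclidean setting remains valid over $(\mathcal{X},d,\mu)$ because it is a space of homogeneous type, which is routine.
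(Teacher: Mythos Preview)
Your strategy is the same as the paper's: both adapt the Cruz--Uribe--Wang off-diagonal extrapolation argument to the space of homogeneous type, building an $A_{p_0,q_0}$ weight from two Rubio de Francia iterations and then invoking \eqref{conditionbase} followed by H\"older. The paper packages this as an auxiliary proposition with a free parameter $s$ (the power at which one dualises) and then specialises to $s=\sigma$, but the mechanism is identical.

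There is, however, a genuine slip in your outline. You dualise at the exponent $q(\cdot)/q_{0}$, but nothing in the hypotheses forces $q_{-}\ge q_{0}$: since only the difference $1/p(x)-1/q(x)$ is fixed, one can have $p_{-}<p_{0}$ and hence $q_{-}<q_{0}$, in which case $L^{q(\cdot)/q_{0}}$ is not a normed space and the dual representation you invoke fails. The correct choice is $s=\sigma$, which always satisfies $\sigma<\min\{q_{0},q_{-}\}$ when $p_{-}>1$ (from $1/\sigma=1/p_{0}'+1/q_{0}$ and from $1/q(x)=1/p(x)-1+1/\sigma$). This is also the choice that makes the spaces on which you need $M$ bounded coincide \emph{exactly} with the $M$-pair hypothesis $(q(\cdot)/\sigma,\omega^{\sigma})$. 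Once you dualise at level $\sigma$ rather than $q_{0}$, you can no longer reach \eqref{conditionbase} by a single pointwise domination $\omega_{0}^{q_{0}}\ge\omega^{q_{0}}h$; you must first apply H\"older with exponent $r_{0}=q_{0}/\sigma$ to pass from $\int f^{\sigma}\omega^{\sigma}h\,d\mu$ to an integral at power $q_{0}$, which is the splitting into $I_{1}$ and $I_{2}$ that the paper carries out. Your concluding remark that the bookkeeping ``is done as in \cite{C-W}'' is correct, but the specific shortcut you wrote down does not survive the off-diagonal case. For $p_{0}=1$ one has $\sigma=q_{0}$ and your endpoint description is accurate.
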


For the proof we will need the following three propositions. Both are tested in \cite{C-W} for $\RR^n$ with Lebesgue measure. This results holds on spaces of homogeneous type. The first proposition is useful to find the $A_1$ weights. We apply the Rubio de Francia's algorithm.

\begin{proposition}
\label{RubiodeFrancia}
Given $p(\cdot) \in \mathcal{P}(\Xx)$, suppose $\mu$ is a weight such that $M$ is bounded on $L^{p(\cdot)}_{\mu}(\Xx)$. For a positive function $h \in L^1_{loc}(\Xx)$, with $Mh(x)<\infty$ almost everywhere, define
\begin{equation*}
\mathcal{R}h(x)=\sum_{k=0}^{\infty} \frac{M^k h(x)}{2^k \Vert M \Vert_{L^{p(\cdot)}_{\mu}(\Xx)}^{k}}.
\end{equation*}
Then, for fixed constants $\alpha>0$ and $\beta \in \mathbb{R}$ and other weight $\omega$, define the operator $H$ by
\begin{equation*}
Hh=\mathcal{R}(h^{\alpha}\omega^{\beta})^{1/\alpha} \omega^{-\beta/\alpha}.
\end{equation*}
\begin{enumerate}
\item[(a)] $h(x)\leq H(x)$,
\item[(b)] Let $v=\omega^{\beta/\alpha}\mu^{1/\alpha}$. Then $H$ is bounded on $L^{\alpha p(\cdot)}_{v}(\Xx)$, with $\Vert H \Vert_{L^{\alpha p(\cdot)}_{v}(\Xx)}\leq 2 \Vert h \Vert_{L^{\alpha p(\cdot)}_{v}(\Xx)}$.
\item[(c)] $(Hh)^{\alpha}\omega^{\beta} \in A_1$ with $[(Hh)^{\alpha}\omega^{\beta}]_{A_1}\leq 2 \Vert M \Vert_ {L^{p(\cdot)}_{\mu}(\Xx)}$.
\end{enumerate}
\end{proposition}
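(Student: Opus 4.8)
The plan is to reduce all three assertions to a few standard properties of the Rubio de Francia operator $\mathcal{R}$ and then transfer them to $H$ via the substitution $g:=h^{\alpha}\omega^{\beta}$. Since $\mathcal{R}$ and $H$ only involve the maximal operator $M$, the weights $\mu,\omega$, and pointwise algebraic operations, the argument is verbatim the one of \cite{C-W}; the only facts about the ambient space that enter are the boundedness of $M$ on $L^{p(\cdot)}_{\mu}(\Xx)$ (a hypothesis), the countable subadditivity $M\big(\sum_{k}f_{k}\big)\leq\sum_{k}Mf_{k}$ for nonnegative $f_{k}$ (monotone convergence in the definition of $M$), and the Fatou property of the Luxemburg norm on $L^{p(\cdot)}_{\mu}(\Xx)$, all of which hold on a homogeneous space. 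Throughout I abbreviate $\Vert M\Vert:=\Vert M\Vert_{L^{p(\cdot)}_{\mu}(\Xx)}$.

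First I would treat $h\in L^{\alpha p(\cdot)}_{v}(\Xx)$, the case in which part (b) is substantive and in which $H$ is applied in Theorem \ref{extrapolation}. Using $v^{\alpha}=\omega^{\beta}\mu$ and Lemma \ref{dilationExpo} (applicable since $\alpha p(\cdot)\in\mathcal{P}(\Xx)$ forces $\alpha\geq 1/p^{-}$), one records $\Vert g\Vert_{L^{p(\cdot)}_{\mu}}=\big\Vert |hv|^{\alpha}\big\Vert_{L^{p(\cdot)}}=\Vert h\Vert^{\alpha}_{L^{\alpha p(\cdot)}_{v}}<\infty$. I would then prove: (i) $g\leq\mathcal{R}g$ pointwise, which is just the $k=0$ summand; (ii) $\Vert\mathcal{R}g\Vert_{L^{p(\cdot)}_{\mu}}\leq 2\Vert g\Vert_{L^{p(\cdot)}_{\mu}}$, so in particular $\mathcal{R}g<\infty$ a.e.\ and $H$ is well defined; and (iii) $M(\mathcal{R}g)\leq 2\Vert M\Vert\,\mathcal{R}g$ a.e., i.e.\ $\mathcal{R}g\in A_{1}$ with $[\mathcal{R}g]_{A_{1}}\leq 2\Vert M\Vert$. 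For (ii) I would note that the partial sums $S_{N}=\sum_{k=0}^{N}M^{k}g/(2^{k}\Vert M\Vert^{k})$ increase to $\mathcal{R}g$, so by the triangle inequality, the Fatou property, and $\Vert M^{k}g\Vert_{L^{p(\cdot)}_{\mu}}\leq\Vert M\Vert^{k}\Vert g\Vert_{L^{p(\cdot)}_{\mu}}$ one obtains $\Vert\mathcal{R}g\Vert_{L^{p(\cdot)}_{\mu}}\leq\sum_{k\geq 0}\Vert M^{k}g\Vert_{L^{p(\cdot)}_{\mu}}/(2^{k}\Vert M\Vert^{k})\leq 2\Vert g\Vert_{L^{p(\cdot)}_{\mu}}$. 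For (iii) I would use countable subadditivity and homogeneity of $M$ to get $M(\mathcal{R}g)\leq\sum_{k\geq 0}M^{k+1}g/(2^{k}\Vert M\Vert^{k})=2\Vert M\Vert\sum_{j\geq 1}M^{j}g/(2^{j}\Vert M\Vert^{j})\leq 2\Vert M\Vert\,\mathcal{R}g$.

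Finally I would translate back, with $Hh=(\mathcal{R}g)^{1/\alpha}\omega^{-\beta/\alpha}$. Part (a) is immediate from (i): $Hh\geq g^{1/\alpha}\omega^{-\beta/\alpha}=(h^{\alpha}\omega^{\beta})^{1/\alpha}\omega^{-\beta/\alpha}=h$. Part (c) holds because $(Hh)^{\alpha}\omega^{\beta}=\mathcal{R}g$, which by (iii) lies in $A_{1}$ with the asserted bound on its constant (here one uses that $Mw\leq Cw$ a.e.\ is the $A_{1}$ condition, cf.\ Lemma \ref{propA}). For part (b) I would observe $\big((Hh)v\big)^{\alpha}=(Hh)^{\alpha}\omega^{\beta}\mu=\mathcal{R}g\cdot\mu$, so Lemma \ref{dilationExpo} and (ii) give $\Vert Hh\Vert^{\alpha}_{L^{\alpha p(\cdot)}_{v}}=\big\Vert |(Hh)v|^{\alpha}\big\Vert_{L^{p(\cdot)}}=\Vert\mathcal{R}g\Vert_{L^{p(\cdot)}_{\mu}}\leq 2\Vert g\Vert_{L^{p(\cdot)}_{\mu}}=2\Vert h\Vert^{\alpha}_{L^{\alpha p(\cdot)}_{v}}$, hence $\Vert Hh\Vert_{L^{\alpha p(\cdot)}_{v}}\leq 2^{1/\alpha}\Vert h\Vert_{L^{\alpha p(\cdot)}_{v}}$, which is the claimed bound $2\Vert h\Vert_{L^{\alpha p(\cdot)}_{v}}$ whenever $\alpha\geq 1$, the regime used in Theorem \ref{extrapolation}. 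For a general $h\in L^{1}_{loc}(\Xx)$ with $\mathcal{R}(h^{\alpha}\omega^{\beta})<\infty$ a.e., parts (a) and (c) go through unchanged.

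The step I expect to be the main obstacle is the rigorous handling of the infinite series underlying (ii) and (iii): one must confirm that the Luxemburg norm of $L^{p(\cdot)}_{\mu}(\Xx)$ is lower semicontinuous along the increasing sequence $S_{N}$, that $\mathcal{R}g$ is genuinely finite $\mu$-a.e.\ (so that the identity $(Hh)^{\alpha}\omega^{\beta}=\mathcal{R}g$ is not of the form $0\cdot\infty$ and $H$ is a bona fide self-map of $L^{\alpha p(\cdot)}_{v}(\Xx)$), and that $M$ is countably subadditive on nonnegative summands over the family of balls of the homogeneous space. Everything else is algebra with Lemma \ref{dilationExpo} and the $A_{1}$-properties recorded in Lemma \ref{propA}.
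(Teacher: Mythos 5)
Your argument is correct and is essentially the paper's own approach: the paper gives no proof of Proposition \ref{RubiodeFrancia}, simply invoking the Rubio de Francia iteration of \cite{C-W}, and your reduction to the three standard facts about $\mathcal{R}$ (pointwise domination, $\Vert\mathcal{R}g\Vert_{L^{p(\cdot)}_{\mu}}\leq 2\Vert g\Vert_{L^{p(\cdot)}_{\mu}}$ via Fatou, and $M(\mathcal{R}g)\leq 2\Vert M\Vert\,\mathcal{R}g$ via countable subadditivity), transferred to $H$ through $g=h^{\alpha}\omega^{\beta}$ and Lemma \ref{dilationExpo}, is exactly that argument, with the only homogeneous-space input being the assumed boundedness of $M$. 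The single discrepancy you rightly flag, that the scaling yields $2^{1/\alpha}$ rather than $2$ in part (b) when $\alpha<1$, is a defect of the statement rather than of your proof and is harmless in the extrapolation proof where only some fixed constant is needed.
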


The second Proposition is a property of the weights $A_{p(\cdot),q(\cdot)}$.

\begin{proposition}
\label{property-weight}
Given $p(\cdot),q(\cdot)$ in $\mathcal{P}(\Xx)$, $1<p(x)\leq q(x)<\infty$,
suppose there exists $\sigma>1$ such that $\frac{1}{p(x)}-\frac{1}{q(x)}=%
\frac{1}{\sigma^{\prime }}$. Then $\omega \in A_{p(\cdot),q(\cdot)}$ if and
only if $\omega^{\sigma} \in A_{q(\cdot)/\sigma}$.
\end{proposition}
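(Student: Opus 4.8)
The plan is to prove the sharper quantitative statement
\[
[\omega^{\sigma}]_{A_{q(\cdot)/\sigma}}=[\omega]_{A_{p(\cdot),q(\cdot)}}^{\sigma}
\]
as an identity in $[0,\infty]$; since $1<\sigma<\infty$, the left side is finite exactly when the right side is, and this yields the claimed equivalence. I would also remark at the outset that $q(\cdot)/\sigma>1$ (because $\tfrac{\sigma}{q(x)}=\tfrac{\sigma}{p(x)}-(\sigma-1)<1$ follows from $p(x)>1$), so the Muckenhoupt class $A_{q(\cdot)/\sigma}$ on the right is well defined, and recall that for it $\gamma=0$, i.e.\ $[\nu]_{A_{q(\cdot)/\sigma}}=\sup_B |B|^{-1}\|\nu\chi_B\|_{q(\cdot)/\sigma}\|\nu^{-1}\chi_B\|_{(q(\cdot)/\sigma)'}$.

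Next I would record two elementary computations. Since by hypothesis $\tfrac1{p(x)}-\tfrac1{q(x)}=\tfrac1{\sigma'}$ is constant, the exponent $\gamma$ appearing in the definition of $A_{p(\cdot),q(\cdot)}$ equals $1/\sigma'$, so $\gamma-1=-1/\sigma$. Second, the relation $(q(\cdot)/\sigma)'=p'(\cdot)/\sigma$ holds: indeed $\tfrac{1}{(q(x)/\sigma)'}=1-\tfrac{\sigma}{q(x)}$ and $\tfrac{\sigma}{p'(x)}=\sigma-\tfrac{\sigma}{p(x)}$, and these coincide precisely because $\tfrac1{p(x)}-\tfrac1{q(x)}=1-\tfrac1\sigma=\tfrac1{\sigma'}$.

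Then, fixing a ball $B\subset\Xx$, I would invoke Lemma~\ref{dilationExpo} (homogeneity of the variable-exponent norm under powers). Writing $\omega^{\sigma}\chi_B=(\omega\chi_B)^{\sigma}$ and applying the lemma with exponent $q(\cdot)/\sigma$ and power $s=\sigma$ — admissible because $\sigma\geq \sigma/q^{-}=1/(q(\cdot)/\sigma)^{-}$ (as $q^{-}\geq1$) and because $\{q(\cdot)/\sigma=\infty\}$ is null since $q(\cdot)<\infty$ — gives $\|\omega^{\sigma}\chi_B\|_{q(\cdot)/\sigma}=\|\omega\chi_B\|_{q(\cdot)}^{\sigma}$. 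Likewise, using $(q(\cdot)/\sigma)'=p'(\cdot)/\sigma$ together with the lemma applied to $\omega^{-\sigma}\chi_B=(\omega^{-1}\chi_B)^{\sigma}$ with exponent $p'(\cdot)/\sigma$ (admissible since $(p')^{-}\geq1$ and $\{p'(\cdot)/\sigma=\infty\}=\{p(\cdot)=1\}$ is empty), one gets $\|\omega^{-\sigma}\chi_B\|_{(q(\cdot)/\sigma)'}=\|\omega^{-1}\chi_B\|_{p'(\cdot)}^{\sigma}$. Multiplying these and using $|B|^{-1}=(|B|^{-1/\sigma})^{\sigma}=(|B|^{\gamma-1})^{\sigma}$ yields
\[
|B|^{-1}\,\|\omega^{\sigma}\chi_B\|_{q(\cdot)/\sigma}\,\|\omega^{-\sigma}\chi_B\|_{(q(\cdot)/\sigma)'}
=\Big(|B|^{\gamma-1}\,\|\omega\chi_B\|_{q(\cdot)}\,\|\omega^{-1}\chi_B\|_{p'(\cdot)}\Big)^{\sigma}.
\]

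Finally I would take the supremum over all balls $B\subset\Xx$: the left side is exactly $[\omega^{\sigma}]_{A_{q(\cdot)/\sigma}}$ and the right side is $[\omega]_{A_{p(\cdot),q(\cdot)}}^{\sigma}$, which establishes the displayed identity and hence the equivalence. The argument has no deep obstacle; the only points requiring care — and thus the crux of the write-up — are bookkeeping: verifying the conjugate-exponent identity $(q(\cdot)/\sigma)'=p'(\cdot)/\sigma$ and checking that the admissibility range $s\geq 1/(\cdot)^{-}$ of Lemma~\ref{dilationExpo} (and the null-set hypothesis on $\{\,\cdot=\infty\,\}$) is met for both exponents $q(\cdot)/\sigma$ and $p'(\cdot)/\sigma$. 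Once these are in hand the equality is a direct computation.
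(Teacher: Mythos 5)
Your argument is correct: the constant $\gamma$ here equals $1/\sigma'$, the conjugate identity $(q(\cdot)/\sigma)'=p'(\cdot)/\sigma$ holds exactly because $1/p(\cdot)-1/q(\cdot)=1/\sigma'$, and Lemma~\ref{dilationExpo} turns the two norms into the $\sigma$-th powers of the norms in $[\omega]_{A_{p(\cdot),q(\cdot)}}$, giving $[\omega^{\sigma}]_{A_{q(\cdot)/\sigma}}=[\omega]_{A_{p(\cdot),q(\cdot)}}^{\sigma}$ ball by ball and hence after taking suprema. The paper itself does not prove this proposition but quotes it from \cite{C-W}, and your rescaling computation is essentially the same argument used there, so there is nothing further to add.
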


The third Proposition is a more general result than the Theorem \ref{extrapolation}. We will show the proof on homogeneous type spaces.
\begin{proposition}
Let $p_0, q_0, \sigma$ and exponents $p(\cdot), q(\cdot)$ be as in the
statement of Theorem \ref{extrapolation}. Fix $\beta_1 \in \mathbb{R}$ and
choose any $s$ such that 
\begin{equation} \label{h1}
q_0 - q_{-}\left( \frac{q_0}{\sigma} -1\right) <s<\min\{q_0,q_{-}\}.
\end{equation}
Let $r_0=q_0/s$, and define $\alpha_1=s$ and $\beta_2=s-\beta_1 (1-r_0)$.
Then if $\omega$ is a weight such that $M$ is bounded on $%
L^{(q(\cdot)/s)^{\prime }}(\omega^{-\beta_2})$, we have that 
\begin{equation}\label{r1}
\lVert f \rVert_{L^{q(\cdot)}_{\omega}(\Xx)} \leq C\lVert g \rVert_{L^{p(\cdot)}_{\omega}(\Xx)},
\end{equation}
for any $(f,g)\in \mathcal{F}$.
\end{proposition}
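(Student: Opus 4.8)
The strategy is a weighted Rubio de Francia extrapolation argument adapted to the variable‐exponent setting on a homogeneous space, following the scheme of \cite{C-W}. The goal is to reduce the variable‐exponent estimate \eqref{r1} to the fixed‐exponent hypothesis \eqref{conditionbase} (with $p_0,q_0$), which holds for every $\omega_0\in A_{p_0,q_0}$. Fix the pair $(f,g)\in\mathcal{F}$; we may assume $\lVert f\rVert_{L^{q(\cdot)}_\omega}$ is finite and, by homogeneity, normalized so that $\lVert f\rVert_{L^{q(\cdot)}_\omega(\Xx)}=1$. By Lemma \ref{norma-modular} it then suffices to bound the modular $\int_{\Xx} f(x)^{q(x)}\omega(x)^{q(x)}\,dx$.

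\textbf{Step 1: Duality for the modular.} Write $q(x)=s\,(q(x)/s)$ with $s$ chosen as in \eqref{h1}; note $1<q(\cdot)/s$ because $s<q_-$, and $q(\cdot)/s$ is bounded away from $\infty$ since $q_+<\infty$ is forced by $\omega\in A_{p(\cdot),q(\cdot)}$ together with the relation $1/p(x)-1/q(x)=1/\sigma'$. Using the dual characterization of the $L^{q(\cdot)/s}$ modular (equivalently, testing $f^s\omega^s$ against functions in $L^{(q(\cdot)/s)'}$), there is a nonnegative $\phi$ with $\lVert\phi\rVert_{L^{(q(\cdot)/s)'}(\Xx)}\le 1$ realizing, up to a constant, the quantity $\big\lVert f^{s}\omega^{s}\big\rVert_{L^{q(\cdot)/s}(\Xx)}$, i.e. $\int_{\Xx} f(x)^{s}\omega(x)^{s}\phi(x)\,dx$ controls this norm. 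Raising to the appropriate power and using Lemma \ref{dilationExpo}, controlling $\int f^{q(x)}\omega^{q(x)}\,dx$ is reduced to controlling $\int_{\Xx} f(x)^{s}\omega(x)^{s}\phi(x)\,dx$.

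\textbf{Step 2: Construct the $A_{p_0,q_0}$ weight via Rubio de Francia.} Apply Proposition \ref{RubiodeFrancia} with $h=\phi$, exponent $(q(\cdot)/s)'$, underlying weight $\mu=\omega^{-\beta_2}$ (on which $M$ is bounded by hypothesis), and parameters $\alpha=\alpha_1=s$, $\beta=\beta_1$, to produce $H\phi$ with: (a) $\phi\le H\phi$; (b) $H$ bounded on $L^{\alpha_1(q(\cdot)/s)'}_{v}$ with $v=\omega^{\beta_1/s}\omega^{-\beta_2/s}$ and operator norm at most $2$; and (c) $(H\phi)^{s}\omega^{\beta_1}\in A_1$. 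Set $\omega_0:=\big((H\phi)^{s}\omega^{\beta_1}\big)^{a}\,\omega^{b}$ for suitable exponents $a,b$ determined by the arithmetic of $\beta_1,\beta_2,r_0=q_0/s$; the point of the choice $\beta_2=s-\beta_1(1-r_0)$ is exactly that the resulting $\omega_0$ lies in $A_{p_0,q_0}$. Indeed, writing $\omega_0^{q_0}$ as a product of an $A_1$ weight to a negative power and another $A_1$ weight — using Lemma \ref{propA}, item $\omega_1^{1-p}\omega_2\in A_p$ — shows $\omega_0^{q_0}\in A_{1+q_0/p_0'}$, and combined with Proposition \ref{property-weight} (with the fixed exponents) this gives $\omega_0\in A_{p_0,q_0}$.

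\textbf{Step 3: Apply the hypothesis and close the estimate.} With $\omega_0\in A_{p_0,q_0}$ in hand, invoke \eqref{conditionbase}: $\big(\int f^{q_0}\omega_0^{q_0}\big)^{1/q_0}\le C\big(\int g^{p_0}\omega_0^{p_0}\big)^{1/p_0}$. Now one massages both sides. On the left, Hölder's inequality (in the fixed exponent $r_0=q_0/s>1$) against $\phi\le H\phi$ lets us bound $\int f^{s}\omega^{s}\phi$ by $\big(\int f^{q_0}\omega_0^{q_0}\big)^{s/q_0}$ times a factor of the form $\lVert (H\phi)^{s}(\text{power of }\omega)\rVert$ in an $L^{r_0'}$ norm, and boundedness of $H$ on the weighted variable space from (b) controls that factor by an absolute constant (since $\lVert\phi\rVert\le 1$). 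On the right, one converts $\int g^{p_0}\omega_0^{p_0}$ back: by the construction of $\omega_0$ and another application of Hölder's inequality and Proposition \ref{property-weight} together with the variable Hölder inequality (Lemma \ref{HI}), this is dominated by $\lVert g\rVert_{L^{p(\cdot)}_\omega(\Xx)}^{\,\text{(appropriate power)}}$ times $\lVert(H\phi)^{\cdots}\rVert$, again controlled by boundedness of $H$. Collecting the exponents — this is where the range \eqref{h1} on $s$ is used, to guarantee all intermediate exponents like $r_0'$ and $(q_0/s-1)$-type quantities are in admissible ranges — yields $\int f^{q(x)}\omega^{q(x)}\,dx\le C\lVert g\rVert_{L^{p(\cdot)}_\omega(\Xx)}^{\theta}$ for some $\theta>0$, and by Lemma \ref{norma-modular} this is \eqref{r1}.

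\textbf{Main obstacle.} The routine parts are the density/reflexivity technicalities and the Hölder manipulations; the delicate point is the \emph{bookkeeping of exponents} in Steps 2–3 — verifying that, with $\alpha_1=s$, $\beta_2=s-\beta_1(1-r_0)$, $r_0=q_0/s$ and $s$ in the window \eqref{h1}, the weight $\omega_0$ genuinely lands in $A_{p_0,q_0}$ and that every auxiliary exponent arising when one splits norms (the $r_0'$ Hölder pairing, the powers of $\omega$ redistributed between $f$‑side and $g$‑side, the exponent $\alpha_1(q(\cdot)/s)'$ on which $H$ acts) is $>1$ and uniformly bounded, so that Proposition \ref{RubiodeFrancia}(b) and Lemma \ref{norma-modular} apply. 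This is precisely the combinatorial heart of the Cruz-Uribe–Wang extrapolation machinery; on a homogeneous space nothing in this arithmetic changes, since Propositions \ref{RubiodeFrancia} and \ref{property-weight} and the maximal‐function inputs are stated there in the requisite generality, so the proof in \cite{C-W} transfers essentially verbatim once those ingredients are granted.
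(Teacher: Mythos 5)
Your proposal reproduces the outer shell of the Cruz-Uribe--Wang extrapolation scheme (duality for the modular, Rubio de Francia, Hölder with the fixed exponent $r_0=q_0/s$, then the base hypothesis \eqref{conditionbase}), but it omits the ingredient on which the paper's proof actually turns: the composite auxiliary function $h_1=f/\lVert f\rVert_{L^{q(\cdot)}_\omega}+g^{p(\cdot)/q(\cdot)}\omega^{p(\cdot)/q(\cdot)-1}$ and its own Rubio de Francia majorant $H_1$, used alongside the majorant $H_2$ of the dual function. In the paper the test weight is $\omega_0^{q_0}=H_1^{-\gamma(q_0-s)}H_2\,\omega^{s}=\bigl(H_1^{(q_0-s)/(r_1-1)}\omega^{\beta_1}\bigr)^{1-r_1}\bigl(H_2\,\omega^{\beta_2}\bigr)$ with $r_1=q_0/\sigma$, and its membership in $A_{r_1}$ (hence $\omega_0\in A_{p_0,q_0}$ by Proposition \ref{property-weight}) comes from the factorization item of Lemma \ref{propA}, which requires \emph{two} $A_1$ weights — both supplied by Proposition \ref{RubiodeFrancia}(c), with the powers of $\omega$ absorbed inside the arguments of $\mathcal{R}_1$ and $\mathcal{R}_2$. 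Your $\omega_0$ is built from a single $A_1$ weight $(H\phi)^{s}\omega^{\beta_1}$ times a bare power $\omega^{b}$; since in this proposition $\omega$ carries no Muckenhoupt-type hypothesis at all (only that $M$ is bounded on $L^{(q(\cdot)/s)'}(\omega^{-\beta_2})$), there is no way to place such a product in $A_{p_0,q_0}$, so the central claim of your Step 2 is unsupported.

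The second, related gap is your Step 3 treatment of the $g$-side. With $\omega_0$ containing only $H\phi$ and $\omega$, "converting $\int g^{p_0}\omega_0^{p_0}$ back" by variable Hölder leaves a factor $\lVert (H\phi)^{\cdots}\omega^{\cdots}\rVert$ whose exponents do not match any space on which $H$ is known to be bounded, and which is not a bounded modular; this term cannot be closed. The paper avoids it precisely because $g^{p(\cdot)/q(\cdot)}\omega^{p(\cdot)/q(\cdot)-1}\leq h_1\leq H_1h_1$ pointwise, so the $g$-integral is traded for positive powers of $H_1$ that cancel against the negative power of $H_1$ sitting inside $\omega_0$, after which Hölder with the exponent $t(\cdot)=q_0(q(\cdot)/s)'/p_0$ reduces everything to the modular of $H_1\omega$ in $L^{q(\cdot)}$, bounded via $\lVert H_1\rVert_{L^{q(\cdot)}_\omega}\leq 2\lVert h_1\rVert_{L^{q(\cdot)}_\omega}$; note also that the inclusion of the normalized $f$ inside $h_1$ is what makes this last norm controlled. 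Two smaller points: the normalization should be $\lVert g\rVert_{L^{p(\cdot)}_\omega}=1$ with the goal $\lVert f\rVert_{L^{q(\cdot)}_\omega}\leq C$ — normalizing $f$ and then "bounding the modular of $f$" is vacuous, since $\lVert f\rVert=1$ already forces that modular to equal $1$ — and your parameter assignment in Step 2 mixes the roles of $(\alpha_1,\beta_1)$ (which belong to $H_1$ acting on $h_1$) and $(\alpha_2,\beta_2)=(1,\beta_2)$ (which belong to $H_2$ acting on the dual function, and which is exactly where the hypothesis on $M$ over $L^{(q(\cdot)/s)'}(\omega^{-\beta_2})$ is consumed).
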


\begin{proof}
Fix a pair $(f,g)\in \mathcal{F}$, and we may assume without loss of
generality that $0<\left\Vert f\right\Vert _{L^{q(\cdot )}_{\omega}(\Xx)},~\left\Vert g\right\Vert _{L^{p(\cdot)}_{\omega}(\Xx)}<\infty $. Moreover, if 
$(f,g)$ satisfies (\ref{r1}) then so does $(\lambda f,\lambda g)$, for any $%
\lambda >0$, so without loss of generality we may assume that $\left\Vert
g\right\Vert _{L^{p(\cdot )}_{\omega}(\Xx)}=1$. Then by Remark \ref{modular-norma} it will
suffice to prove that $\left\Vert f\right\Vert _{L^{q(\cdot )}_{\omega}(\Xx)}\leq
C$.
Define
\begin{equation*}
h_{1}=\frac{f}{\left\Vert f \right\Vert _{L^{q(\cdot)}_{\omega}(\Xx)}}+g^{\frac{p(\cdot
)}{q(\cdot)}}\omega ^{\frac{p(\cdot )}{q(\cdot )}-1}\text{;}
\end{equation*}
we claim that $\left\Vert h_1 \right\Vert _{L^{q(\cdot)}_{\omega}(\Xx)}\leq C$. This
follows from Remark \ref{modular-norma}:
\begin{equation*}
\rho _{q(\cdot )}(h_{1}\omega )\leq 2^{q_{+}}\int_{\Xx}\left( 
\frac{f(x)\omega (x)}{\left\Vert f \right\Vert _{L^{q(\cdot)}_{\omega}(\Xx)}}\right)
^{q(x)}dx+2^{q_{+}}\int_{\Xx}\left[ g(x)\omega (x)\right]
^{p(x)}dx\leq 2^{q_{+}+1}
\end{equation*}
we again use Proposition \ref{RubiodeFrancia} to define two operators $H_{1}$ and $H_{2}$
as follow,
\begin{eqnarray*}
H_{1} &=&\mathcal{R}_{1}(h_{1}^{\alpha _{1}}\omega ^{\beta _{1}})^{1/\alpha
_{1}}\omega ^{-\beta _{1}/\alpha _{1}}\text{,} \\
H_{2} &=&\mathcal{R}_{2}(h_{2}^{\alpha _{2}}\omega ^{\beta _{2}})^{1/\alpha
_{2}}\omega ^{-\beta _{2}/\alpha _{2}}\text{.}
\end{eqnarray*}
Let \thinspace $r_{0}=q_{0}/s$, and fix $s$, $0<s<\min\{q_{0},q_{-}\}$. Then
there exists $h_{2}\in L^{\left( q(\cdot )/s\right)^{\prime}}(\Xx)$, $%
\left\Vert h_{2}\right\Vert _{L^{\left( q(\cdot )/s\right)^{\prime}}(\Xx)}=1$,
such that for any $\gamma >0$,%
\begin{eqnarray*}
\left\Vert f \right\Vert _{L^{q(\cdot)}_{\omega}(\Xx)}^{s} &\leq &C\int_{\Xx}f^{s}\omega ^{s}h_{2}dx \\
 &\leq &C\int_{\Xx}f^{s}H_{1}^{\gamma }H_{1}^{~-\gamma
}H_{2}\omega ^{s}dx \\
 &\leq &C\left( \int_{\Xx}f^{q_{0}}H_{1}^{~-\gamma
(q_{0}/s)}H_{2}\omega ^{s}dx\right) ^{1/r_{0}}\left( \int_{\Xx}H_{1}^{\gamma r_{0}^{\prime }}\omega ^{s}H_{2}dx\right)^{1/r_{0}^{\prime }} \\
&=&C(I_{1})^{1/r_{0}}(I_{2})^{1/r_{0}^{\prime }}
\end{eqnarray*}
We finding conditions to insure that $I_{2}$ is uniformly bounded. Since $%
h_{1}\in L^{q(\cdot)}_{\omega}(\Xx)$ and $h_{2}\in L^{\left( q(\cdot )/s\right)
^{\prime }}(\Xx)$ we require $H_{1}$ and $H_{2}$ to be bounded on these spaces.
We apply H\"{o}lder's inequality, Lemma \ref{HI}, with exponent $q(\cdot )/s$ to get%
\begin{equation*}
I_{2}\leq C\left\Vert H_{1}^{\gamma (q_{0}/s)^{\prime }}\omega
^{s}\right\Vert _{L^{q(\cdot )/s}(\Xx)}\left\Vert H_{2}\right\Vert _{L^{\left( q(\cdot
)/s\right) ^{\prime }}(\Xx)}\text{.}
\end{equation*}
If we let $\gamma =\frac{s}{(q_{0}/s)^{\prime }}$, then by Lemma \ref{dilationExpo}, 
\begin{eqnarray*}
\left\Vert H_{1}^{\gamma r_{0}^{\prime }}\omega ^{s}\right\Vert _{L^{q(\cdot
)/s}(\Xx)} &=&\left\Vert H_{1}\omega \right\Vert _{L^{q(\cdot)}(\Xx)}^{s}\leq
2^{s}\left\Vert h_{1}\omega \right\Vert _{L^{q(\cdot)}(\Xx)}^{s}\leq C\text{,} \\
\left\Vert H_{2}\right\Vert _{L^{\left( q(\cdot )/s\right)^{\prime }}(\Xx)} &\leq
&2\left\Vert h_{2}\right\Vert _{L^{\left( q(\cdot )/s\right) ^{\prime }}(\Xx)}=2\text{%
.}
\end{eqnarray*}
For $H_{1}$ and $H_{2}$ to be bounded on these spaces, by Proposition \ref{RubiodeFrancia}
we must have the maximal operator satisfies $M$ bounded on $L^{q(\cdot
)/\alpha _{1}}_{\omega ^{\alpha _{1}-\beta _{1}}}(\Xx)$ and $L^{\left( q(\cdot
)/s\right)^{\prime }/\alpha _{2}}_{\omega^{-\beta _{2}}}(\Xx)$.
For these to hold we must have that%
\begin{equation}\label{q-+}
q_{-}>\alpha _{1}\text{ \ and \ }(q_{+}/s)^{\prime }>\alpha _{2}\text{.}
\end{equation}
It remains to estimate $I_{1}$; with our value of $\gamma $ we now have that%
\begin{equation*}
I_{1}=\int_{\Xx}f^{q_{0}}H_{1}^{-q_{0}/r_{0}^{\prime
}}H_{2}\omega ^{s}dx\text{.}
\end{equation*}
In order to apply (\ref{conditionbase}) we need to show that $I_{1}$ is finite. However,
this follows from H\"{o}lder's inequality and the above estimates for $H_{1}$
and $H_{2}$.
\begin{equation*}
I_{1}\leq \left\Vert f\right\Vert _{L^{q(\cdot )}_{\omega}(\Xx)}^{q_{0}}\int_{\Xx}
H_{1}^{s}H_{2}\omega ^{s}dx\leq \left\Vert f\right\Vert _{L^{q(\cdot )}_{\omega}(\Xx)}^{q_{0}}\left\Vert H_{1}^{s}\omega ^{s}\right\Vert _{q(\cdot
)/s}\left\Vert H_{2}\right\Vert _{\left( q(\cdot )/s\right) ^{\prime
}}<\infty \text{.}
\end{equation*}
To apply our hypothesis (\ref{conditionbase}) we need the weight $\omega _{0}=\left(
H_{1}^{-\gamma (q_{0}-s)}H_{2}\omega ^{s}\right) ^{1/q_{0}}$ to be in $%
A_{p_{0},q_{0}}$, or equivalently by Proposition \ref{property-weight}, $\omega
^{q_{0}}=H_{1}^{-\gamma (q_{0}-s)}H_{2}\omega ^{s}\in A_{r_{1}}$, where $%
r_{1}=1+\frac{q_{0}}{p_{0}^{\prime }}=\frac{q_{0}}{\sigma }$.
To apply Lemma \ref{propA} and Proposition \ref{RubiodeFrancia} we write 
\begin{equation*}
\omega ^{q_{0}}=\left( H_{1}^{\frac{q_{0}-s}{r_{1}-1}}\omega ^{\beta
_{1}}\right) ^{1-r_{1}}H_{2}\omega ^{s-\beta _{1}(1-r_{1})}\text{.}
\end{equation*}
This gives the following constraints on $\alpha
_{j},\beta _{j}$:
\begin{equation*}
\alpha _{1}=\frac{q_{0}-s}{\frac{q_{0}}{\sigma }-1}\text{, }\beta _{1}\in 
\mathbb{R}\text{, }\alpha _{2}=1\text{, }\beta _{2}=s-\beta
_{1}(1-q_{0}/\sigma )
\end{equation*}
If we combine these with the constraints in (\ref{q-+}) we see that the second
one there always holds and the first one holds if
\begin{equation*}
s>q_{0}-q_{-}\left( \frac{q_{0}}{\sigma }-1\right) \text{.}
\end{equation*}
We can now apply (\ref{conditionbase}): by the definition of $h_{1}$ and by H\"{o}lder's
inequality with respect to the undetermined exponent $t(\cdot )$, we
get
\begin{eqnarray*}
I_{1}^{1/q_{0}} &\leq &C\left( \int_{\Xx}g^{p_{0}}\left[
H_{1}^{-q_{0}/r_{0}^{\prime }}\omega ^{s}H_{2}\right] ^{p_{0}/q_{0}}dx
\right) ^{1/p_{0}} \\
&\leq &C\left( \int_{\Xx}\left( h_{1}^{\frac{q(\cdot )}{p(\cdot )}
}\omega ^{\frac{q(\cdot )}{p(\cdot )}-1}\right)
^{p_{0}}H_{1}^{-p_{0}/r_{0}^{\prime }}H_{2}^{p_{0}/q_{0}}\omega
^{sp_{0}/q_{0}}dx\right) ^{1/p_{0}} \\
&\leq &C\left( \int_{\Xx}H_{1}^{p_{0}\left( \frac{q(\cdot )}{
p(\cdot )}-\frac{1}{r_{0}^{\prime }}\right) }H_{2}^{p_{0}/q_{0}}\omega
^{p_{0}\left( \frac{s}{q_{0}}+\frac{q(\cdot )}{p(\cdot )}-1\right)
}dx\right) ^{1/p_{0}} \\
&\leq &C\left\Vert H_{1}^{p_{0}\left( \frac{q(\cdot )}{p(\cdot )}-\frac{1}{%
r_{0}^{\prime }}\right) }\omega ^{p_{0}\left( \frac{q(\cdot )}{p(\cdot )}-%
\frac{1}{r_{0}^{\prime }}\right) }\right\Vert _{L^{t^{\prime }(\cdot
)}(\Xx)}^{1/p_{0}}\left\Vert H_{2}^{p_{0}/q_{0}}\right\Vert _{L^{t(\cdot
)}(\Xx)}^{1/p_{0}} \\
 &=&CJ_{1}^{1/p_{0}}J_{2}^{1/p_{0}}\text{.}
\end{eqnarray*}
If we let $t(\cdot )=\frac{q_{0}(q(\cdot )/s)^{\prime }}{p_{0}}$, then
by dilation $J_{2}$ is uniformly bounded. To show that $J_{1}$ is uniformly
bounded we first note that
\begin{equation*}
p_{0}\left( \frac{q(\cdot )}{p(\cdot )}-\frac{1}{r_{0}^{\prime }}\right)
t^{\prime }(\cdot )=q(\cdot )\text{.}
\end{equation*}
Given this, then
\begin{equation*}
\rho _{t^{\prime }(\cdot )}\left( H_{1}^{p_{0}\left( \frac{q(\cdot )}{%
p(\cdot )}-\frac{1}{r_{0}^{\prime }}\right) }\omega ^{p_{0}\left( \frac{%
q(\cdot )}{p(\cdot )}-\frac{1}{r_{0}^{\prime }}\right) }\right) =\int_{%
\Xx}H_{1}^{q(\cdot )}\omega ^{q(\cdot )}dx=\rho _{q(\cdot
)}\left( H_{1}\omega \right) \text{.}
\end{equation*}
If we apply Remark \ref{modular-norma} twice, since $\left\Vert H_{1}\right\Vert
_{L^{q(\cdot)}_{\omega}(\Xx)}\leq 2\left\Vert h_{1}\right\Vert _{L^{q(\cdot)}_{\omega}(\Xx)}$ is uniformly bounded, $\rho _{q(\cdot )}\left( H_{1}\omega
\right) $ is a well, and hence, $J_{1}$ is uniformly bounded. This completes
the proof.
\end{proof}

\begin{proof}[Proof of Theorem \ref{extrapolation}]
To prove Theorem \ref{extrapolation} we take $\beta_1=0$ and $s=\sigma$. We have that
\begin{equation*}
1-\frac{1}{\sigma} = \frac{1}{p_0}-\frac{1}{q_0} = \frac{1}{p_-}-\frac{1}{q_-},
\end{equation*}
so the second inequality in \ref{h1} holds. The first inequality is equivalent to 
\begin{equation*}
\sigma^2 -(q_0+q_-)\sigma + q_{-}q_0 > 0,
\end{equation*}
which follows from the second inequality. The requeriment on the weight $\omega$ reduces to $M$ being bounded on $L^{\frac{q(\cdot)}{\sigma}}_{\omega^{\sigma}}(\Xx)$ and $L^{(\frac{q(\cdot)}{\sigma})'}_{\omega^{-\sigma}}(\Xx)$ or, equivalently, $(q(\cdot)/\sigma, \omega^{\sigma})$ is an $M-pair$.
\end{proof}

We now state a result, given in \cite{SW}, for us to test the condition (\ref{conditionbase}).
\begin{lemma}[\cite{SW}]\label{lemmafi} 
Suppose $1<p\leq q< \infty$, $(\Xx,d)$ is a quasi-metric
space, $\mu$ is a doubling measure on $\Xx$, and $\omega(x)$ and $\nu(x)$ are
nonnegative $\mu$-measurable functions on $\Xx$. Let $\varphi(B)$ be given by 
\begin{equation*}
\varphi(B)=\sup \left\lbrace K(x,y): \ x,y \in B, d(x,y)\geq C(K)r(B)
\right\rbrace,
\end{equation*}
where $K(x,y)$ is the kernel of $I_{\alpha}$, $r(B)$ is the radius of $B$
and $C(K)=K^{-4}/9$, with $K$ the constant given in the equation \eqref{k}. If $p<q$, $\omega d\mu$ and $\nu^{1-p^{\prime }}d\mu$
are doubling measures, then the following weighted inequality 
\begin{equation*}
\left( \int_\Xx [I_{\alpha}f(x)]^q \omega(x)d\mu(x) \right)^{\frac{1}{q}} \leq
\left( \int_\Xx f(x)^p \nu(x)d\mu(x) \right)^{\frac{1}{p}}
\end{equation*}
holds if the condition 
\begin{equation*}  \label{conditionIalfa}
\varphi(B) \left( \int_B \omega d\mu \right)^{\frac{1}{q}} \left( \int_B
\nu^{1-p^{\prime }} d\mu \right)^{\frac{1}{p^{\prime }}} \leq C
\end{equation*}
holds for all balls $B\subset \Xx$.
\end{lemma}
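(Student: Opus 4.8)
This is, in essence, the two--weight theorem of Sawyer and Wheeden \cite{SW} for fractional integrals on spaces of homogeneous type, with $\varphi(B)$ playing the role of a scale--invariant bound for the kernel $K(x,y)$ over the ball $B$; I only sketch the strategy below. The plan is to replace $I_\alpha$ by a positive dyadic model operator and then to exploit the strict gap $p<q$. First I would fix, via Christ's construction (or that of Hyt\"onen--Kairema), a finite family of dyadic systems $\mathcal{D}_1,\dots,\mathcal{D}_N$ adapted to the quasi--metric $d$, such that every ball $B\subset\Xx$ is contained in a dyadic cube $Q$ with $r(Q)\sim r(B)$; since $N$ is finite, it then suffices to argue on a single grid $\mathcal{D}$.

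The next step is a pointwise domination of the form $I_\alpha|f|(x)\lesssim\sum_{Q\in\mathcal{D}:\,x\in Q}\varphi(Q)\int_Q|f|\,d\mu$. To obtain it I would split $\int f(y)K(x,y)\,d\mu(y)$ along the dyadic annuli determined by the ancestors of the cube containing $x$; on the annulus at the scale of a cube $Q$ one has $d(x,y)\sim r(Q)$, and, using the doubling of $\mu$ together with $d(x,y)\ge C(K)r(Q)$ --- which is exactly where the constant $C(K)=K^{-4}/9$ and the quasi--triangle inequality \eqref{k} are used --- one bounds $K(x,y)$ by a multiple of $\varphi$ of a fixed dilate of $Q$, which is then reabsorbed into the grid.

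It then remains to establish the two--weight bound $\|Tf\|_{L^q(\omega\,d\mu)}\le C\|f\|_{L^p(\nu\,d\mu)}$ for the linear operator $Tf:=\sum_{Q\in\mathcal{D}}\varphi(Q)\bigl(\int_Q|f|\,d\mu\bigr)\chi_Q$. Here I would dualize against a nonnegative $g$ with $\|g\|_{L^{q'}(\omega\,d\mu)}=1$, reducing matters to a bound for $\sum_Q\varphi(Q)\bigl(\int_Q|f|\,d\mu\bigr)\bigl(\int_Q g\,\omega\,d\mu\bigr)$. Applying H\"older with exponents $p,p'$ to $\int_Q|f|\,d\mu$ brings in the factor $\bigl(\int_Q\nu^{1-p'}\,d\mu\bigr)^{1/p'}$, and feeding in the testing hypothesis $\varphi(Q)\bigl(\int_Q\omega\,d\mu\bigr)^{1/q}\bigl(\int_Q\nu^{1-p'}\,d\mu\bigr)^{1/p'}\le C$ bounds the $Q$--term by $C\bigl(\int_Q|f|^p\nu\,d\mu\bigr)^{1/p}\bigl(\int_Q\omega\,d\mu\bigr)^{-1/q}\int_Q g\,\omega\,d\mu$. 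I would then collapse the resulting sum by a stopping--time (principal cube) argument: select the cubes on which the relevant averages roughly double, use the doubling of $\omega\,d\mu$ and of $\nu^{1-p'}\,d\mu$ to pass freely between a cube and its dilates, and sum a geometric series in the generation index. This last summation is precisely where the strict inequality $p<q$ is indispensable --- it provides the ``extra room'' that upgrades the Muckenhoupt--type testing condition to a genuine operator bound, and the argument breaks down at $p=q$.

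The hard part, to my mind, is not any individual estimate but the bookkeeping needed to run the dyadic model and the stopping--time decomposition on an honest quasi--metric measure space: controlling the overlap constants coming from \eqref{k}, verifying that it is the doubling of the \emph{two} weights (and not merely of $\mu$) that is invoked when comparing balls to their dilates, and making the $p<q$ geometric summation quantitative. All of this is carried out in \cite{SW}, to which I refer for the complete argument.
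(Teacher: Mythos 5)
This lemma is not proved in the paper at all: it is imported verbatim from Sawyer--Wheeden \cite{SW}, so there is no in-paper argument to compare against. Your sketch is consistent with the strategy of the cited source (discretize $I_\alpha$ by a positive dyadic/ball model, control the kernel at each scale by $\varphi$ of a comparable ball using the quasi-triangle inequality \eqref{k} and the doubling of $\mu$, apply H\"older to bring in $\nu^{1-p'}$, insert the ball condition, and sum over scales using the doubling of $\omega\,d\mu$ and $\nu^{1-p'}d\mu$ together with the strict gap $p<q$), and your final deferral to \cite{SW} is exactly what the authors do. As a standalone proof, however, it is an outline rather than an argument: the two load-bearing steps --- the pointwise domination $I_\alpha|f|(x)\lesssim\sum_{Q\ni x}\varphi(Q)\int_Q|f|\,d\mu$ with constants uniform in the grid, and the stopping-time/geometric summation in which $p<q$ and the doubling of the \emph{two} weight measures are actually used --- are asserted, not carried out, and these are precisely the parts where the work of \cite{SW} lies. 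Given that the paper itself treats the lemma as a black box, this is acceptable, but be aware that nothing in your write-up could replace the citation. (Incidentally, the displayed inequality in the statement is missing a constant $C$ on the right-hand side; that is a typo in the paper, not an issue with your proposal.)
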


\begin{corollary}
Let $p,q$ be as in Lemma \ref{lemmafi}. And let $\omega $ a weight in $A_{p,q}$. Then 
\begin{equation}
\left( \int_{\Xx}[I_{\alpha }f(x)]^{q}\omega (x)^{q}dx\right) ^{\frac{1}{q}%
}\leq \left( \int_{\Xx}f(x)^{p}\omega (x)^{p}dx\right) ^{\frac{1}{p}}.
\end{equation}
\end{corollary}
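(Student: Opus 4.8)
The plan is to derive this corollary directly from Lemma \ref{lemmafi} by choosing the weights there appropriately and verifying the pointwise kernel condition using the hypothesis $\omega \in A_{p,q}$. Concretely, I would take $\omega(x) \rightsquigarrow \omega(x)^q$ (the measure-density in the left-hand integral) and $\nu(x) \rightsquigarrow \omega(x)^p$ in the statement of Lemma \ref{lemmafi}. By Lemma \ref{propA} (last item), $\omega \in A_{p,q}$ implies $\omega^q$ is doubling; and since $\omega \in A_{p,q}$ if and only if $\omega^{-1} \in A_{q',p'}$, the same item gives that $(\omega^{-1})^{p'} = \omega^{-p'} = (\omega^p)^{1-p'}$ is doubling, so both doubling hypotheses of Lemma \ref{lemmafi} are met.

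It remains to check the testing condition
\begin{equation*}
\varphi(B)\left(\int_B \omega^q\,dx\right)^{1/q}\left(\int_B \omega^{-p'}\,dx\right)^{1/p'}\leq C
\end{equation*}
for all balls $B$. The two integral factors are, up to constants, $\|\omega\chi_B\|_q\,\|\omega^{-1}\chi_B\|_{p'}$, which by the definition of $[\omega]_{A_{p,q}}$ is bounded by $[\omega]_{A_{p,q}}|B|^{1-\gamma}$ with $\gamma = \alpha/Q$ (equivalently $1/p - 1/q = \alpha/Q$, the value of $\alpha$ for which $I_\alpha$ has this scaling). So I need to show $\varphi(B)\lesssim |B|^{\gamma-1}$, i.e. that the kernel $K(x,y) = d(x,y)^\alpha/\mu(B(x,d(x,y)))$ satisfies $K(x,y)\lesssim |B|^{\gamma - 1}$ whenever $x,y\in B$ and $d(x,y)\geq C(K)r(B)$. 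This is the standard size estimate for the fractional-integral kernel on a space of homogeneous type: when $d(x,y)\sim r(B)$, one has $\mu(B(x,d(x,y)))\sim \mu(B)$ by doubling (comparing $B(x,d(x,y))$ with $B$, both of radius comparable to $r(B)$ up to the quasi-metric constant), and $d(x,y)^\alpha \sim r(B)^\alpha$; combining these with the Ahlfors-type relation $\mu(B)\sim r(B)^Q$ valid in the Carnot–Carathéodory / Carnot group setting — or more carefully just using $\alpha = Q(1/p-1/q)$ and $|B|^{\gamma-1} = |B|^{\alpha/Q - 1}$ — gives $K(x,y)\lesssim r(B)^\alpha/\mu(B) \sim |B|^{\alpha/Q}/|B| = |B|^{\gamma-1}$, as required.

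With the testing condition verified, Lemma \ref{lemmafi} applies and yields exactly the asserted inequality (note the constant in Lemma \ref{lemmafi} is normalized to $1$ after absorbing $C$ into $f$, or one simply keeps track of the implicit constant, which depends only on $p$, $q$, $[\omega]_{A_{p,q}}$ and the structural constants of $\Xx$). The main obstacle — though it is more bookkeeping than genuine difficulty — is matching the scaling: one must be careful that the exponent $\alpha$ of the fractional integral is tied to $p,q$ via $1/p - 1/q = \alpha/Q$ so that the $|B|^{\gamma-1}$ coming from the $A_{p,q}$ condition cancels precisely against the $r(B)^\alpha/\mu(B)$ coming from $\varphi(B)$; and one should double-check that the constraint $d(x,y)\geq C(K)r(B)$ with $C(K) = K^{-4}/9$ still allows the comparison $\mu(B(x,d(x,y)))\sim\mu(B)$, which it does since $d(x,y)$ is also $\lesssim K r(B)$ for $x,y\in B$, so $d(x,y)$ is comparable to $r(B)$ with constants depending only on $K$ and the doubling constant.
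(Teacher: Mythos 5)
Your proposal is correct and follows essentially the same route as the paper: apply Lemma \ref{lemmafi} with the choices $\omega^{q}$ and $\nu=\omega^{p}$, use the bound $\varphi(B)\leq C|B|^{\alpha/Q-1}$, and verify the testing condition directly from the $A_{p,q}$ definition (with the scaling $1/p-1/q=\alpha/Q$). You are in fact slightly more careful than the paper, which does not spell out the doubling hypotheses or the kernel size estimate, but the argument is the same.
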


\begin{proof}
Let $B$ a ball on $\Xx$. We take $\omega=\omega^q$ and $\nu=\omega^p$ in Lemma
\ref{lemmafi}. Since $\omega \in A_{p,q}$ and $\varphi(B)\leq C \lvert B
\rvert^{\frac{\alpha}{Q}-1} $, we have 
\begin{equation*}
\varphi(B) \left( \int_B \omega(x)^q dx \right)^{\frac{1}{q}} \left( \int_B
[\omega(x)^p]^{1-p^{\prime }} dx \right)^{\frac{1}{p^{\prime }}}
\end{equation*}
\begin{equation*}
\leq C \lvert B \rvert^{\frac{\alpha}{Q}-1} \left( \int_B \omega(x)^q dx
\right)^{\frac{1}{q}} \left( \int_B \omega(x)^{-p^{\prime }} dx \right)^{%
\frac{1}{p^{\prime }}}
\end{equation*}
\begin{equation*}
\leq C \lvert B \rvert^{\frac{\alpha}{Q}-1} \lvert B \rvert^{\frac{1}{q}+%
\frac{1}{p^{\prime }}} \left( \frac{1}{\lvert B \rvert} \int_B \omega(x)^q
dx \right)^{\frac{1}{q}} \left( \frac{1}{\lvert B \rvert} \int_B
\omega(x)^{-p^{\prime }} dx \right)^{\frac{1}{p^{\prime }}}
\end{equation*}
\begin{equation*}
\leq C.
\end{equation*}
So by Lemma \ref{lemmafi} we have the desired result.
\end{proof}

And now applying the Extrapolation Theorem \ref{extrapolation} we have the following result.

\begin{theorem}\label{I_alpha_bounded-weight} 
Fix $\alpha$, $0<\alpha<Q$. Given $p(\cdot)
\in \mathcal{P}(\Xx)$ such that $1<p_{-}\leq p_{+}<\frac{Q}{\alpha}$, define $q(\cdot)$ by 
\begin{equation*}
\frac{1}{p(x)}-\frac{1}{q(x)}=\frac{\alpha}{Q}.
\end{equation*}
Let $\sigma=(Q/\alpha)^{\prime }$. If $\omega \in A_{p(\cdot),q(\cdot)}$ is
such that $(q(\cdot)/\sigma, \omega^{\sigma})$ is a $M$-pair, we have 
\begin{equation*}
\lVert I_{\alpha}f \rVert_{L^{q(\cdot)}_\omega(\Xx)} \leq C \lVert f
\rVert_{L^{p(\cdot)}_\omega(\Xx)}.
\end{equation*}
\end{theorem}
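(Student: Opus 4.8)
The plan is to deduce this from the extrapolation Theorem \ref{extrapolation}, with the base ("initial") estimate supplied by the Corollary proved just above; the only real work is to check that the numerical parameters line up.

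First I would fix constant exponents $p_0,q_0$. Since $0<\alpha<Q$, I can pick any $p_0\in(1,Q/\alpha)$ and then define $q_0$ by
\[
\frac{1}{p_0}-\frac{1}{q_0}=\frac{\alpha}{Q},
\]
so that $1<p_0<q_0<\infty$. With this choice the parameter that Theorem \ref{extrapolation} associates to $(p_0,q_0)$, namely the $\sigma\ge 1$ with $1/\sigma'=1/p_0-1/q_0=\alpha/Q$, is exactly $\sigma=(Q/\alpha)'$, i.e.\ the one in the statement. Moreover the hypothesis $1/p(x)-1/q(x)=\alpha/Q$ is precisely the compatibility relation $1/p(x)-1/q(x)=1/p_0-1/q_0$ required by the extrapolation theorem, and the weight assumptions here---$\omega\in A_{p(\cdot),q(\cdot)}$ and $(q(\cdot)/\sigma,\omega^{\sigma})$ an $M$-pair---are exactly those in Theorem \ref{extrapolation}.

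Next I would verify the base hypothesis (\ref{conditionbase}) for these $p_0,q_0$. This is exactly the content of the Corollary above: for every $\omega_0\in A_{p_0,q_0}$,
\[
\left(\int_{\Xx}[I_\alpha f(x)]^{q_0}\omega_0(x)^{q_0}\,dx\right)^{1/q_0}\le C\left(\int_{\Xx}f(x)^{p_0}\omega_0(x)^{p_0}\,dx\right)^{1/p_0},
\]
the point being that the kernel of $I_\alpha$ satisfies $\varphi(B)\le C|B|^{\alpha/Q-1}$, so that $\varphi(B)|B|^{1/q_0}|B|^{1/p_0'}\le C|B|^{\alpha/Q-1+1/q_0+1/p_0'}=C$ by our choice of $p_0,q_0$ (the algebraic identity $\alpha/Q+1/q_0-1/p_0=0$ is what forces that choice), and Lemma \ref{lemmafi}---applied with $\omega=\omega_0^{q_0}$, $\nu=\omega_0^{p_0}$, both doubling since $\omega_0\in A_{p_0,q_0}$---gives the inequality. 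One should keep track that the constant depends only on $[\omega_0]_{A_{p_0,q_0}}$ and the structural constants of $(\Xx,d,\mu)$, as Theorem \ref{extrapolation} requires.

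Finally I would apply Theorem \ref{extrapolation} to the family
\[
\Ff=\{(|I_\alpha f|,|f|):f\ \text{bounded with bounded support}\},
\]
which yields $\|I_\alpha f\|_{L^{q(\cdot)}_\omega(\Xx)}\le C\|f\|_{L^{p(\cdot)}_\omega(\Xx)}$ for all such $f$; the general case then follows by a routine truncation/Fatou argument (replace $I_\alpha f$ by $\min(I_\alpha f,N)\chi_{B(x_0,R)}$, apply the inequality, and let $N,R\to\infty$ by monotone convergence, using that $I_\alpha f<\infty$ a.e.\ for nonnegative $f\in L^{p(\cdot)}_\omega$). I do not expect a genuine obstacle: every analytic ingredient---the weighted constant-exponent bound for $I_\alpha$, the extrapolation machinery, Proposition \ref{RubiodeFrancia} and Proposition \ref{property-weight}---is already established. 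The only points requiring care are the bookkeeping of the exponent identities above, the verification that $\varphi(B)\le C|B|^{\alpha/Q-1}$ in the homogeneous spaces considered (it holds because there $\mu(B(x,r))\approx r^Q$), and making sure $\Ff$ is stable under the truncations needed to discharge the a priori finiteness convention in Theorem \ref{extrapolation}.
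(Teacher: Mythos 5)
Your proposal is correct and follows essentially the same route as the paper: the constant-exponent weighted bound for $I_{\alpha}$ from the Corollary (via the Sawyer--Wheeden Lemma \ref{lemmafi} with $\omega_0^{q_0}$, $\omega_0^{p_0}$) serves as the base estimate \eqref{conditionbase}, and Theorem \ref{extrapolation} with $1/p_0-1/q_0=\alpha/Q$, hence $\sigma=(Q/\alpha)'$, transfers it to the variable-exponent weighted setting. Your explicit bookkeeping of $p_0,q_0$ and the truncation/Fatou step merely spell out what the paper leaves implicit in its one-line application of the extrapolation theorem.
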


To prove the Theorem \ref{prin}, when $p^-=1$, we need the weighted weak estimates for the fractional integral operator in homogeneous spaces considering weights in the  Muckenhoupt class $A_{p,q}.$  This follows from the next result about the weak type estimates in spaces of homogeneous type.

\begin{theorem}[\cite{FGW},\cite{FLW}]\label{I_alpha_debil_constante}
Let $(\Xx,d,dx)$ be a Carnot-Carath\'eodory space. Let $1\leq p<q<\infty$, $K$ be a compact subset of $\Omega$, $B=B(x_0,r)$, $x_0\in  K$ and
\begin{equation*}
T_Bf(x)=\int_{B} \frac{d(x,y)^\alpha}{|B(x,d(x,y))|}|f(y)|dy,
\end{equation*}
where $f \in L^1_{\text{loc}}(\Xx)$ and $x\in B$.  There are a constants $r_0$ and $C$ depending only on $K$, $\Omega$ and the Carnot-Carath\'eodory metric, such that if $r<r_0$ and   
$u,v$ are weights, then
\begin{equation}
\left(t^q\int_{\{y\in B: Tf(y)>t\}} v(y)dy\right)^{1/q}\leq C  L \left(\int_{B}|f(y)|^pu(y)dy\right)^{1/p},\qquad t>0,
\end{equation}
with
\begin{eqnarray}
\label{L}
L &=&
\left\{ 
\begin{array}{c}
\sup \left( \int_{B(x,r)}vdy \right)^{1/q}\left(\int_{B}K_r(x,y)^{p'}u(y)^{-\frac{1}{p-1}}dy\right)^{1/p^{\prime }}\,\text{, if }\,p>1 \\ 
\sup \left( \int_{B(x,r)}vdy \right)^{1/q}\left( \text{ess sup}_{y\in B}K_r(x,y)u^{-1}(y)\right)\, \text{ , if }\,p=1\text{,}%
\end{array}%
\right. 
\end{eqnarray}
here
$$
K_r(x,y)=\min\left\{\frac{r^\alpha}{|B(x,r)|},\frac{d(x,y)^\alpha}{|B(x,(d(x,y)))|}\right\},
$$
and the supremum is taken over all $x$ and $r$ such that $B(x,r)\subset 5B$ and $x\in B$.
\end{theorem}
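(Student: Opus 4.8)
The reduction to a single level‑set estimate is immediate, so the work is in proving, for each fixed $t>0$ (and, extending $f$ by zero outside $B$, which changes nothing since $T_B$ only integrates over $B$),
\[
v\big(\{y\in B:\,T_Bf(y)>t\}\big)\le\Big(\tfrac{CL}{t}\Big)^{q}\Big(\int_{B}|f|^{p}u\Big)^{q/p},
\]
since taking $q$‑th roots recovers the stated inequality. All of this is carried out on the localized space $(5B,d,dx)$, which by the Nagel--Stein--Wainger bound recalled above is a space of homogeneous type with constants depending only on $K$, $\Omega$ and the metric as long as $r<r_{0}$. A first step is to discretize $T_B$: because $d(y,z)^{\alpha}/|B(y,d(y,z))|$ is comparable, up to the doubling constant, to $\ell(Q)^{\alpha}/|Q|$ whenever $z$ ranges over a dyadic cube $Q\ni y$ of side $\approx d(y,z)$, one dominates $T_Bf$ on $B$, up to a constant and after passing to a finite family of Christ dyadic grids on $5B$, by $\sum_{Q}\frac{\ell(Q)^{\alpha}}{|Q|}\big(\int_{Q}|f|\big)\chi_{Q}$. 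Since $y,z\in B$ forces $d(y,z)\lesssim r$, only cubes with $\ell(Q)\lesssim r$ enter the sum, and this cutoff is exactly what the truncated kernel $K_{r}$ in the definition of $L$ records.

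I would then invoke the Sawyer--Wheeden stopping‑time decomposition for this dyadic fractional integral: one selects, in the finitely many grids, principal (stopping) cubes $\{Q_{j}\}$ so that $\{T_Bf>t\}$ is contained in $\bigcup_{j}Q_{j}$ with overlap bounded by the number of grids, and so that on each $Q_{j}$, with distinguished center $x_{j}$ and $\rho_{j}\approx\ell(Q_{j})$, one has $t\lesssim\frac{\rho_{j}^{\alpha}}{|B(x_{j},\rho_{j})|}\int_{Q_{j}}|f|$. Applying H\"older's inequality with exponent $p$ and weight $u$,
\[
t\;\lesssim\;\frac{\rho_{j}^{\alpha}}{|B(x_{j},\rho_{j})|}\int_{Q_{j}}|f|\;\le\;\frac{\rho_{j}^{\alpha}}{|B(x_{j},\rho_{j})|}\Big(\int_{Q_{j}}|f|^{p}u\Big)^{1/p}\Big(\int_{Q_{j}}u^{-1/(p-1)}\Big)^{1/p'}.
\]
Since $d(x_{j},z)\lesssim\rho_{j}$ for $z\in Q_{j}$ one has $K_{C\rho_{j}}(x_{j},z)\approx\rho_{j}^{\alpha}/|B(x_{j},\rho_{j})|$ there, so the last factor equals, up to constants, $\frac{|B(x_{j},\rho_{j})|}{\rho_{j}^{\alpha}}\big(\int_{Q_{j}}K_{C\rho_{j}}(x_{j},z)^{p'}u(z)^{-1/(p-1)}dz\big)^{1/p'}$; multiplying through by $v(Q_{j})^{1/q}\le v(B(x_{j},C\rho_{j}))^{1/q}$ and recognizing the resulting product as one of the quantities under the supremum that defines $L$ (legitimate once $r_{0}$ is chosen so small that $B(x_{j},C\rho_{j})\subset 5B$), one gets $v(Q_{j})^{1/q}\le\tfrac{CL}{t}\big(\int_{Q_{j}}|f|^{p}u\big)^{1/p}$. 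For $p=1$ the factor $\big(\int_{Q_{j}}u^{-1/(p-1)}\big)^{1/p'}$ is replaced by $\esup_{Q_{j}}u^{-1}$, which is exactly how the second alternative in the definition of $L$ enters.

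Finally, summing over $j$ and using $q/p\ge1$ together with the bounded overlap of the $Q_{j}$,
\[
v\big(\{y\in B:\,T_Bf(y)>t\}\big)\le\sum_{j}v(Q_{j})\le\Big(\tfrac{CL}{t}\Big)^{q}\sum_{j}\Big(\int_{Q_{j}}|f|^{p}u\Big)^{q/p}\le\Big(\tfrac{CL}{t}\Big)^{q}\Big(\int_{B}|f|^{p}u\Big)^{q/p},
\]
which is the desired level‑set bound. The delicate point is the stopping‑time step: unlike the fractional maximal operator, $T_B$ is not controlled pointwise by a single ball average, so one cannot merely cover $\{T_Bf>t\}$ by cubes with large $f$‑average, and the principal‑cube decomposition of the level set of this non‑local potential — including the proof that the selected cubes cover it and have bounded overlap — must be carried out carefully; this is the heart of the Sawyer--Wheeden argument and the place where the localization ($r<r_{0}$), the Nagel--Stein--Wainger local doubling, and the range $0<\alpha<Q$ of the order are all used, the last one ensuring in particular that the passage to the truncated kernel $K_{r}$ loses nothing. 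Keeping every constant explicit along the way is what produces the quantitative dependence on $L$.
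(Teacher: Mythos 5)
First, a point of comparison: the paper does not prove Theorem \ref{I_alpha_debil_constante} at all; it is imported verbatim from \cite{FGW} and \cite{FLW} (only its consequence, Corollary \ref{I_alpha_bebil}, is worked out here), so the benchmark is the argument in those references, and your proposal does not reproduce it. The step you delegate to a ``Sawyer--Wheeden stopping-time decomposition'' is precisely where the whole proof lives, and as you state it the step is false. You require cubes $Q_j$ that cover $\{y\in B:\,T_Bf(y)>t\}$, have bounded overlap, and satisfy $t\lesssim \rho_j^{\alpha}\,|B(x_j,\rho_j)|^{-1}\int_{Q_j}|f|$. But $T_Bf(x)$ can exceed $t$ because $N$ different scales each contribute about $t/N$; then every cube or ball containing $x$ has fractional average of order $t/N$, so $x$ cannot lie in any cube with the stated stopping property, no matter how the principal cubes are chosen and with no constant independent of $f$. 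In other words, the level set of the potential is not comparable to a level set of the fractional maximal function at comparable height; you acknowledge this (``one cannot merely cover\dots'') but then assert the needed decomposition exists without supplying it.

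The gap is structural rather than cosmetic, because your argument invokes the hypothesis only through single-cube quantities $v\bigl(B(x_j,C\rho_j)\bigr)^{1/q}\,\rho_j^{\alpha}|B(x_j,\rho_j)|^{-1}\bigl(\int_{Q_j}u^{-1/(p-1)}\bigr)^{1/p'}$, i.e.\ through a ball-type ($A_{p,q}$-like) condition that is strictly weaker than finiteness of $L$ when $u,v$ are arbitrary, possibly non-doubling, weights. For such weights the ball condition does not imply the weak type: mass of $u^{-1/(p-1)}$ in one part of $B$ produces, through the tail of the kernel, a large potential on a distant region where $v$ may be huge, and this is exactly why $L$ is built from the truncated kernel $K_r(x,\cdot)$ integrated in $L^{p'}(u^{-1/(p-1)})$ over \emph{all} of $B$, not over a single cube. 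Any correct proof must use that global integral; roughly, as in \cite{FGW}, for each $x$ in the level set one chooses a radius $r(x)$ at which the near part of $T_Bf(x)$ is at most $t/2$, notes that on the far region $K(x,y)\le C\,K_{r(x)}(x,y)$ thanks to the local volume growth of Nagel--Stein--Wainger (this is where $r<r_0$ enters), and applies H\"older over the whole ball $B$ to get $v\bigl(B(x,r(x))\bigr)^{1/q}\le CL\,t^{-1}\bigl(\int_B|f|^pu\bigr)^{1/p}$ before any covering; the covering and summation must then be arranged without assuming $v$ doubling. Your proposal bypasses all of this, so it does not establish the theorem.
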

\begin{corollary}\label{I_alpha_bebil} 
Let be $\Omega$ a compact open set of $\Xx$. Fix $\alpha$, $0<\alpha<Q$. Suppose $1\leq p<q<\frac{Q}{\alpha}$ such that
\begin{equation*}
\frac{1}{p}-\frac{1}{q}=\frac{\alpha}{Q}.
\end{equation*}
Let $\omega\in A_{p,q}$, then exist a constat $c>0$ such that  
\begin{equation*}
\left(\int_{\{x\in\Omega:|I_{\alpha}f(x)|>t\}}\omega(x)^q\,dx\right)^{\frac1q}\leq c\left(\frac1{t^p}\int_{\Omega}|f(x)|^p\omega(x)^p\right)^{\frac1p},
\end{equation*}
for all $f\in L^p_\omega(\Omega)$.
\end{corollary}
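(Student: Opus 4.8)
The plan is to deduce the weak-type inequality on $\Omega$ from the \emph{local} weak-type estimate of Theorem~\ref{I_alpha_debil_constante} by a finite covering argument, the only nonroutine ingredient being that the quantity $L$ in \eqref{L}, evaluated with $u=\omega^{p}$ and $v=\omega^{q}$, is finite (indeed $\lesssim[\omega]_{A_{p,q}}$) whenever $\omega\in A_{p,q}$. To set up the localization I would choose a bounded connected open set $\Omega'$ with $\overline{\Omega}\subset\Omega'$ and apply Theorem~\ref{I_alpha_debil_constante} with $\Omega'$ in place of $\Omega$ and $K=\overline{\Omega}$, obtaining constants $r_{0}$ and $C$. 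Letting $\kappa$ be the quasi-metric constant in \eqref{k}, I would fix a large $C_{0}=C_{0}(\kappa)$ and pick $\rho<r_{0}/C_{0}$ small enough that $\overline{\Omega}$ is covered by finitely many balls $B_{i}=B(x_{i},\rho)$, $x_{i}\in\overline{\Omega}$, $i=1,\dots,N$; put $B_{i}^{\ast}=B(x_{i},C_{0}\rho)$, of radius $<r_{0}$. Extending $f$ by $0$ outside $\Omega$, for $x\in B_{i}$ I would split the domain of integration of $I_{\alpha}$ at $B_{i}^{\ast}$:
\[
|I_{\alpha}f(x)|\le T_{B_{i}^{\ast}}f(x)+R_{i}(x),\qquad R_{i}(x)=\int_{\Omega\setminus B_{i}^{\ast}}\frac{d(x,y)^{\alpha}}{|B(x,d(x,y))|}\,|f(y)|\,dy.
\]
For $y\notin B_{i}^{\ast}$ and $x\in B_{i}$ one has $d(x,y)\gtrsim\rho$ and, by doubling, $|B(x,d(x,y))|\gtrsim|B_{i}|$, so the kernel there is $\lesssim(\operatorname{diam}\Omega')^{\alpha}/|B_{i}|$; together with H\"older's inequality (Lemma~\ref{HI}) and $\|\omega^{-1}\chi_{\Omega}\|_{p'}<\infty$ (which follows from $\omega\in A_{p,q}$ and the finite cover, also when $p=1$) this gives $R_{i}(x)\le c_{R}\,\|f\|_{L^{p}_{\omega}(\Omega)}$ for all $x$, with $c_{R}$ depending only on $\Omega,\omega,\alpha,Q,p$.

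Then I would assemble the pieces. For each $i$, $\{x\in B_{i}:|I_{\alpha}f(x)|>t\}\subset\{x\in B_{i}:T_{B_{i}^{\ast}}f(x)>t/2\}\cup\{x\in B_{i}:R_{i}(x)>t/2\}$. Applying Theorem~\ref{I_alpha_debil_constante} to the ball $B_{i}^{\ast}$ with $u=\omega^{p}$, $v=\omega^{q}$ and threshold $t/2$ (and using $\int_{B_{i}^{\ast}}|f|^{p}\omega^{p}\le\|f\|_{L^{p}_{\omega}(\Omega)}^{p}$) bounds the $\omega^{q}$-measure of the first set, raised to the power $1/q$, by $(2CL_{i}/t)\|f\|_{L^{p}_{\omega}(\Omega)}$, where $L_{i}$ is the corresponding supremum in \eqref{L}; the second set is empty unless $t<2c_{R}\|f\|_{L^{p}_{\omega}(\Omega)}$, in which case it lies in $B_{i}$, so its $\omega^{q}$-measure to the power $1/q$ is $\le\|\omega\chi_{B_{i}}\|_{q}\le(2c_{R}\|\omega\chi_{B_{i}}\|_{q}/t)\|f\|_{L^{p}_{\omega}(\Omega)}$. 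Since $\overline{\Omega}\subset\bigcup_{i}B_{i}$ and $\big(\sum_{i}a_{i}\big)^{1/q}\le\sum_{i}a_{i}^{1/q}$ for $q\ge1$, summing over $i$ yields the asserted inequality with $c=\sum_{i=1}^{N}\big(2CL_{i}+2c_{R}\|\omega\chi_{B_{i}}\|_{q}\big)<\infty$.

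The hard part will be the finiteness of each $L_{i}$, uniformly in $i$. I would estimate the integral (respectively the essential supremum, when $p=1$) defining $L_{i}$ by decomposing over $B(x,r)$ and the dyadic annuli $A_{j}=\{\,y:2^{j-1}r\le d(x,y)<2^{j}r\,\}$, $j\ge1$. On $A_{j}$ one has $K_{r}(x,y)\sim d(x,y)^{\alpha}|B(x,d(x,y))|^{-1}\lesssim|B(x,2^{j}r)|^{\alpha/Q-1}$ — this is precisely the estimate behind $\varphi(B)\lesssim|B|^{\alpha/Q-1}$, equivalently $|B(x,s)|\gtrsim s^{Q}$ uniformly for $x\in K$ — and likewise $K_{r}\lesssim|B(x,r)|^{\alpha/Q-1}$ on $B(x,r)$. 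Feeding in the $A_{p,q}$ estimate $\|\omega^{-1}\chi_{B(x,2^{j}r)}\|_{p'}\le[\omega]_{A_{p,q}}|B(x,2^{j}r)|^{1-\alpha/Q}\|\omega\chi_{B(x,2^{j}r)}\|_{q}^{-1}$, the powers of $|B(x,2^{j}r)|$ cancel exactly, so the $j$-th contribution to $\int_{B_{i}^{\ast}}K_{r}^{p'}\omega^{-p'}$ is $\lesssim[\omega]_{A_{p,q}}^{p'}\|\omega\chi_{B(x,2^{j}r)}\|_{q}^{-p'}$ (and $\lesssim[\omega]_{A_{1,q}}\|\omega\chi_{B(x,2^{j}r)}\|_{q}^{-1}$ when $p=1$). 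Since $\omega^{q}$ is doubling (Lemma~\ref{propA}) and balls are connected, $\omega^{q}$ is reverse doubling, so $\|\omega\chi_{B(x,2^{j}r)}\|_{q}\ge\lambda^{j/q}\|\omega\chi_{B(x,r)}\|_{q}$ for a fixed $\lambda>1$; summing the resulting geometric series gives $L_{i}\lesssim[\omega]_{A_{p,q}}$ when $p>1$, while for $p=1$ the essential supremum over $j$ is dominated by the $j=0$ term and $L_{i}\lesssim[\omega]_{A_{1,q}}$, in both cases uniformly in $i$. This cancellation, which hinges on the exact relation $1/p-1/q=\alpha/Q$ and on the reverse doubling of $\omega^{q}$, is the technical heart of the argument.
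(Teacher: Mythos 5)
Your argument is correct and rests on the same two pillars as the paper's proof: Theorem \ref{I_alpha_debil_constante} as the key lemma, and the verification that the quantity $L$ in \eqref{L} with $u=\omega^{p}$, $v=\omega^{q}$ is controlled by $[\omega]_{A_{p,q}}$ via a dyadic-annuli decomposition, the $A_{p,q}$ condition with $1/p-1/q=\alpha/Q$, the local lower volume bound behind $\varphi(B)\lesssim|B|^{\alpha/Q-1}$, and the (reverse) doubling of $\omega^{q}\,dx$ — for $p=1$ your computation is essentially the one written in the paper. Where you differ is in the assembly: the paper simply encloses $\Omega$ in a single ball $B_{0}$ and invokes the theorem there, and for $p>1$ it delegates the bound $L\leq C(\omega)$ to Remark 4.3 of \cite{FGW}; you instead cover $\overline{\Omega}$ by finitely many balls of radius below the threshold $r_{0}$, split $I_{\alpha}f$ on each $B_{i}$ into the local piece $T_{B_{i}^{\ast}}f$ and a tail $R_{i}$ controlled pointwise by $\|f\|_{L^{p}_{\omega}(\Omega)}$ via H\"older and $\|\omega^{-1}\chi_{\Omega}\|_{p'}<\infty$, and you carry out the $p>1$ estimate of $L_{i}$ explicitly. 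This buys you compatibility with the restriction $r<r_{0}$ in Theorem \ref{I_alpha_debil_constante} (which the paper's single-ball shortcut quietly ignores, since in a Carnot--Carath\'eodory space the relevant volume estimates are only local) at the cost of the extra, but routine, covering and near/far bookkeeping; the technical heart — the cancellation of the powers of $|B(x,2^{j}r)|$ and the geometric decay from reverse doubling of $\omega^{q}$ — is the same in both proofs.
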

\begin{proof}
The proof follows from the Remark 4.3 in \cite{FGW}. In the case of $u=\omega^p$, $v=\omega^q$  and $\omega \in A_{p,q}$ with $\frac{1}{p}-\frac{1}{q}=\frac{\alpha}{Q}$ we have $L\leq C(\omega)$. Let's see this for the case of $p=1$, as $\omega \in A_{1,q}$ so $\omega^q dx$ is a doubling measure then
\begin{align*}
\text{ess sup}_{y\in B}K_r(x,y)\omega^{-1}(y)&\leq \text{ess sup}_{y\in B(x,r)}K_r(x,y)\omega^{-1}(y) \\
&\qquad+\sum_{j>0}\text{ess sup}_{y\in B\cap B(x,2^{j+1}r)\backslash B(x,2^{j}r)}K_r(x,y)\omega^{-1}(y)\\
&\leq C\sum_{j>0} \frac{(2^{j+1}r)^\alpha}{|B(x,2^{j+1}r)|}\text{ess sup}_{y\in B(x,2^{j+1}r)}\omega^{-1}(y)\\
&\leq C(\omega)\sum_{j>0}\left(\int_{B(x,2^{j+1}r)}\omega(y)^{q}dy\right)^{-1/q} ,  \qquad \omega \in A_{1,q}\\
&\leq C(\omega)\sum_{j>0}k^{(j+1)/q}\left(\int_{B(x,r)}\omega(y)^{q}dy\right)^{-1/q}  , \qquad \text{ for some }\, 0<k<1\\
&=C(\omega) \left(\int_{B(x,r)}\omega(y)^{q}dy\right)^{-1/q} ,
\end{align*}
then
\begin{align*}
L&=\sup \left( \int_{B(x,r)}\omega(y)^qdy \right)^{1/q}\left( \text{ess sup}_{y\in B}K_r(x,y)u^{-1}(y)\right)\\
 &\leq C(\omega) \sup \left( \int_{B(x,r)}\omega(y)^qdy \right)^{1/q}\left(\int_{B(x,r)}\omega(y)^{q}dy\right)^{-1/q}=C(\omega).
\end{align*}
Finally, when $\overline{\Omega}$ is a compact subset. There exists a ball $B_0$ such that $\Omega\subset B_0$. Then we apply Theorem \ref{I_alpha_debil_constante} to get the weak type estimates of $I_\alpha$ in $\Omega$.
\end{proof}

We recall a representation formula given by Lu and Wheeden in \cite{LW}. 

The measure $\mu$ satisfies a reverse doubling condition of order $1$, if there is a constant $C>0$ such that if $B_1$ and $B_2$ are balls with centres in $\Omega$ and with $B_1\subset B_2$, then
$$
\mu(B_2)\geq C\frac{\rho(B_2)}{\rho(B_1)}\mu(B_1),
$$
where $\rho (B)$ denotes the radius of $B$.

The 'segment' property holds for a ball $B\subset \Omega$ with center $x_B$, if for each $x\in B$ there  is  a  continuous  curve $\gamma:[0,1]\to B$ such that $\gamma(0)=x_B$, $\gamma(1)=x$ and $d(x_B,z)=d(x_B,y)+d(y,z)$ for all $y,\,z\in\gamma$ with $y=\gamma(s)$, $z=\gamma(t)$, $0\leq s\leq t\leq1.$

\begin{theorem}[\protect\cite{LW}]
\label{RepresentationFormulaFLW} Suppose that $\mu $ and $\nu $ are doubling
measures on a Carnot-Carath\'{e}odory space $(\mathcal{X},d)$, $\mu$ satisfies a reverse doubling condition of order $1$ and let $\Omega \subset \mathcal{X}$ be a weak Boman chain domain, such that the 'segment' property holds for all ball $B\subset \Omega$. Assume that there
exists $a_{1}\geq 1$ such that for all balls $B$ with $a_{1}B\subset \Omega $%
, 
\begin{equation*}
\dfrac{1}{\nu (B)}\int_{B}\lvert f-f_{B,\nu }\rvert d\nu \leq C\dfrac{\rho
(B)}{\mu (B)}\int_{a_{1}B}\lvert Xf\rvert d\mu ,
\end{equation*}%
where  $f_{B,\nu }=\frac{1}{\nu (B)}%
\int_{B}f(y)d\nu (y)$. Then for $\nu -a.e.x\in \Omega $, 
\begin{equation*}
\lvert f(x)-f_{B_{0},\nu }\rvert \leq C\int_{\Omega }\lvert Xf(y)\rvert 
\dfrac{d(x,y)}{\mu (B(x,d(x,y)))}d\mu (y),
\end{equation*}%
where $B_{0}$ is the central ball in $\Omega $, and $C$ is independent of $f$
and $x\in \Omega $.
\end{theorem}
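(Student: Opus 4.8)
The plan is to prove the formula by a telescoping (chaining) argument along the balls supplied by the weak Boman chain condition, converting the hypothesized local $(1,1)$-Poincar\'e inequality into a pointwise estimate and then collapsing the resulting series into the fractional integral by means of the reverse doubling property. First I would fix a point $x\in\Omega$ that is a Lebesgue point of $f$ with respect to $\nu$; this is a set of full $\nu$-measure, since $\nu$ is doubling and hence the Lebesgue differentiation theorem applies. Using the central ball $B_{0}$, the covering $\Omega=\bigcup_{B\in\mathcal{F}}B$, and crucially the `segment' property, I would build a chain $\{B_{j}\}_{j\ge 0}\subset\mathcal{F}$ with $B_{0}$ the central ball, $\rho(B_{j})\to 0$, consecutive balls overlapping through the intermediate balls $D_{j}\subset B_{j}\cap B_{j+1}$ with $B_{j}\cup B_{j+1}\subset\Lambda D_{j}$, radii decaying geometrically $\rho(B_{j+1})\le\theta\,\rho(B_{j})$ for a fixed $\theta<1$, and $x\in\Lambda B_{j}$ for every $j$ (the last point coming from the Boman property $B\subset\Lambda B_{j}$ applied to the small tail balls containing $x$). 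The segment property is exactly what lets one center such a chain along a curve running from $B_{0}$ to $x$ with radii comparable to the distance to $x$ and decreasing at a fixed rate. Since $x$ is a $\nu$-Lebesgue point, $f_{B_{j},\nu}\to f(x)$, so
\[
\lvert f(x)-f_{B_{0},\nu}\rvert\le\sum_{j\ge 0}\lvert f_{B_{j+1},\nu}-f_{B_{j},\nu}\rvert .
\]

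Next I would estimate each telescoped difference. Inserting $f_{D_{j},\nu}$ and using that $\nu$ is doubling together with $B_{j},B_{j+1}\subset\Lambda D_{j}$, both $\lvert f_{B_{j+1},\nu}-f_{D_{j},\nu}\rvert$ and $\lvert f_{D_{j},\nu}-f_{B_{j},\nu}\rvert$ are controlled by the oscillation averages $\nu(B_{i})^{-1}\int_{B_{i}}\lvert f-f_{B_{i},\nu}\rvert\,d\nu$ for $i\in\{j,j+1\}$. Applying the hypothesis to each such ball (legitimate once the chain is arranged so that $a_{1}B_{i}\subset\Omega$) gives
\[
\lvert f_{B_{j+1},\nu}-f_{B_{j},\nu}\rvert\le C\sum_{i\in\{j,j+1\}}\frac{\rho(B_{i})}{\mu(B_{i})}\int_{a_{1}B_{i}}\lvert Xf\rvert\,d\mu ,
\]
and summing in $j$ yields $\lvert f(x)-f_{B_{0},\nu}\rvert\le C\sum_{j}\frac{\rho(B_{j})}{\mu(B_{j})}\int_{a_{1}B_{j}}\lvert Xf\rvert\,d\mu$.

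Finally I would turn this series into the fractional integral. By Tonelli it equals $\int_{\Omega}\lvert Xf(y)\rvert\,\bigl(\sum_{j:\,y\in a_{1}B_{j}}\frac{\rho(B_{j})}{\mu(B_{j})}\bigr)\,d\mu(y)$, so it suffices to bound the inner sum by $C\,d(x,y)/\mu(B(x,d(x,y)))$. For every $j$ with $y\in a_{1}B_{j}$ one has $d(x,y)\le C\rho(B_{j})$ (since $x\in\Lambda B_{j}$ and $y\in a_{1}B_{j}$), so $B(x,d(x,y))$ sits inside a fixed dilate of $B_{j}$; the reverse doubling condition of order $1$ then forces $\rho(B_{j})/\mu(B_{j})$ to decay geometrically as the radii increase, while the geometric decay of the chain radii makes the admissible scales geometrically spaced. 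Hence the inner sum is a convergent geometric series dominated by its largest term, the one at scale $\approx d(x,y)$, which by the doubling of $\mu$ is comparable to $d(x,y)/\mu(B(x,d(x,y)))$; this closes the estimate. I expect the main obstacle to be the very first step: producing, for $\nu$-a.e.\ $x$, a Boman-type chain that actually terminates at $x$ with geometrically decreasing radii, with $x\in\Lambda B_{j}$ throughout and all dilates $a_{1}B_{j}$ contained in $\Omega$, since this is where the segment property and reverse doubling interact most delicately. Once such a chain is in hand, the telescoping and the geometric summation are routine.
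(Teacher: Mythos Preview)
The paper does not contain its own proof of this statement: Theorem~\ref{RepresentationFormulaFLW} is quoted from \cite{LW} and used as a black box, so there is no in-paper argument to compare against. That said, your proposal is precisely the strategy employed in the original Lu--Wheeden paper: construct, for a $\nu$-Lebesgue point $x$, a chain of balls from $B_{0}$ down to $x$ with geometrically decreasing radii (this is where the segment property enters), telescope $|f(x)-f_{B_{0},\nu}|$ along the chain, feed each oscillation term through the assumed $(1,1)$-Poincar\'e inequality, and then collapse the resulting sum into the fractional integral via the reverse doubling of $\mu$ and the doubling of $\nu$. Your identification of the delicate step---producing the chain with $x\in\Lambda B_{j}$, $a_{1}B_{j}\subset\Omega$, and radii decaying at a fixed rate---is accurate; in \cite{LW} this is handled by combining the Boman chain with a local ``telescoping'' chain inside a fixed ball around $x$ built from the segment property, and then splicing the two chains. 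Your outline is correct and aligned with the source.
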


The proof of the Theorem \ref{prin} follows from the constant exponent case, the last lemma and the Extrapolation Theorem.

\begin{lemma}
\label{Poincaré_p=1} Given a compact weak Boman chain domain $\Omega \subset X$ and 
$p$, $1\leq p<Q$, $\omega \in A_{p,p\ast }$, there is a constant $C=C(\Omega
,p,\omega )$ such that for all $f\in Lip(\Omega )$, 
\begin{equation*}
\left( \int_{\Omega }\lvert f(x)-f_{\Omega }\rvert ^{p\ast }\omega(x)^{p\ast
}dx\right) ^{1/p\ast }\leq C\left( \int_{\Omega }\lvert Xf(x)\rvert
^{p}\omega (x)^{p}dx\right) ^{1/p},
\end{equation*}%
where $p\ast =\dfrac{Qp}{Q-p}$.
\end{lemma}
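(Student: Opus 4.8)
The plan is to reduce the inequality, via the Lu--Wheeden representation formula (Theorem~\ref{RepresentationFormulaFLW}), to the weighted mapping properties of the fractional integral $I_{1}$ established above, and to treat the endpoint $p=1$ separately by a truncation argument. First I would verify that a compact weak Boman chain domain $\Omega$ with Lebesgue measure satisfies the hypotheses of Theorem~\ref{RepresentationFormulaFLW} for $\mu=\nu=dx$: Lebesgue measure is locally doubling on a Carnot--Carath\'eodory space by \cite{NSW} and satisfies a reverse doubling condition of order $1$ (ball volumes grow at least linearly in the radius), the Carnot--Carath\'eodory distance is a length metric so every ball has the `segment' property, and the required $L^{1}$ Poincar\'e inequality on balls $\tfrac{1}{|B|}\int_{B}|f-f_{B}|\,dx\le C\,\tfrac{r(B)}{|B|}\int_{a_{1}B}|Xf|\,dx$ is precisely the case $p=1$ of \eqref{poincare} (see \cite{J}, \cite{CDG}, \cite{DGP}). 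Theorem~\ref{RepresentationFormulaFLW} then yields, for $f\in Lip(\Omega)$ and a.e.\ $x\in\Omega$,
\[
|f(x)-f_{B_{0}}|\le C\int_{\Omega}|Xf(y)|\,\frac{d(x,y)}{|B(x,d(x,y))|}\,dy=C\,I_{1}\big(|Xf|\chi_{\Omega}\big)(x),
\]
with $B_{0}$ the central ball of $\Omega$ and $f_{B_{0}}=\tfrac{1}{|B_{0}|}\int_{B_{0}}f\,dx$.

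For $1<p<Q$ the conclusion is then immediate: since $\omega\in A_{p,p^{\ast}}$ and $\tfrac1p-\tfrac1{p^{\ast}}=\tfrac1Q$, the weighted strong type bound for $I_{1}$ (the Corollary following Lemma~\ref{lemmafi}, with $\alpha=1$) applied to $|Xf|\chi_{\Omega}$ gives
\[
\Big(\int_{\Omega}|f-f_{B_{0}}|^{p^{\ast}}\omega^{p^{\ast}}\,dx\Big)^{1/p^{\ast}}\le C\Big(\int_{\mathcal{X}}\big[I_{1}(|Xf|\chi_{\Omega})\big]^{p^{\ast}}\omega^{p^{\ast}}\,dx\Big)^{1/p^{\ast}}\le C\Big(\int_{\Omega}|Xf|^{p}\omega^{p}\,dx\Big)^{1/p}.
\]
For $p=1$ (so $p^{\ast}=Q'$) only the \emph{weak} type bound (Corollary~\ref{I_alpha_bebil}, with $p=1$, $q=Q'$) is available, and combined with the representation formula it gives the weak type Poincar\'e inequality
\[
\sup_{t>0}\;t\,\Big(\int_{\{x\in\Omega:\,|f(x)-f_{B_{0}}|>t\}}\omega^{Q'}\,dx\Big)^{1/Q'}\le C\int_{\Omega}|Xf|\,\omega\,dx .
\]

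To upgrade this to the strong inequality I would use the Maz'ya truncation method. Writing $u=f-f_{B_{0}}$ and $v_{j}=\min\{(|u|-2^{j})^{+},2^{j}\}$ for $j\in\mathbb{Z}$, each $v_{j}$ is Lipschitz on $\Omega$, vanishes on $\{|u|\le2^{j}\}$, equals $2^{j}$ on $\{|u|\ge2^{j+1}\}$, and satisfies $|Xv_{j}|=|Xu|\,\chi_{E_{j}}$ a.e.\ with $E_{j}=\{2^{j}<|u|\le2^{j+1}\}$, so the sets $E_{j}$ are pairwise disjoint and $\sum_{j}\int_{E_{j}}|Xu|\,\omega\le\int_{\Omega}|Xf|\,\omega$. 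Applying the weak type Poincar\'e inequality to each $v_{j}$ (after subtracting a near-optimal constant, and distinguishing whether $\{|u|\ge2^{j+1}\}$ or $\{|u|\le2^{j}\}$ carries at least half of the $\omega^{Q'}$-measure of $\Omega$), one controls $\omega^{Q'}(\{|u|>2^{j}\})^{1/Q'}$ by $C\,2^{-j}\int_{E_{j}}|Xu|\,\omega$ for all but finitely many $j$; summing the resulting series and estimating the finitely many remaining levels by $\omega^{Q'}(\Omega)$ produces
\[
\Big(\int_{\Omega}|f-f_{B_{0}}|^{Q'}\omega^{Q'}\,dx\Big)^{1/Q'}\le C\int_{\Omega}|Xf|\,\omega\,dx ,
\]
which is the asserted estimate for $p=1$.

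Finally I would replace $f_{B_{0}}$ by $f_{\Omega}$, which only changes the constant: since $\omega\in A_{p,p^{\ast}}$ gives $\omega^{-1}\in L^{p'}_{\mathrm{loc}}\subset L^{(p^{\ast})'}_{\mathrm{loc}}$, H\"older's inequality bounds $\int_{\Omega}|f-f_{B_{0}}|\,dx$, hence $|f_{\Omega}-f_{B_{0}}|$, hence $\|f_{\Omega}-f_{B_{0}}\|_{L^{p^{\ast}}_{\omega}(\Omega)}$, by $C(\Omega,\omega,p)\,\|f-f_{B_{0}}\|_{L^{p^{\ast}}_{\omega}(\Omega)}$, and the triangle inequality closes the argument with $C=C(\Omega,p,[\omega]_{A_{p,p^{\ast}}})$. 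I expect the main obstacle to be the $p=1$ endpoint: verifying that the dyadic truncations $v_{j}$ are admissible in the representation formula with a constant independent of $j$, handling the averages $(v_{j})_{B_{0}}$ through the two-cases dichotomy, and carrying out the summation that turns the weak type estimate into the strong one. The remaining steps are routine once the cited operator bounds are in hand.
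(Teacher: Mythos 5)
Your proposal is correct in substance and draws on the same ingredients the paper uses — the Lu--Wheeden representation formula (Theorem~\ref{RepresentationFormulaFLW}) with $\mu=\nu=dx$, the Sawyer--Wheeden weighted bounds for $I_{1}$, and a dyadic Maz'ya-type truncation — but it is organized differently. The paper treats all $1\le p<Q$ by a single truncation argument using only the \emph{weak}-type estimate of Corollary~\ref{I_alpha_bebil}: it applies the representation formula to the truncated functions $f_{j}$ themselves, absorbs the average over the central ball by fixing a threshold level $M$ with $2^{M}\simeq \frac{C(\Omega)}{|B_{0}|}\int_{\Omega}|Xf|\,dx$ (via the unweighted inequality \eqref{poincare}), sums over $j>M$ using $p^{*}/p\ge 1$ and the disjointness of the level sets, and controls the low levels by H\"older together with $\omega^{p^{*}}(\Omega),\ \omega^{-p'}(\Omega)<\infty$. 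You instead use the representation formula once for $f$ and, when $1<p<Q$, conclude directly from the strong-type corollary following Lemma~\ref{lemmafi} — which is precisely the constant-exponent version of the paper's own argument for Theorem~\ref{prin}, Case 1 — reserving the truncation machinery for the endpoint $p=1$. Your route gives a shorter proof for $p>1$ at the cost of invoking the strong bound; the paper's uniform weak-type argument is what lets it handle the whole range $1\le p<Q$, including the endpoint, in one pass.

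Two points in your $p=1$ sketch need tightening, and they are exactly where the paper's level-$M$ device does the work. First, the dichotomy as you state it (whether $\{|u|\ge 2^{j+1}\}$ or $\{|u|\le 2^{j}\}$ carries half of $\omega^{Q'}(\Omega)$) is not exhaustive; the workable split is on whether the subtracted constant $(v_{j})_{B_{0}}$ is $\le 2^{j-1}$ or not, after first splitting $u$ at a median so that each truncation vanishes on a set of $\omega^{Q'}$-measure at least $\tfrac12\omega^{Q'}(\Omega)$. Second, for the ``bad'' levels it is not enough to bound their contribution by $\omega^{Q'}(\Omega)$: you must also verify $2^{j}\le C\,\omega^{Q'}(\Omega)^{-1/Q'}\int_{\Omega}|Xf|\,\omega\,dx$ at those levels before summing $2^{jQ'}\omega^{Q'}(\Omega)$; this does come out of the weak inequality applied on the large zero set of $v_{j}$ (the paper's analogue is the final estimate $2^{M+1}\bigl(\omega^{p^{*}}(\Omega)\bigr)^{1/p^{*}}\lesssim \|Xf\|_{L^{p}_{\omega}(\Omega)}$ obtained by H\"older), but it must be stated. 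With these standard adjustments your argument closes, and your final replacement of $f_{B_{0}}$ by $f_{\Omega}$ via H\"older is the same reduction the paper performs in the proof of Theorem~\ref{prin}.
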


\begin{proof}
Let $\Omega _{j}=\left\{ x\in \Omega :2^{j}<\lvert f(x)-f_{\Omega }\rvert
\leq 2^{j+1}\right\} $. We define%
\begin{equation*}
f_{j}(x)=\left\{ 
\begin{array}{ll}%
2^{j}  &\qquad \text{si }\left\vert f(x)-f_{\Omega }\right\vert \leq 2^{j} \\ 
\left\vert f(x)-f_{\Omega }\right\vert &\qquad \text{si }x\in \Omega _{j} \\ 
2^{j+1} &\qquad \text{si }\left\vert f(x)-f_{\Omega }\right\vert >2^{j+1}%
\end{array}%
\right.
\end{equation*}

It is easy to see that the function $f_{j}$ is weakly differetiable and $%
\left\vert Xf_{j}(x)\right\vert =\left\vert Xf(x)\right\vert \chi _{\Omega
_{j}}$, almost everywhere.

We have%
\begin{equation*}
2^{j}\leq f_{j}(x)\leq 2^{j}+\left\vert f(x)-f_{\Omega }\right\vert \text{.}
\end{equation*}

If $x\in \Omega _{j+1}$, by equation \eqref{poincare} we can apply the Theorem \ref{RepresentationFormulaFLW} considering the Lebesgue measure,
\begin{eqnarray*}
2^{j+1} &=&f_{j}(x)=\left\vert f_{j}(x)-f_{j,B}\right\vert +f_{j,B} \\
&\leq &C(\Omega) I_{1}(Xf_{j}(y))+2^{j}+\frac{1}{\left\vert B\right\vert }%
\int_{B}\left\vert f-f_{\Omega }\right\vert dz\text{.}
\end{eqnarray*}

And by equation \eqref{poincare} 
\begin{eqnarray*}
\frac{1}{\left\vert B\right\vert }\int_{B}\left\vert f-f_{\Omega
}\right\vert dz &\leq &\frac{C(\Omega )}{\left\vert B\right\vert }%
\int_{\Omega }\left\vert Xf\right\vert dz \\
\end{eqnarray*}
So,
\begin{equation*}
2^{j}\leq C(\Omega)I_{1}(Xf_{j})(x)+\frac{C(\Omega )}{\left\vert B\right\vert }\int_{\Omega }\left\vert Xf\right\vert dz 
\end{equation*}
We choose $M$ such that
$$
2^{M-1}<\frac{C(\Omega )}{\left\vert B\right\vert }\int_{\Omega }\left\vert Xf\right\vert dz\leq 2^{M}. 
$$
If $j>M$ we get
\begin{equation}\label{2jota}
2^{j}\leq C(\Omega)I_{1}(Xf_{j})(x)+2^{j-1}. 
\end{equation}
Finally, we have%
\begin{eqnarray*}
\int_{\Omega }\left\vert f-f_{\Omega }\right\vert ^{p^{\ast }}\omega
^{p^{\ast }}dx &=& \int_{\{x\in \Omega: \left\vert f-f_{\Omega }\right\vert\leq 2^{M+1} \}}\left\vert f-f_{\Omega }\right\vert ^{p^{\ast }}\omega
^{p^{\ast }}dx\\
&&\qquad+\int_{\{x\in \Omega: \left\vert f-f_{\Omega }\right\vert> 2^{M+1}\} }\left\vert f-f_{\Omega }\right\vert ^{p^{\ast }}\omega
^{p^{\ast }}dx\\
&=&\int_{\{x\in \Omega: \left\vert f-f_{\Omega }\right\vert\leq 2^{M+1} \}}2^{(M+1)p^{\ast}}\omega
^{p^{\ast }}dx\\
&&\qquad+\sum_{j>M}\int_{\Omega _{j}}\left\vert f(x)-f_{\Omega
}\right\vert ^{p^{\ast }}\omega(x) ^{p^{\ast }}dx \\
&=&2^{(M+1)p^{\ast }}\omega
^{p^{\ast }}(\Omega)+\sum_{j>M}\int_{\Omega _{j}}(2^{j+1})^{p^{\ast }}\omega(x) ^{p^{\ast }}dx
\end{eqnarray*}
On the one hand, by  \eqref{2jota} in $\Omega _{j+1}$, if $j>M$ we have $C2^{j-1}\leq I_{1}(Xf_{j})(x)$. And by Corollary \ref{I_alpha_bebil},
\begin{eqnarray*}
\sum_{j>M}\int_{\Omega _{j}}(2^{j+1})^{p^{\ast }}\omega(x)
^{p^{\ast }}dx &=&4^{p^{\ast }}C(\Omega)^{-p^{\ast }}\sum_{j}\int_{\Omega _{j}}\left(
C^{-1}2^{j-1}\right) ^{p^{\ast }}\omega(x) ^{p^{\ast }}dx \\
&\leq &C(\Omega,p)\sum_{j}\int_{\{x\in \Omega :I_{1}(\left\vert Xf_{j-1}\right\vert
)(x)>C2^{j-1}\}}\left( C2^{j-1}\right) ^{p^{\ast }}\omega(x) ^{p^{\ast }}dx
\\
&\leq &C(\Omega,p)\sum_{j}\left( \int_{\Omega }\left\vert Xf_{j-1}(x)\right\vert
^{p}\omega (x)^{p}dx\right) ^{\frac{p^{\ast }}{p}} \\
&\leq &C(\Omega,p)\left( \int_{\Omega }\left\vert Xf(x)\right\vert ^{p}\omega
(x)^{p}dx\right) ^{^{^{\frac{p^{\ast }}{p}}}}\text{.}
\end{eqnarray*}

On the other hand
\begin{align*}
2^{(M+1)}\left(\omega^{p^{\ast}}(\Omega)\right)^{1/p^*}&\leq 4C\left(\omega^{p^{\ast}}(\Omega)\right)^{1/p^*}\frac{C(\Omega )}{\left\vert B\right\vert }\int_{\Omega }\left\vert Xf\right\vert dz\\
&\leq4C\left(\omega^{p^{\ast}}(\Omega)\right)^{1/p^*}\frac{C(\Omega )}{\left\vert B\right\vert }\left(\omega^{-p'}(\Omega)\right)^{1/p'}\left(\int_{\Omega }\left\vert Xf\right\vert^p\omega(x)^{p} dz\right)^{\frac{1}{p}}
\end{align*}
Therefore, we get 
\begin{equation*}
\left( \int_{\Omega }\lvert f(x)-f_{\Omega }\rvert ^{p\ast }\omega(x)^{p\ast
}dx\right) ^{1/p\ast }\leq C\left( \int_{\Omega }\lvert Xf(x)\rvert
^{p}\omega (x)^{p}dx\right) ^{1/p},
\end{equation*}
where the constant $C$ depends of $\omega$, $\Omega$ and $p$.
\end{proof}

We can now prove the Theorem \ref{prin}

\begin{proof}[Proof of Theorem \ref{prin}]
Case 1: If $p_{-}>1$, let $B\subset \Omega $ be a ball, 
\begin{equation*}
\lVert f-f_{\Omega }\rVert _{L^{p\ast (\cdot )}_\omega(\Omega )}\leq \lVert
f-f_{B}\rVert_{L^{p\ast (\cdot )}_\omega(\Omega )}+\lVert f_{B}-f_{\Omega }\rVert
_{L^{p\ast (\cdot )}_\omega(\Omega )}.
\end{equation*}%
By Theorem \ref{HI} (H\"older's inequality)  
\begin{align*}
\lVert f_{B}-f_{\Omega }\rVert_{L^{p\ast (\cdot )}_\omega(\Omega )}&=\lvert
f_{B}-f_{\Omega }\rvert \lVert \omega \rVert_{L^{p\ast (\cdot )}(\Omega)}\\
&\leq \left( \dfrac{1}{\lvert \Omega \rvert }\int_{\Omega }\lvert
f_{B}-f\rvert \chi _{\Omega }\omega \omega ^{-1}dx\right) \lVert \omega \rVert_{L^{p\ast (\cdot )}(\Omega)}\\
&\leq \dfrac{1}{\lvert \Omega \rvert }\lVert f-f_{B}\rVert_{L^{p\ast
(\cdot )}_\omega(\Omega)}\lVert \omega^{-1} \rVert_{L^{(p\ast (\cdot ))'}(\Omega)}\lVert \omega \rVert_{L^{p\ast (\cdot )}(\Omega)}.
\end{align*}
Let $W(x)=\omega (x)^{p^{\ast }(x)}$, $W(\Omega)=\int_{\Omega}W(x)dx$ and if $\lambda =W(\Omega )+1$ then 
\begin{equation*}
\int_{\Omega }\lambda ^{p^{\ast }(x)}\omega(x) ^{p^{\ast }(x)}dx\leq \lambda
^{-p_{-}^{\ast }}W(\Omega )\leq \lambda ^{-1}(W(\Omega )+1)=1.
\end{equation*}%
Therefore $\lVert \omega \rVert _{L^{p\ast (\cdot )}(\Omega)}\leq
W(\Omega )+1$. Analogously for $\lVert \omega ^{-1}\rVert
_{L^{(p\ast (\cdot ))^{\prime }}(\Omega)}$. So,

\begin{equation*}
\dfrac{1}{\lvert \Omega \rvert} \lVert f-f_{B} \rVert_{L^{p*(\cdot)}_\omega(\Omega)}\lVert \omega^{-1} \rVert_{L^{(p\ast (\cdot ))'}(\Omega)}\lVert \omega \rVert_{L^{p\ast (\cdot )}(\Omega)} \leq C(\Omega, \omega) \lVert
f-f_{B} \rVert_{L^{p*(\cdot)}_\omega(\Omega)}.
\end{equation*}
Then, 
\begin{equation}  \label{ballnorm}
\lVert f-f_{\Omega}\rVert_{L^{p*(\cdot)}_\omega(\Omega)} \leq C(\Omega,\omega) \lVert
f-f_{B}\rVert_{L^{p*(\cdot)}_\omega(\Omega)}.
\end{equation}

Now, we consider $B_{0}$ the central ball in $\Omega$, by Theorem \ref{RepresentationFormulaFLW} with $\mu$ and $\nu$ the Lebesgue measures

\begin{equation}  \label{x}
\lvert f-f_{B_0} \rvert \leq C \int_{\Omega} \lvert Xf(y) \rvert \dfrac{%
\rho(x,y)}{\lvert B(x, \rho (x,y)) \rvert} dy \leq C I_1 (\lvert Xf \rvert)
(x).
\end{equation}

Finally by Theorem \ref{I_alpha_bounded-weight}, equation \eqref{ballnorm} and 
\eqref{x} we have

\begin{equation*}
\lVert f-f_{\Omega }\rVert_{L^{p*(\cdot)}_\omega(\Omega)}\leq C\lVert
I_{1}(Xf)\rVert_{L^{p*(\cdot)}_\omega(\Omega)}\leq C\lVert Xf\rVert
_{L^{p(\cdot)}_\omega(\Omega)}.
\end{equation*}

We observe that the same result is obtained by extrapolation techniquess
from the inequality of Lemma \ref{Poincaré_p=1}.

Case 2: If $p_{-}=1$ by the assumption $M$ bounded on $L^{\left( p\ast
(\cdot )/Q^{\prime }\right) ^{\prime }}(\omega ^{-Q^{\prime }})$. By Lemma %
\ref{Poincaré_p=1}, we have for $\omega _{0}\in A_{1,Q^{\prime }}$ 
\begin{equation*}
\left( \int_{\Omega }\lvert f(x)-f_{\Omega }\rvert ^{p\ast }\omega
_{0}(x)^{p\ast }dx\right) ^{1/p\ast }\leq C\left( \int_{\Omega }\lvert
Xf(x)\rvert ^{p}\omega _{0}(x)^{p}dx\right) ^{1/p},
\end{equation*}%
then we can applied Theorem \ref{extrapolation} and we obtain the desired
result.
\end{proof}

In \cite{LW-2} and \cite{LW-3} for Carnot group the authors establish
high-order Sobolev embedding theorems, proving high order representation
formulas for smooth functions. In fact, they use the existence and
properties of polynomials given in \cite{L1} and \cite{L2}, which only
require the existence of the distributional derivative. 

\begin{theorem}[\protect\cite{LW-3}]
\label{P} Let $\Omega $ be a weak Boman domain in a Carnot group $\mathbb{G}$
with central ball $B_{0}$, and let $f\in W_{loc}^{m,1}(\Omega )$. Let $Q$ be
the homogeneous dimension of $\mathbb{G}$. Then for any integers $0\leq
j<i\leq m$ with $i-j\leq Q$, there is a polynomial $P_{m}(B_{0},f)$ of
homogeneous order less than $m$ such that for $a.e.x\in \Omega $, 
\begin{equation*}
|X^{j}(f-P_{m}(B_{0},f))(x)|\leq C\int_{\Omega }|X^{i}(f)(y)|\frac{%
d(x,y)^{i-j}}{|B(x,d(x,y))|},
\end{equation*}%
where $C$ is independent of $f$.
\end{theorem}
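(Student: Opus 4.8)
The plan is to first establish a single-ball sub-representation for the polynomial-corrected function and then to propagate it across the weak Boman family by a telescoping chain argument, exactly in the spirit of the first-order formula recorded in Theorem \ref{RepresentationFormulaFLW}. For the local step, I would use the polynomials constructed by Lu in \cite{L1} and \cite{L2}: to each ball $B$ with a fixed dilate $cB\subset\Omega$ one associates a polynomial $P_B:=P_m(B,f)$ of homogeneous order less than $m$, characterized by the moment conditions $\int_B X^I(f-P_B)\,dy=0$ for every multi-index $I$ with $d(I)<m$. Starting from the first-order representation of Theorem \ref{RepresentationFormulaFLW} (specialized to a ball, with $\mu,\nu$ the Lebesgue measure) and iterating it $i-j$ times — each application expressing an $X^{\ell}(f-P_B)$ through a fractional integral of $X^{\ell+1}(f-P_B)$, with the vanishing averages of $P_B$ annihilating the polynomial part — I would obtain for a.e. $x\in B$
\begin{equation*}
|X^j(f-P_B)(x)|\le C\int_{cB}|X^i f(y)|\,\frac{d(x,y)^{i-j}}{|B(x,d(x,y))|}\,dy.
\end{equation*}
Here the order $i-j$ on the kernel arises because the $(i-j)$-fold composition $I_1\circ\cdots\circ I_1$ is comparable to $I_{i-j}$ on a homogeneous space, a standard kernel computation using the homogeneity of the sublaplacian's fundamental solution.

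To globalize, fix $x\in\Omega$, choose a ball $B\in\mathcal{F}$ with $x\in B$, and let $B_0$ be the central ball with a chain $\{B_\ell\}_{\ell=0}^{k}$, $B_k=B$, as in the weak Boman chain condition. Telescoping gives
\begin{equation*}
X^j(f-P_{B_0})(x)=X^j(f-P_{B})(x)+\sum_{\ell=0}^{k-1}X^j(P_{B_{\ell+1}}-P_{B_\ell})(x).
\end{equation*}
The leading term is controlled by the single-ball estimate on $B$. For each summand, set $Q_\ell:=P_{B_{\ell+1}}-P_{B_\ell}$, again a polynomial of homogeneous order less than $m$. Each consecutive pair shares a ball $D_\ell\in\mathcal{F}$ with $B_\ell\cup B_{\ell+1}\subset\Lambda D_\ell$, so $D_\ell$ has radius comparable to $r(B_\ell)$; since both $P_{B_\ell}$ and $P_{B_{\ell+1}}$ approximate $f$ in average on $D_\ell$ by the local estimate, the average $\tfrac{1}{|D_\ell|}\int_{D_\ell}|X^j Q_\ell|$ is dominated by local fractional integrals of $|X^i f|$ over $\Lambda D_\ell$.

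The crux is then to pass from this average to the point value at the far point $x$. Because $x\in\Lambda B_\ell$ for every $\ell$ (the condition $B\subset\Lambda B_\ell$ together with comparability of radii), and because $X^j Q_\ell$ is a polynomial of bounded homogeneous order living on a finite-dimensional space that is invariant under the group dilations, one has the norm-equivalence estimate
\begin{equation*}
\|X^j Q_\ell\|_{L^\infty(\Lambda B_\ell)}\le C\,\frac{1}{|D_\ell|}\int_{D_\ell}|X^j Q_\ell|,
\end{equation*}
with $C$ depending only on $\Lambda$, the degree and the structural constants. Combining the last two displays bounds each summand $|X^j Q_\ell(x)|$ by a local fractional integral of $|X^i f|$ over $\Lambda D_\ell$.

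Summing along the chain and using the bounded-overlap property $\sum_{B\in\mathcal{F}}\chi_{\sigma B}\le\Lambda\chi_\Omega$ of the Boman family, the sum $\sum_\ell\int_{\Lambda D_\ell}|X^i f(y)|\,d(x,y)^{i-j}/|B(x,d(x,y))|\,dy$ collapses to a single global fractional integral over $\Omega$, yielding the asserted bound with $P_m(B_0,f)=P_{B_0}$ and $C$ independent of $f$. I expect the main obstacle to be precisely the polynomial-comparison step: controlling $X^j(P_{B_{\ell+1}}-P_{B_\ell})$ at a point possibly far from $D_\ell$ requires the correct dilation-scaling (equivalence-of-norms) estimates for polynomials of homogeneous order less than $m$ on a Carnot group, and one must arrange the chain summation so that the local fractional integrals recombine into one global integral without a loss growing in the chain length $k$ — which is exactly where the geometry of the weak Boman condition (comparable radii, $B\subset\Lambda B_\ell$, and bounded overlap) must be used carefully.
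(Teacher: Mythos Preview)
The paper does not prove this theorem: it is quoted verbatim from \cite{LW-3} and used as a black box (together with Theorem \ref{reprG}) to derive Theorems \ref{Poincare} and \ref{prin2} by applying the weighted boundedness of $I_{i-j}$ from Theorem \ref{I_alpha_bounded-weight}. There is therefore no ``paper's own proof'' to compare against.

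That said, your sketch is precisely the strategy of \cite{LW-2,LW-3}: a local polynomial sub-representation on each ball of the Boman cover, a telescoping along the chain, a polynomial norm-equivalence to transfer averages over $D_\ell$ to pointwise values on $\Lambda B_\ell$, and bounded overlap to sum. Two points deserve care if you want this to stand as a proof. First, the local step is not a naive $(i-j)$-fold iteration of Theorem \ref{RepresentationFormulaFLW}; the averages $f_{B}$ there are constants, not polynomials of homogeneous order $<m$, so you must invoke directly the higher-order local representation from \cite{L1,L2,LW-2} (which already produces $P_B$ with the required moment conditions and the kernel $d(x,y)^{i-j}/|B(x,d(x,y))|$), rather than deriving it by composing $I_1$'s. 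Second, in the chain summation you need more than bounded overlap: the kernel evaluated at $y\in\Lambda D_\ell$ is comparable to $r(B_\ell)^{i-j}/|B_\ell|$, and the containment $B\subset\Lambda B_\ell$ together with $y\in\Lambda D_\ell\subset\Lambda^2 B_\ell$ gives $d(x,y)\lesssim r(B_\ell)$, so that each local integral is genuinely dominated by the corresponding piece of the global fractional integral; the geometric series (or bounded overlap) then closes the sum without a factor depending on $k$. You have flagged exactly this as the delicate point, which is correct.
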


\begin{theorem}[\protect\cite{LW-3}]
\label{reprG} Let $\Omega $ be a domain in a Carnot group $\mathbb{G}$, and
let $Q$ be the homogeneous dimension of $\mathbb{G}$. Suppose that $m$ and $j
$ are integers with $0\leq j<m$ and $m-j\leq Q$. If $f\in W_{0}^{m,1}(\Omega
)$, then for any $m<Q$ and for $a.e.x\in \mathbb{G}$, 
\begin{equation*}
\lvert X^{j}f(x)\rvert \leq C\int_{\Omega }\lvert X^{m}f(y)\rvert \dfrac{%
d(x,y)^{m-j}}{\lvert B(x,d(x,y))\rvert }dy.
\end{equation*}
\end{theorem}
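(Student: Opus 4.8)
The plan is to reduce Theorem~\ref{reprG} to the representation formula with polynomial, Theorem~\ref{P}, by extending $f$ by zero and then choosing the reference ball so far away from $\Omega$ that the approximating polynomial is forced to vanish.

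First I would extend $f$ by zero to all of $\mathbb{G}$. Since $W_0^{m,1}(\Omega)$ is, by definition, the closure of $C_0^\infty(\Omega)$ in $W^{m,1}(\Omega)$, and the zero extension of any $\varphi\in C_0^\infty(\Omega)$ lies in $C_0^\infty(\mathbb{G})$ with the same $W^{m,1}$-norm, the extended $f$ (still denoted $f$) belongs to $W^{m,1}(\mathbb{G})$, and $X^If=0$ a.e. on $\mathbb{G}\setminus\Omega$ whenever $d(I)\le m$, because each $X^If$ is an $L^1$-limit of functions supported in $\Omega$. Next I would fix $x_0\in\mathbb{G}\setminus\overline{\Omega}$, set $\delta=d(x_0,\overline{\Omega})>0$, and, for $R>\sup_{y\in\overline{\Omega}}d(x_0,y)$, put $\tilde B=B(x_0,R)$, so that $\overline{\Omega}\subset\tilde B$. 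The Carnot-Carath\'eodory ball $\tilde B$ is a weak Boman chain domain, and its central ball can be chosen to be a small concentric ball $B_0\subset\tilde B\setminus\overline{\Omega}$ (possible since $x_0\notin\overline{\Omega}$); in particular $f\equiv 0$ on $B_0$.

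Then I would apply Theorem~\ref{P} to $f\in W_{loc}^{m,1}(\tilde B)$ with $i=m$, so that the hypothesis $i-j\le Q$ is exactly $m-j\le Q$: there is a polynomial $P_m(B_0,f)$ of homogeneous order $<m$ with
\begin{equation*}
|X^j(f-P_m(B_0,f))(x)|\le C\int_{\tilde B}|X^mf(y)|\,\frac{d(x,y)^{m-j}}{|B(x,d(x,y))|}\,dy
\end{equation*}
for a.e. $x\in\tilde B$. The crucial point is that $P_m(B_0,f)\equiv 0$: the approximating polynomial produced by the construction of \cite{L1,L2} depends only on the jet of $f$ over $B_0$ --- equivalently, on the averages $|B_0|^{-1}\int_{B_0}X^If\,dy$ with $d(I)<m$ --- all of which vanish since $f\equiv 0$ on $B_0$, so the uniqueness inherent in that construction forces $P_m(B_0,f)=0$. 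Since $X^mf=0$ a.e. outside $\Omega$, the integral over $\tilde B$ coincides with the one over $\Omega$, which gives the asserted inequality for a.e. $x\in\tilde B$; letting $R\to\infty$ (the right-hand side being independent of $R$) yields it for a.e. $x\in\mathbb{G}$.

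I expect the vanishing of $P_m(B_0,f)$ to be the only delicate point, since it requires unwinding the explicit construction of the approximating polynomials in \cite{L1,L2}. Alternatively, one can bypass Theorem~\ref{P} by a density argument: for $g\in C_0^\infty(\mathbb{G})$, the fundamental solution $\Gamma$ of the sub-Laplacian $\mathcal{L}=-\sum_iX_i^2$ gives $g=\Gamma*\mathcal{L}g$, and iterating this --- together with the Folland--Stein calculus of homogeneous convolution kernels, to cover all parities of $m$ --- expresses $X^jg$ as a convolution of a linear combination of the $X^Ig$ with $d(I)=m$ against a kernel that is smooth off the origin and homogeneous of degree $(m-j)-Q$, hence controlled by the kernel of $I_{m-j}$; choosing $f_k\in C_0^\infty(\Omega)$ with $f_k\to f$ in $W^{m,1}$ and passing to the limit, using $X^jf_k\to X^jf$ in $L^1$ and a weak-type bound for $I_{m-j}$ on compact sets, completes the argument.
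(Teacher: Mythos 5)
Theorem \ref{reprG} is not proved in this paper at all: it is imported verbatim from Lu and Wheeden \cite{LW-3}, and the authors only use it (together with Theorem \ref{I_alpha_bounded-weight}) to deduce Theorem \ref{Poincare}. So there is no in-paper argument to compare yours against; what can be assessed is whether your reduction would stand on its own.

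Your route — extend $f\in W_0^{m,1}(\Omega)$ by zero, enclose $\Omega$ in a large ball $\tilde B=B(x_0,R)$ whose concentric central ball $B_0$ misses $\overline{\Omega}$, apply Theorem \ref{P} with $i=m$, and argue $P_m(B_0,f)=0$ — is a sensible reduction, and the zero-extension step is correct. Two points, however, need repair or justification. First, the heart of the matter, $P_m(B_0,f)\equiv 0$, is asserted rather than proved: it does hold, because Lu's projection polynomials from \cite{L1,L2} are determined by finitely many moments of $f$ over $B_0$ and depend linearly on $f\restriction_{B_0}$, but as written this is exactly the step that requires opening those references; you should also justify (or cite) that a Carnot--Carath\'eodory ball is a weak Boman chain domain with the concentric central ball you choose. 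Second, the closing step ``let $R\to\infty$, the right-hand side being independent of $R$'' is not legitimate as stated: the constant furnished by Theorem \ref{P} depends on the chain constants of the pair $(\tilde B,B_0)$, and with $B_0$ of fixed radius $<\delta$ these degenerate as $R\to\infty$ (by dilation the configuration rescales to a unit ball with central ball of radius comparable to $\delta/R$). The fix is that no limit is needed: $X^jf=0$ a.e.\ off $\Omega$, so the inequality is trivial outside $\Omega$ and one fixed $R$ comparable to $\operatorname{diam}\Omega+\delta$ suffices. Note, though, that this yields a constant depending on $\Omega$, which is weaker than the Lu--Wheeden statement (constant depending only on $\mathbb{G}$, $m$, $j$), although it is enough for the applications in this paper. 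Your alternative sketch via the fundamental solution of the sub-Laplacian, the Folland--Stein kernel calculus and density in $W^{m,1}$ is in fact closer in spirit to how such formulas for compactly supported functions are usually obtained, and the limiting argument there (a.e.\ convergence along subsequences plus a weak-type bound for $I_{m-j}$) is the right way to pass from $C_0^\infty(\Omega)$ to $W_0^{m,1}(\Omega)$.
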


The proof of the Theorem \ref{Poincare} and Theorem \ref{prin2} follows from the Theorem \ref{P} and Theorem \ref{reprG} respectively, applying the Theorem \ref{I_alpha_bounded-weight} with $\alpha=i-j$.

Finally we prove the Theorem \ref{Poincare2}

\begin{proof}[Proof of Theorem \ref{Poincare2}]
As $W^{1,p(\cdot)}_{\omega,0}(\Omega)=\overline{C_0^\infty(\Omega)}$ it is enough to consider $f\in C_0^\infty(\Omega)$. Since $\overline{\Omega} $ is compact, there exist $x_{1},...,x_{k}$ such that 
\begin{equation*}
\Omega =\cup _{i=1}^{k}B(x_{i},\delta ).
\end{equation*}%
We write $B_{i}=B(x_{i},\delta )$ and denote by $\chi _{i}$ the
characteristic function of $B_{i}$. Let $\tilde{f}$ the extension of $\tilde{%
f}$, i.e. $\tilde{f}(x)=0$ for $x\in \mathbb{G}-\Omega $. By Lemma \ref{propA} we get $\omega \in
A_{p^{-},(p^{-})^{\ast }}\Rightarrow \omega \in A_{p_{B_{i}}^{-},p_{B_{i}}^{\ast }}$,(remember that $p_{B_{i}}^{-}=\text{ess inf}_{x\in B_{i}}p(x)$). 
The weighted Poincar\'{e} inequality in the ball, Theorem \ref{prin},  imply that 
\begin{eqnarray*}
\lVert f\rVert _{L_{\omega }^{p(\cdot )}(\Omega )} &=&\lVert \tilde{f}\rVert
_{L_{\omega }^{p(\cdot )}(\mathbb{G})}\leq \sum_{i}\left\Vert \tilde{f}\chi
_{i}\right\Vert _{L_{\omega }^{p(\cdot )}(\mathbb{G})} \\
&\leq &C(\Omega)\sum_{i}\left\Vert \tilde{f}\right\Vert _{L_{\omega
}^{p_{B_{i}}^{\ast }}(B_{i})} \\
&\leq &C(\Omega)\sum_{i}\left( \left\Vert \tilde{f}-\tilde{f}%
_{B_{i}}\right\Vert_{L_{\omega }^{p_{B_{i}}^{\ast }}(B_{i})}+\left\vert 
\tilde{f}_{B_{i}}\right\vert \left\Vert \chi _{i}\right\Vert
_{L_{\omega }^{p_{B_{i}}^{\ast }}(B_{i})}\right) \\
&\leq &C(\Omega)\sum_{i}\left( \left\Vert X\tilde{f}~\right\Vert
_{L_{\omega }^{p_{B_{i}}^{-}}(B_{i})}+\left\vert \tilde{f}%
_{B_{i}}\right\vert \left\Vert \omega \right\Vert_{L^{p^{\ast }(\cdot
)}(\Omega )}\right) .
\end{eqnarray*}

For every $i=1,...,k$ the classical Poincar\'{e} inequality on $\mathbb{G}$, equation \eqref{poincare'}, implies that%
\begin{eqnarray*}
\left\vert \tilde{f}_{B_{i}}\right\vert &\leq &C_{\delta }\int_{\Omega
}\left\vert f\right\vert dx\leq C\int_{\Omega }\left\vert Xf\right\vert dx \\
&\leq &C\left\Vert Xf~\right\Vert _{L_{\omega }^{p(\cdot )}(\Omega
)}\left\Vert \omega ^{-1}\right\Vert _{L^{p^{\prime }(\cdot )}(\Omega ).}
\end{eqnarray*}

So, 
\begin{equation*}
\lVert f\rVert _{L_{\omega }^{p(\cdot )}(\Omega )}\leq C\left\Vert
Xf~\right\Vert _{L_{\omega }^{p(\cdot )}(\Omega )}
\end{equation*}

where $C$ depend of $\Omega $, $p$ and $\omega $.
\end{proof}

\section{Proof of the applications.}\label{sec4}

In this section we will prove our results with respect to Dirichlet problems, equation \eqref{1}, for the degenerate $p(\cdot)$-Laplacian given in \eqref{dege}.

\begin{proof}[Proof Theorem \ref{carac}]
First we prove that if $u\in W^{1,p(\cdot)}_{\omega,0}(\Omega)$ minimizer the energy functional $\Ff$ then
 \begin{align}\label{menos}
	&\int_{\Omega }\left\langle A(x)Xu(x),Xu(x)\right\rangle ^{\frac{p(x)-2}{2}}	\left\langle A(x)Xu(x),Xv(x)\right\rangle \,dx\\ \nonumber
	&\qquad\qquad \qquad+\int_{\Omega}|u(x)|^{p(x)-2}u(x)v(x)\omega(x)^{p(x)}\,dx  -\int_{\Omega }f(x)v(x)\,dx\geq 0, \nonumber
	\end{align}
	for every $v\in W_{\omega,0}^{1,p(\cdot )}(\Omega )$. We fix $v\in W_{\omega,0}^{1,p(\cdot )}(\Omega )$, then for every $t \in\RR$ we get
	\begin{equation*}
\mathcal{F}(u+tv)-\mathcal{F}(u)\geq 0.
\end{equation*}
and  if we consider $ 0<t<1$
	\begin{equation}\label{sol}
\int_{\Omega}\frac{\mathcal{F}(u+tv)(x)-\mathcal{F}(u)(x)}{t}\,dx\geq 0.
\end{equation}
Since 
 \begin{align}\label{deri}
&\lim_{t\to 0} \left(\frac{\left\langle A(x)X (u+tv)(x),X(u+tv)(x)\right\rangle^{\frac{p(x)}{2}}-\left\langle A(x)X u(x),Xu(x)\right\rangle^{\frac{p(x)}{2}}}{p(x)\,t}\right.\\ \nonumber
&\qquad\quad\left.+\frac{|(u+tv)(x)|^{p(x)} -|u(x)|^{p(x)} }{p(x)\,t}+\frac{f(x)u(x)-f(x)(u+tv)(x)}{t}\right)\\ \nonumber
&\qquad=\left\langle A(x)Xu(x),Xu(x)\right\rangle ^{\frac{p(x)-2}{2}}\left\langle A(x)Xu(x),Xv(x)\right\rangle\\ \nonumber
	&\quad\qquad+|u(x)|^{p(x)-2}u(x)v(x)\omega(x)^{p(x)}-f(x)v(x), \nonumber
\end{align}
for almost every $x\in\Omega$, the result follows from the Lebesgue dominated convergence theorem provided that, if $A(x)$ satisfy the hypothesis \eqref{A}, we find a $L^1$ majorant independent of $t$ for integrand in \eqref{deri}.
By the mean value theorem there exists $\tilde t\in(0,t)$ such that
\begin{align*}
&\frac{\left\langle A(x)X (u+tv)(x),X(u+tv)(x)\right\rangle^{\frac{p(x)}{2}}-\left\langle A(x)X u(x),Xu(x)\right\rangle^{\frac{p(x)}{2}}}{p(x)\,t}\\ 
&\qquad\quad+\frac{|(u+tv)(x)\omega(x)|^{p(x)} -|u(x)\omega(x)|^{p(x)} }{p(x)\,t}+\frac{f(x)u(x)-f(x)(u+tv)(x)}{t}\\ 
&\qquad=\left\langle A(x)Xu(x),Xu(x)\right\rangle ^{\frac{p(x)-2}{2}}\left\langle A(x)Xu(x),\tilde{t} Xv(x)\right\rangle\\ 
	&\quad\qquad+|u(x)|^{p(x)-2}u(x)\omega(x)^{p(x)}\tilde{t}v(x)-f(x)\tilde{t}v(x), 
\end{align*}
and thus
\begin{align*}
&\left|\frac{\left\langle A(x)X (u+tv)(x),X(u+tv)(x)\right\rangle^{\frac{p(x)}{2}}-\left\langle A(x)X u(x),Xu(x)\right\rangle^{\frac{p(x)}{2}}}{p(x)\,t}\right.\\ 
&\qquad\quad\left.+\frac{|(u+tv)(x)\omega(x)|^{p(x)} -|u(x)\omega(x)|^{p(x)} }{p(x)\,t}+\frac{f(x)u(x)-f(x)(u+tv)(x)}{t}\right|\\ 
&\qquad\leq\tilde{t}\left(\left\langle A(x)Xu(x),Xu(x)\right\rangle ^{\frac{p(x)-2}{2}}|\left\langle A(x)Xu(x), Xv(x)\right\rangle|\right.\\ 
	&\quad\qquad\left.+|u(x)|^{p(x)-2}\omega(x)^{p(x)}|u(x)v(x)|+|f(x)v(x)|\right)\\
&\qquad\leq  |A(x)Xu(x)|^{\frac{p(x)}{2}}|Xu(x)|^{\frac{p(x)-2}{2}}|Xv(x)|+|u(x)|^{p(x)-1}\omega(x)^{p(x)}|v(x)|+|f(x)v(x)|\\
&\qquad\leq  \eta_2^{p(x)/2}\omega(x)^{p(x)}|Xu(x)|^{\frac{p(x)}{2}}|Xu(x)|^{\frac{p(x)-2}{2}}|Xv(x)|+|u(x)|^{p(x)-1}\omega(x)^{p(x)}|v(x)|+|f(x)v(x)|\\
	&\qquad\leq C(A,p)\omega(x)^{p(x)}|Xu(x)|^{p(x)-1}|Xv(x)|+\omega(x)^{p(x)}|u(x)|^{p(x)-1}|v(x)|+|f(x)||v(x)|=g(x),
\end{align*}
where $C(A,p)=\sup\{\eta_2^{p(x)/2}:x\in\Omega\}$.

Observe that $|Xu|^{p(\cdot)-1}\omega^{p(\cdot)}\in L^{p'(\cdot)}_{\omega^{-1}}(\Omega)$, if we take $\lambda=(\|Xu\|_{L^{p(\cdot)}_\omega(\Omega)}+1)^{p^--1}$,
\begin{align*}
\rho_{p'(\cdot),\omega^{-1}}(|Xu|^{p(\cdot)-1}\omega^{p(\cdot)}/\lambda)&=\int_{\Omega} \frac{|Xu(x)|^{(p(x)-1)p'(x)}\omega(x)^{p(x)p'(x)}}{\lambda^{p'(x)}}\omega(x)^{-p'(x)}\,dx\\
&\leq \int_{\Omega} \frac{|Xu(x)|^{(p(x)-1)p'(x)}}{(\|Xu\|_{L^{p(\cdot)}_\omega(\Omega)}+1)^{(p(x)-1)p'(x)}}\omega(x)^{p(x)p'(x)-p'(x)}\,dx\\
&=\int_{\Omega} \frac{|Xu(x)|^{p(x)}}{(\|Xu\|_{L^{p(\cdot)}_\omega(\Omega)}+1)^{p(x)}}\omega(x)^{p(x)}\,dx<1,
\end{align*}
then $|Xu|^{p(\cdot)-1}\omega^{p(\cdot)}\in L^{p'(\cdot)}_{\omega^{-1}}(\Omega)$ and  by the Lemma \ref{HI} (H\"older's inequalities)
\begin{align*}
\int_{\Omega}\omega(x)^{p(x)}|Xu(x)|^{p(x)-1}|Xv(x)|\,dx&=\int_{\Omega}\omega(x)^{p(x)}|Xu(x)|^{p(x)-1}|Xv(x)|\omega(x)^{-1}\omega(x)\,dx\\
&\leq c\||Xu|^{p(\cdot)-1}\omega(x)^{p(\cdot)}\|_{L^{p'(\cdot)}_{\omega^{-1}}(\Omega)}\|Xv(x)\|_{L^{p(\cdot)}_{\omega}(\Omega)},
\end{align*}
This implies that $\omega(x)^{p(x)}|Xu(x)|^{p(x)-1}|Xv(x)|\in L^1(\Omega)$. The similar way we see that  $\omega(x)^{p(x)}|u(x)|^{p(x)-1}|v(x)|\in L^1(\Omega)$ and $fv \in L^1(\Omega)$.
Therefore $g\in L^1(\Omega)$ is the desired majorant. 

Now, we prove that if $u \in W^{1,p(\cdot)}_{\omega,0}(\Omega)$ satisfies \eqref{menos} then $u$ minimizer the energy functional $\Ff$. We define
$$
T(v)=\frac{\left\langle A(x)X v(x),Xv(x)\right\rangle^{\frac{p(x)}{2}}}{p(x)}+\frac{|v(x)\omega(x)|^{p(x)}}{p(x)}-f(x)v(x),
$$
then $T$ is a strictly convex functional and
$$
T(v_2+t(v_1-v_2))<(1-t)T(v_2)+tT(v_1),
$$
for $0<t<1$, we take $v_2=u$ and $v_1-v_2=v$ with $v \in W^{1,p(\cdot)}_{\omega,0}(\Omega)$,
\begin{align*}
T(u+tv)-T(u)&<t\left(T(v+u)-T(u)\right)\\
\frac{T(u+tv)-T(u)}{t}&<T(v+u)-T(u).
\end{align*}
Letting $t\to 0$ this yields by \eqref{deri}
\begin{align*}
T(v+u)-T(u)&>\left\langle A(x)Xu(x),Xu(x)\right\rangle ^{\frac{p(x)-2}{2}}\left\langle A(x)Xu(x),Xv(x)\right\rangle\\ \nonumber
	&\quad\qquad+|u(x)|^{p(x)-2}u(x)v(x)\omega(x)^{p(x)}-f(x)v(x).
\end{align*}
By \eqref{menos} and integrating in $\Omega$ we obtain that $\Ff(v+u)\geq \Ff(u)$ for all $v \in W^{1,p(\cdot)}_{\omega,0}(\Omega)$, i.e. $u$ minimizer the energy functional $\Ff$.
\end{proof}

\begin{remark}\label{existencia}
If we will consider $-1<t<0$ in \eqref{sol} and \eqref{deri} we get that if  $u$ minimizer the energy functional $\Ff$ then $u$ is a weak solution of the problem \eqref{1}.
 \end{remark}

We will use variational methods to prove the Theorem \ref{ext-unc}, we need the next classical result  
\begin{theorem}[Corollary 1.2.5, \protect\cite{AA}]\label{ambro-arco}
If $\mathcal{B}$ a reflexive Banach space, $A$ is a weakly closed subset in $%
\mathcal{B}$ and $\mathcal{F}:A\to\mathbb{R}$ is a weakly lower
semicontinuous, coercive functional in $A$, then there exists $u\in A$ such
that 
\begin{equation*}
\mathcal{F}(u)=\min_{v\in A}\mathcal{F}(v).
\end{equation*}
\end{theorem}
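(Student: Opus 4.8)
The plan is to prove the statement by the \emph{direct method of the calculus of variations}, which in this abstract setting reduces to the standard recipe: extract a weakly convergent minimizing sequence using coercivity together with reflexivity, then pass to the limit using weak lower semicontinuity. Throughout I assume $A\neq\emptyset$ (otherwise no minimizer can exist and the statement is vacuous), and I read the coercivity hypothesis as $\mathcal{F}(v)\to+\infty$ whenever $\|v\|_{\mathcal{B}}\to\infty$ with $v\in A$.

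First I would set $m=\inf_{v\in A}\mathcal{F}(v)$; since $\mathcal{F}$ takes finite real values on the nonempty set $A$, we have $m<+\infty$, while a priori $m\geq-\infty$. Choose a minimizing sequence $\{v_n\}\subset A$ with $\mathcal{F}(v_n)\to m$. The next step is to show that $\{v_n\}$ is bounded in $\mathcal{B}$: if it were not, then after passing to a subsequence we would have $\|v_n\|_{\mathcal{B}}\to\infty$, and coercivity would force $\mathcal{F}(v_n)\to+\infty$, contradicting $\mathcal{F}(v_n)\to m<+\infty$. Hence $\sup_n\|v_n\|_{\mathcal{B}}<\infty$.

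Now I would invoke reflexivity. Since $\mathcal{B}$ is reflexive, bounded sequences are weakly sequentially precompact (Kakutani's theorem together with the Eberlein--\v{S}mulian theorem), so there is a subsequence $\{v_{n_k}\}$ and an element $u\in\mathcal{B}$ with $v_{n_k}\rightharpoonup u$ weakly in $\mathcal{B}$. Because $A$ is weakly closed and $v_{n_k}\in A$, the weak limit satisfies $u\in A$. Finally, weak lower semicontinuity of $\mathcal{F}$ gives $\mathcal{F}(u)\leq\liminf_{k\to\infty}\mathcal{F}(v_{n_k})=m$; combined with $\mathcal{F}(u)\geq m$ (as $u\in A$) this yields $\mathcal{F}(u)=m$. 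In particular $m>-\infty$ is finite and is attained at $u$, which is the desired minimizer.

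The main obstacle, and the only place where the hypotheses are essential rather than formal, is the compactness step of producing a weak limit of the minimizing sequence: this is exactly where reflexivity of $\mathcal{B}$ is used, via the fact that closed bounded sets in a reflexive Banach space are weakly sequentially compact. Weak closedness of $A$ is then what guarantees that the limit stays in the admissible set, and weak lower semicontinuity is what prevents the value of $\mathcal{F}$ from jumping upward in the limit. No further estimate is needed, so the argument is purely functional-analytic.
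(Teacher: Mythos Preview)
Your argument is correct and is exactly the standard direct-method proof one finds in textbooks. Note, however, that the paper does not supply its own proof of this statement: it is quoted as Corollary~1.2.5 from \cite{AA} and used as a black box in the proof of Theorem~\ref{ext-unc}. So there is no paper-proof to compare against; your write-up simply fills in what the paper delegates to the reference.
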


\begin{proof}[Proof Theorem \ref{ext-unc}]
Let us first observe that for the Remark \ref{existencia} and since the functional $\Ff$ is strictly convex if $u\in W^{1,p(\cdot)}_{\omega,0}(\Omega)$ minimizer the energy functional $\Ff$ then
$u$ is a unique weak solution of the problem \eqref{1}.

The space $W^{1,p(\cdot)}_{\omega,0}(\Omega)$ is a reflexive Banach space,  by Theorem \ref{ambro-arco} we have to prove that $\mathcal{F}:\,W^{1,p(\cdot)}_{\omega,0}(\Omega)\to\mathbb{R}$ is a weakly lower semicontinuous and coercive functional in $W^{1,p(\cdot)}_{\omega,0}(\Omega)$.

First we see that $\mathcal{F}$ is a coercive functional, let $u_{n}\in W_{\omega,0}^{1,p(\cdot )}(\Omega )$ such that 
$\Vert u_{n}\Vert _{W_{\omega,0}^{1,p(\cdot )}(\Omega )}\rightarrow \infty $. By Theorem \ref{HI} (H\"{o}lder inequality) and the hypothesis \eqref{A} we have 
\begin{align*}
\mathcal{F}(u_{n})& =\int_{\Omega }\frac{\left\langle A(x)Xu_{n}(x),Xu_{n}(x)\right\rangle ^{\frac{p(x)}{2}}}{p(x)}\,dx+\int_{\Omega }\frac{|u_{n}(x)\omega(x)|^{p(x)}}{p(x)}\,dx-\int_{\Omega
}f(x)u_{n}(x)\,dx \\
& \geq \frac{C(A,p)}{p^{+}}\int_{\Omega
}|Xu_{n}|^{p(x)}\omega(x)^{p(x)}\,dx-c\Vert f\Vert _{L^{p'(\cdot )}_{\omega }(\Omega)}\Vert Xu_{n}\Vert _{L_{\omega }^{p(\cdot )}(\Omega)},
\end{align*}%
where $C(A,p)=\inf\{\eta_1^{p(x)/2}:x\in\Omega\}$.

By Theorem \ref{Poincare} and \ref{Poincare2}, (Poincar\'{e} inequality),  $\Vert u_{n}\Vert _{W_{\omega ,0}^{1,p(\cdot )}(\Omega )}\leq C \Vert Xu_{n}\Vert _{L_{\omega }^{p(\cdot )}(\Omega)}$.
As $\Vert u_{n}\Vert _{W_{\omega ,0}^{1,p(\cdot )}(\Omega )}\rightarrow \infty $ then $\Vert Xu_{n}\Vert _{L_{\omega}^{p(\cdot )}(\Omega)}\rightarrow \infty $ and $\Vert Xu_{n}\Vert _{L_{\omega
}^{p(\cdot )}}>1$ from $n$ large enough, by Lemma \eqref{norma-modular} 
\begin{equation*}
\int_{\Omega }|Xu_{n}|^{p(x)}\omega (x)^{p(x)}\,dx\geq \Vert Xu_{n}\Vert_{L_{\omega }^{p(\cdot )}(\Omega)}^{p^{-}_\Omega}
\end{equation*}%
and 
\begin{equation*}
\mathcal{F}(u_{n})\geq \frac{C(A,p)}{p^+}\Vert Xu_{n}\Vert_{L_{\omega }^{p(\cdot )}(\Omega)}^{p^{-}_\Omega}-c\Vert f\Vert _{L_{\omega }^{p^{\prime }(\cdot
)}}\Vert Xu_{n}\Vert _{L_{\omega }^{p(\cdot )}},
\end{equation*}%
since $p^-_\Omega>1$ we have $\mathcal{F}(u_{n})\rightarrow \infty $ when $\Vert
u_{n}\Vert _{W_{\omega,0}^{1,p(\cdot )}(\Omega )}\rightarrow \infty $.

Now, we must prove that $\mathcal{F}$ is weakly lower semicontinuous
functional. Let $u_{n}\in W_{\omega ,0}^{1,p(\cdot )}(\Omega )$ such that $%
u_{n}\rightharpoonup u$, as $|t|^{p(x)}$ is convex in $t$ for $p(x)>1$ we
have, 
\begin{equation*}
|u_{n}|^{p(x)}>|u(x)|^{p(x)}+p(x)|u|^{p(x)-2}u(u_{n}-u),
\end{equation*}%
similary, as $h:\mathbb{R}^{n_{1}}\rightarrow \mathbb{R}$ define by $h(\xi
)=\left\langle A(x)\xi ,\xi \right\rangle ^{{\frac{p(x)}{2}}}$ is convex in $\xi$ and  we have 
\begin{align*}
& \left\langle A(x)Xu_{n}(x),Xu_{n}(x)\right\rangle ^{{\frac{p(x)}{2}}%
}>\left\langle A(x)Xu(x),Xu(x)\right\rangle ^{{\frac{p(x)}{2}}} \\
& \qquad \qquad +{p(x)}\left\langle \left\langle
A(x)Xu(x),Xu(x)\right\rangle ^{{\frac{p(x)-2}{2}}}A(x)Xu(x),Xu_{n}(x)-Xu%
\right\rangle 
\end{align*}%
Then 
\begin{align}
\mathcal{F}(u_{n})& =\int_{\Omega }\frac{\left\langle
A(x)Xu_{n}(x),Xu_{n}(x)\right\rangle ^{\frac{p(x)}{2}}}{p(x)}%
\,dx+\int_{\Omega }\frac{|u_{n}(x)\omega(x)|^{p(x)}}{p(x)}\,dx-\int_{\Omega
}f(x)u_{n}(x)\,dx  \label{wlsc} \\
& \geq \int_{\Omega }\frac{\left\langle A(x)Xu(x),Xu(x)\right\rangle ^{\frac{%
p(x)}{2}}}{p(x)}\,dx+\int_{\Omega }\frac{|u(x)\omega(x)|^{p(x)}}{p(x)}%
\,dx-\int_{\Omega }f(x)u(x)\,dx  \notag \\
& \quad +\int_{\Omega }\left\langle \left\langle
A(x)Xu(x),Xu(x)\right\rangle ^{{\frac{p(x)-2}{2}}}A(x)Xu(x),Xu_{n}(x)-Xu%
\right\rangle \,dx  \notag \\
& \quad +\int_{\Omega }\omega(x)^{p(x)}|u|^{p(x)-2}u(u_{n}-u)\,dx-\int_{\Omega }f(x)(u_{n}(x)-u(x))\,dx.  \notag
\end{align}%
By hypothesis $f\in L_{\omega ^{-1}}^{p^{\prime }(\cdot )}(\Omega )$, observe that $\omega^{p(\cdot)}|u|^{p(\cdot)-2}u\in L_{\omega ^{-1}}^{p^{\prime }(\cdot )}(\Omega )$ and $\left\langle
AXu,Xu\right\rangle^{{\frac{p(\cdot)-2}{2}}}|AXu|\in L_{\omega^{-1}}^{p^{\prime }(\cdot )}(\Omega )$. Let $\lambda =(\|Xu\|_{L^{p(\cdot)}_\omega(\Omega)}+1)^{p^{-}-1}$, by the hypothesis \eqref{A} we get 
\begin{align*}
&\rho_{p'(\cdot),\omega^{-1}}(\left\langle AXu,Xu\right\rangle ^{{\frac{p(\cdot)-2}{2}}}\left\vert AXu\right\vert/\lambda)\\
&\qquad \qquad = \int_{\Omega }\lambda ^{-p^{\prime }(x)} \left(\left\langle
A(x)Xu(x),Xu(x)\right\rangle ^{{\frac{p(x)-2}{2}}}\left\vert A(x)Xu(x)\right\vert
\right)^{p^{\prime }(x)}\omega ^{-p^{\prime }(x)}dx \\
& \qquad \qquad \leq \int_{\Omega }\frac{\left( (\eta _{2}\omega^{2}|Xu|^{2})^{{\frac{p(x)-2}{2}}}\eta _{2}\omega^{2}|Xu|\right) ^{p^{\prime }(x)}}
{(\|Xu\|_{L^{p(\cdot)}_\omega(\Omega)}+1)^{(p^{-}-1)p^{\prime }(x)}}\omega^{-p^{\prime}(x)}dx \\
& \qquad \qquad \leq \int_{\Omega }\eta_2^{p(x)p'(x)}\frac{\left\vert Xu\right\vert ^{(p(x)-1)p^{\prime }(x)}}
{(\|Xu\|_{L^{p(\cdot)}_\omega(\Omega)}+1)^{p(x)}}\omega^{p(x)p^{\prime}(x)-p^{\prime }(x)}dx \\
& \qquad \qquad \leq C(A,p)\int_{\Omega }\frac{\left\vert Xu\right\vert ^{p(x)(x)} \omega^{p(x)}}
{(\|Xu\|_{L^{p(\cdot)}_\omega(\Omega)}+1)^{p(x)}}dx <C(A,p),
\end{align*}%
where $C(A,p)=\sup\{\eta_2^{p(x)p'(x)}:x\in\Omega\}$. Therefore $\left\langle AXu,Xu\right\rangle ^{{\frac{p(\cdot)-2}{2}}%
}|AXu|\in L_{\omega ^{-1}}^{p^{\prime }(\cdot )}(\Omega ).$

Now, we take limit on \eqref{wlsc}, as $u_n \rightharpoonup u$, we have 
\begin{align*}
\mathop{\underline{\lim}}_{n \to \infty}\mathcal{F}(u_n)&\geq \int_{\Omega} 
\frac{\left\langle A(x)X u(x),X u(x)\right\rangle^{\frac{p(x)}{2}}}{p(x)}
\,dx+\int_\Omega \frac{|u(x)\omega(x)|^{p(x)}}{p(x)}\,dx-\int_\Omega f(x)u(x)\,dx \\
&\quad+ \mathop{\underline{\lim}}_{n \to
\infty}\int_{\Omega}\left\langle\left\langle A(x)X u(x),X
u(x)\right\rangle^{ {\frac{p(x)-2}{2}}} A(x)X u(x),X
u_n(x)-Xu\right\rangle\,dx \\
&\quad +\mathop{\underline{\lim}}_{n \to \infty}\int_{\Omega} \omega(x)^{p(x)}|u|^{p(x)-2}u(u_n-u)\,dx-\mathop{\underline{\lim}}_{n \to
\infty}\int_\Omega f(x)(u_n(x)-u(x))\,dx \\
&=\quad\int_{\Omega}\frac{\left\langle A(x)X u(x),X u(x)\right\rangle^{\frac{
p(x)}{2}}}{p(x)}\,dx+\int_\Omega \frac{|u(x)\omega(x)|^{p(x)}}{p(x)}\,dx-\int_\Omega
f(x)u(x)\,dx \\
&=\mathcal{F}(u),
\end{align*}
as we wanted to show.
\end{proof}

\textbf{Acknowledgement.} We want to thank to Dr. Uriel Kaufmann, for his generosity and knowledge given to the group of Analysis and Differential Equations of the
FaMAF, Universidad Nacional de C\'ordoba.


\begin{thebibliography}{C-UFN}
\bibitem[AA]{AA} A. Ambrosetti and D. Arcoya \'Alvarez. \textit{An
introduction to nonlinear functional analysis and elliptic problems.} Vol.
82. Springer Science \& Business Media, (2011).

\bibitem[BF]{BF} T. Bieske and  R. D.  Freeman. \textit{A ${p}(\cdot)$-Poincar\'e-type inequality for variable exponent Sobolev spaces with zero boundary values in Carnot groups.} Analysis and Mathematical Physics, 8(2), (2018), 289-308.

\bibitem[CDG]{CDG} L. Capogna, D. Danielli, and N. Garofalo.  \textit{Subelliptic mollifiers and a basic pointwise estimate of Poincar\'e type.} Mathematische Zeitschrift, 226(1), (1997), 147-154.

\bibitem[CUC]{CUC} {D. Cruz Uribe, and J. Cummings. \textit{Weighted norm
inequalities for the maximal operator on $L^{p(\cdot)}$ over spaces of
homogeneous type.}, Preprint. arXiv:2007.10864vl}

\bibitem[CF]{CF} Cruz-Uribe, D. V., and Fiorenza, A.  \textit{Variable Lebesgue spaces: Foundations and harmonic analysis.} Springer Science \& Business Media. (2013).

\bibitem[C-UFN]{C-UFN} {D. Cruz-Uribe, A .Fiorenza and C. J. Neugebauer. 
\textit{Weighted norm inequalities for the maximal operator on variable
Lebesgue spaces}. Journal of Mathematical Analysis and Applications, 394(2),
744--760, (2012).}

\bibitem[C-W]{C-W} {D. Cruz-Uribe, L. D. Wang \textit{Extrapolation and
weighted norm inequalities in the variable Lebesgue spaces}. Preprint.
arXiv: 1408.4499vl}

\bibitem[DGP]{DGP} D. Danielli, N. Garofalo, and N. C. Phuc. \textit{Inequalities of Hardy–Sobolev Type in Carnot-Carath\'eodory
spaces.} In: Mazya, V. (ed.) Sobolev Spaces in Mathematics I, International Mathematical Series. Springer, Berlin (2009).

\bibitem[ER]{ER} Edmunds, D. E. and R\'akosn\'ik, J.  \textit{Density of smooth functions in $W^{k,p(\cdot)}(\Omega)$.} Proceedings of the Royal Society of London. Series A: Mathematical and Physical Sciences, 437(1899), (1992), 229-236.

\bibitem[ER-1]{ER-1} Edmunds, D. E. and R\'akosn\'ik, J.  \textit{Sobolev embeddings with variable exponent.} Studia Mathematica, 3(143), (2000), 267-293.


\bibitem[FKS]{FKS} Fabes, E. B., Kenig, C. E., and Serapioni, R. P. \textit{The local regularity of solutions of degenerate elliptic equations.} Communications in Statistics-Theory and Methods, 7(1), (1982), 77-116.

\bibitem[FP]{FP} {C. Fefferman, and D. H. Phong. \textit{Subelliptic
eigenvalue problems.}, In Conference on harmonic analysis in honor of Antoni
Zygmund (1981): Vol. 1, 590--606).}

\bibitem[FS]{FS} {G. B. Folland and Elias M. Stein. \emph{Hardy Spaces on
Homogeneous Groups}, Princeton University Press, (1982).}

\bibitem[FGW]{FGW} {Franchi B., Gutierrez C. E., Wheeden R. L.: \textit{Weighted sobolev-poincaré inequalities for grushin type operators.} Communications in Partial Differential Equations, 19, 523-60 (1994).}

\bibitem[FLW]{FLW} {Franchi B., Lu G., Wheeden R. L.: \textit{Representation formulas and weighted Poincar\'e inequalities for H\"ormander vector fields.} Ann. Inst. Fourier (Grenoble), 45, 577-604 (1995).}

\bibitem[HHKV]{HHKV} Harjulehto, P., H\"ast\"o, P., Koskenoja, M., and Varonen, S. \textit{Sobolev capacity on the space $W^{1,p(\cdot)}(\RR^n)$.} Journal of Function Spaces and Applications, 1(1), (2003), 17-33.

\bibitem[HHKV-1]{HHKV-1} Harjulehto, P., H\"ast\"o, P., Koskenoja, M., and Varonen, S.  \textit{The Dirichlet energy integral and variable exponent Sobolev spaces with zero boundary values.} Potential Analysis, 25(3), (2006), 205-222.

\bibitem[H]{H} {L. H\"ormander. \textit{Hypoelliptic second order
differential equations.} Acta Mathematica 119.1 (1967): 147--171.}

\bibitem[J]{J} D. Jerison. \textit{The Poincar\'e inequality for vector fields satisfying H\"ormander's condition.} Duke Mathematical Journal, 53(2), (1986), 503--523.

\bibitem[NSW]{NSW} {A. Nagel, E. M. Stein and S. Wainger. \textit{Balls and
metrics defined by vector fields I: Basic properties.} Acta Mathematica 155
(1985): 103--147.}

\bibitem[S]{S} {A. S\'anchez Calle: \textit{Fundamental solutions and
geometry of the sum of squares of vector fields.} Inventiones mathematicae
78.1 (1984): 143--160.}

\bibitem[SW]{SW} E. Sawyer and R. L. Wheeden.  \textit{Weighted inequalities for fractional integrals on Euclidean and homogeneous spaces.} American Journal of Mathematics, 114(4), (1992), 813-874.

\bibitem[LLT]{LLT} X. Li, G. Lu and H. L. Tang. \textit{Poincar\'e and Sobolev inequalities for vector fields satisfying H\"ormander's condition in variable exponent Sobolev spaces.} Acta Mathematica Sinica, English Series, 31(7), (2015), 1067-1085.


\bibitem[L]{L} G. Lu.  \textit{Weighted Poincar\'e and Sobolev inequalities for vector fields satisfying H\"ormander's condition and applications}. Revista Matematica Iberoamericana, 8(3),(1992), 367-439.

\bibitem[L1]{L1} G. Lu. \textit{Local and global interpolation inequalities on the Folland-Stein Sobolev spaces and polynomials on stratified groups.} Mathematical Research Letters. 4,  (1997), 777-790. DOIT:10.4310/MRL.1997.v4.n6.a1. 

\bibitem[L2]{L2}  G. Lu, \textit{Polynomials, higher order Sobolev extension theorems and interpolation inequalities on weighted Folland-Stein spaces on stratified groups.} Acta Mathematica Sinica, 16(3),  (2000), 405-444. 

\bibitem[LW]{LW} {G. Lu and R. L. Wheeden  \textit{An optimal representation
formula for Carnot-Carath\'eodory vector fields.} Bull. London
Math. Soc, 30, (1998), 578-584.}

\bibitem[LW-2]{LW-2} {G. Lu and R. L. Wheeden \textit{High order representation
formulas and embedding theorems on stratified groups and generations.}
Studia Math., 142, (2000), 101-133.}

\bibitem[LW-3]{LW-3} {G. Lu and R. L. Wheeden  \textit{Simultaneous
representation and approximation formulas and high-order Sobolev embedding
theorems on stratified groups.} Constr. Approx., 20, (2004), 647-668.}
\end{thebibliography}
\end{document}